\newcommand{\bA}{{\mathbb{A}}}
\newcommand{\bB}{{\mathbb{B}}}
\newcommand{\bG}{{\mathbb{G}}}
\newcommand{\bH}{{\mathbb{H}}}
\newcommand{\bP}{{\mathbb{P}}}
\newcommand{\bt}{{{\bf t}}}
\newcommand{\CC}{{\mathbb{C}}}
\newcommand{\PP}{{\mathbb{P}}}
\newcommand{\RR}{{\mathbb{R}}}
\newcommand{\ZZ}{{\mathbb{Z}}}
\newcommand{\calD}{{\mathcal{D}}}
\newcommand{\cE}{{\mathcal{E}}}
\newcommand{\cF}{{\mathcal{F}}}
\newcommand{\cG}{{\mathcal{G}}}
\newcommand{\cM}{{\mathcal{M}}}
\newcommand{\cP}{{\mathcal{P}}}
\newcommand{\cQ}{{\mathcal{Q}}}
\newcommand{\cU}{{\mathcal{U}}}
\newcommand{\cV}{{\mathcal{V}}}
\newcommand{\cX}{{\mathcal{X}}}
\newcommand{\cZ}{{\mathcal{Z}}}
\newcommand{\fM}{{\mathfrak{M}}}
\newcommand{\cO}{{\mathcal{O}}}
\newcommand{\fZrel}{{\mathfrak{Z}^{\rm rel}}}
\newcommand{\Spec}{{\rm Spec}}
\newcommand{\z}[2]{{z_{{#1}{#2}}}}
\newcommand{\w}[2]{{w_{{#1}{#2}}}}
\newcommand{\m}[2]{{\mu_{#1}^{(#2)}}}
\newcommand{\f}[2]{{{#1}_{#2}}}
\newcommand{\fn}[2]{{\tilde{#1}_{#2}}}
\newcommand{\fnl}[4]{{#1}_{#2, #3}^{(#4)}}
\theoremstyle{plain}
\newtheorem{theorem}{Theorem}[section]
\newtheorem{proposition}[theorem]{Proposition}
\newtheorem{corollary}[theorem]{Corollary}
\newtheorem{lemma}[theorem]{Lemma}
\newtheorem{claim}[theorem]{Claim}
\theoremstyle{definition}
\newtheorem{definition}[theorem]{Definition}
\newtheorem{example}[theorem]{Example}
\theoremstyle{remark}
\newtheorem{remark}[theorem]{Remark}
\author{Nobuyoshi Takahashi}
\address{Nobuyoshi Takahashi\\
Department of Mathematics\\
Graduate School of Science\\
Hiroshima University\\
1-3-1 Kagamiyama\\
Higashi-Hiroshima\\
739-8526 Japan}
\email{tkhsnbys@hiroshima-u.ac.jp}
\title{On the multiplicity of reducible relative stable morphisms}
\thanks{Partially supported by JSPS Grant-in-Aid for Scientific Research, No. 23540050}
\subjclass[2010]{14N35 (primary), 14D20 (secondary)}
\keywords{Enumerative geometry, Relative Gromov-Witten invariants, Plane curves.}
\begin{document}
\maketitle
\begin{abstract}
Let $(Z, D)$ be a pair of 
a smooth surface and a smooth anti-canonical divisor. 
Denote by $\fM_\beta$ 
the moduli stack of 
genus $0$ relative stable morphisms of class $\beta$ 
with full tangency to the boundary. 
Let $C_1$ and $C_2$ be rational curves 
fully tangent to $D$ at the same point $P$ and 
assume that $C_1$ and $C_2$ are immersed 
and that $(C_1.C_2)_P=\min\{D.C_1, D.C_2\}$. 
Then we show that the contribution of $C_1\cup C_2$ 
to the virtual count of $\fM_{[C_1]+[C_2]}$ 
is $\min\{D.C_1, D.C_2\}$. 

As an example, 
we describe genus $0$ relative stable morphisms 
to $(\PP^2, (\hbox{cubic}))$ of degree $4$ with full tangency, 
and examine how they contribute to 
the relative Gromov-Witten invariant. 
\end{abstract}

\section{Introduction}

In most of recent study of enumerative geometry, 
one mainly deals with invariants 
defined from certain compactified moduli spaces 
rather than naive counting invariants, 
since the former is better-behaved in theoretical frameworks. 
A typical example is that of Gromov-Witten invariants, 
which is defined from moduli spaces of stable morphisms. 

To go from the naive counting to the moduli theoretic invariants, 
or in the other direction, 
one has to calculate the contribution from the degenerate objects. 
In the case of Gromov-Witten invariants of a Calabi-Yau $3$-fold $X$, 
multiple covers must be taken into account. 
For example, 
if $C$ is an infinitesimally rigid smooth rational curve in $X$, 
then the $d$-fold covers of $C$ contribute $1/d^3$ 
to the genus $0$ Gromov-Witten invariant of class $d[C]$ (\cite{AM}, \cite{M}). 

There is a variant of the Gromov-Witten invariant, 
called the relative Gromov-Witten invariant(\cite{IP}, \cite{LR}, \cite{L2}), 
which corresponds to enumeration of curves on an open variety, 
or curves on a projective variety 
satisfying certain tangency conditions to a `boundary' divisor. 
An interesting case is that of a log Calabi-Yau surface, 
i.e. a pair $(Z, D)$ of a smooth projective surface $Z$ 
and a normal crossing divisor $D$ such that $K_Z+D\sim 0$. 
In this case, the first type of curves to consider are 
those curves $C$ such that the normalization of $C\setminus D$ 
is isomorphic to $\bA^1$. 
Let us call them rational curves with full tangency to $D$, 
and the corresponding moduli theoretic invariant 
the genus $0$, full tangency relative Gromov-Witten invariant. 
The expected dimension of the moduli space of such curves is $0$, 
and so one would like to know what the relative Gromov-Witten invariants are. 
In the case of $(\bP^2, (\hbox{cubic}))$, 
all the genus $0$, full tangency 
relative Gromov-Witten invariants were calculated in \cite{G}. 
In \cite{GPS}, 
the relative Gromov-Witten invariants of 
blown-up toric surfaces 
were related to the tropical vertex. 

Concerning the relation with the naive counting, 
the multiple cover formula 
was worked out in \cite[Proposition 6.1]{GPS}. 
\begin{proposition}(\cite[Proposition 6.1]{GPS})
Let $C$ be an immersed rational curve with full tangency to $D$ 
and write $w=(C.D)$. 
Then the contribution of $d$-fold covers of $C$ 
to the genus $0$, full tangency relative Gromov-Witten invariant 
of class $d[C]$ is 
\[
M_w[d]=\frac{1}{d^2}
\left(
\begin{array}{c}
d(w-1)-1 \\
d-1
\end{array}
\right). 
\]
\end{proposition}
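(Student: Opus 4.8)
The plan is to realize $M_w[d]$ as the integral of the Euler class of an obstruction bundle over the moduli stack of $d$-fold covers of $C$, and then to evaluate that integral by $\CC^*$-localization. First I would isolate, inside $\fM_{d[C]}$, the closed substack $\overline{M}_d$ of relative stable morphisms whose image is $C$. Because $C$ is immersed, any such morphism factors uniquely through the normalization $f_0\colon\PP^1\to C$, so it is a genus-$0$ degree-$d$ relative stable cover of $f_0$ --- possibly with contracted components and with the target expanded along $D$ over the tangency point $P$ --- and $\overline{M}_d$ is naturally identified with the moduli space $\overline{M}_{0,1,(d)}(\PP^1/\{\infty\},d)$ of genus-$0$, degree-$d$, full-tangency relative stable morphisms to $(\PP^1,\{\infty\})$. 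Its interior is the space of degree-$d$ polynomial maps modulo affine reparametrization, a smooth stack of dimension $d-1$, and its boundary records the bubbling forced when a ramification value collides with $\infty$; I would check that $\overline{M}_d$ is a smooth proper Deligne--Mumford stack of dimension $d-1$.

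Next I would analyze the restriction to $\overline{M}_d$ of the obstruction theory of $\fM_{d[C]}$. Over the interior, deforming $f_0\circ\phi$ normal to $C$ is controlled by $R^\bullet\pi_*\widetilde f^*N$, where $\pi\colon\mathcal{C}\to\overline{M}_d$ is the universal curve, $\widetilde f\colon\mathcal{C}\to C$ the universal map, and $N$ the relative normal bundle of $C$ in $(Z,D)$ --- the normal bundle of $C$ twisted down by its tangency to $D$; one finds $R^0\pi_*\widetilde f^*N=0$ there, so that $\overline{M}_d$ exhausts the nearby relative stable morphisms set-theoretically, while $R^1\pi_*\widetilde f^*N$ has rank $d-1=\dim\overline{M}_d$. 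Hence $\overline{M}_d$ supports $\fM_{d[C]}$ near its generic point, the excess being an obstruction bundle $\mathrm{Ob}_d$ of rank $d-1$, and the contribution of $C$ is $M_w[d]=\int_{\overline{M}_d}e(\mathrm{Ob}_d)$ --- an honest integral, $\overline{M}_d$ being smooth. The subtle part is the behaviour of $\mathrm{Ob}_d$ on the boundary strata of $\overline{M}_d$: when the target is expanded along $D$ the normal bundle of $C$ acquires a further twist governed by the tangency order, and the fibre components of the domain lying in the expansion contribute additional terms. It is through this boundary analysis that the integer $w$ enters the computation, and getting it right is the first delicate point.

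Finally I would evaluate $\int_{\overline{M}_d}e(\mathrm{Ob}_d)$ by $\CC^*$-localization, using the torus acting on the target $\PP^1$ with fixed points $\infty$ and one other point. The fixed loci of $\overline{M}_d$ are indexed by decorated trees --- a chain of totally ramified multiple covers of $C$ glued to a chain of $\PP^1$-fibres in the expansion over $P$, with the remaining degree distributed over bubbles contracted to $P$ --- and each contributes an explicit product of equivariant Euler classes, the stacky factor $1/d$ of the totally ramified cover $z\mapsto z^d$ together with the weight of the node at which it meets the expansion accounting for the $1/d^2$ in the answer. Summing over all trees and simplifying, I expect the generating series $\sum_{d\ge1}M_w[d]\,q^d$ to be recovered by Lagrange inversion from a power of an elementary series, which yields $M_w[d]=\frac1{d^2}\binom{d(w-1)-1}{d-1}$; equivalently the tree sum can be organized as a recursion on $d$ by splitting off the component of the domain carrying the relative marked point. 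The main obstacle is this final combinatorial evaluation --- matching the localization sum with the stated binomial coefficient while keeping the stacky automorphism factors straight --- together with the boundary bookkeeping for $\mathrm{Ob}_d$ above; as a consistency check one can compare with the small cases $M_w[1]=1$, $M_w[2]=(2w-3)/4$, $M_1[d]=(-1)^{d-1}/d^2$, and $M_2[d]=1/d^2$.
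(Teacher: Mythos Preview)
This proposition is not proved in the present paper: it is quoted from \cite{GPS} as background, and the paper's own contribution is Theorem~\ref{theorem_main} on reducible curves. So there is no ``paper's proof'' to compare against here.

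That said, your outline is essentially the argument of \cite[Proposition~6.1]{GPS}. There one identifies the locus of $d$-fold covers with the relative space $\overline{M}_{0,(d)}(\PP^1/\infty,d)$, shows that the excess contribution is governed by an obstruction bundle coming from $R^1\pi_*$ of the pullback of the (log) normal bundle of $C$, and evaluates the resulting integral by virtual localization with respect to the standard $\CC^*$-action on the target $\PP^1$. The dependence on $w$ enters exactly as you say, through the twist of the normal bundle at the tangency point and the matching conditions on the expanded target; the combinatorics of the fixed loci reduces to a Lagrange-inversion/hypergeometric identity that produces the stated binomial coefficient. Your sanity checks $M_w[1]=1$, $M_1[d]=(-1)^{d-1}/d^2$, $M_2[d]=1/d^2$ are the right ones to keep track of signs and automorphism factors.

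The two places you flag as delicate are indeed the places where a careless treatment goes wrong: (i) on the boundary of $\overline{M}_d$ the obstruction bundle does not extend na\"ively as $R^1\pi_*\widetilde f^*N$, and one must use the relative (predeformable) obstruction theory of \cite{L1,L2} rather than the absolute one --- this is what makes the localization weights on the bubble tree come out with the correct powers of $w$; and (ii) in the localization sum the automorphism group of the totally ramified component contributes a $1/d$, while the node-smoothing factor contributes another $1/d$, and both must be kept. If you carry these through you will recover the formula; but since the paper simply cites the result, no further argument is required here.
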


For the relative Gromov-Witten invariants of log Calabi-Yau surfaces, 
we often have another kind of degenerations, 
namely reducible curves. 

\begin{example}
Let $E$ be a smooth cubic curve, 
and let us consider the relative Gromov-Witten invariants of $(\bP^2, E)$. 
Let $P\in E$ be a flex. 
Then the tangent line $L$ to $E$ at $P$ is a rational curve 
with full tangency to $E$. 
If $E$ is general, 
we can show that there are two nodal cubic curves $C_1, C_2$ 
with full tangency to $E$ at $P$. 
Then, $L\cup C_1$ and $L\cup C_2$ can contribute 
to the genus $0$, full tangency relative Gromov-Witten invariant. 
\end{example}

\begin{example}
Let us look at an example related to \cite[Examples 6.4]{GPS}. 
Let $D_1, D_2$ and $D_{out}$ be the coordinate lines of $\bP^2$, 
and take general points $x_{i1}, x_{i2}, x_{i3}\in D_i$ ($i=1, 2$). 
Let ${\bf P}=({\bf P}_1, {\bf P}_2)$ be a pair of ordered partitions 
of a positive integer $p_{out}$ into $3$ parts, i.e. 
${\bf P}_i$ is a triplet of nonnegative integers $p_{i1}, p_{i2}, p_{i3}$ 
with $\sum_{j=1}^3 p_{ij}=p_{out}$. 
We write ${\bf P}_i=p_{i1}+p_{i2}+p_{i3}$. 
Let $X^\circ[{\bf P}]$ denote the blow-up of $\bP^2\setminus\{(1:0:0), (0:1:0), (0:0:1)\}$ 
at $x_{11}, \dots, x_{23}$ 
(and so, it is in fact independent of ${\bf P}$), 
$D_{out}^\circ$ the proper transform of $D_{out}$, 
$H$ the pullback of a line 
and $E_{ij}$ the exceptional curve over $x_{ij}$. 

Consider the moduli space $\bar{\fM}(X^\circ[{\bf P}]/D_{out}^\circ)$ 
of genus $0$ relative stable morphisms 
of class $p_{out}H-\sum p_{ij}E_{ij}$ 
with full tangency to $D_{out}^\circ$. 
Although $X$ is not compact, this moduli space is compact. 
Its virtual Euler characteristic is a kind of relative Gromov-Witten invariant, 
and we denote it by $N_{\bf m}[{\bf P}_1, {\bf P}_2]$, 
where ${\bf m}=((1, 0), (0, 1))$ corresponds to 
the base surface $\PP^2$. 

The calculation in \cite[Examples 6.4]{GPS} gives 
$N_{\bf m}[1+1+1, 1+1+1]=18$. 
There are three points $P_1$, $P_2$, $P_3$ on $D_{out}$ 
at which a cubic through $x_{ij}$ can be fully tangent to $D_{out}$. 
By considering the monodromy, 
we see that for each $k$ there are $6$ rational cubics 
corresponding to elements of $\bar{\fM}(X^\circ[1+1+1, 1+1+1]/D_{out}^\circ)$ 
that touch $D_{out}$ at $P_k$. 

Now, if we take such curves $C_1$ and $C_2$ through $P_1$, 
then $C_1\cup C_2$ can contribute to 
$N_{\bf m}[2+2+2, 2+2+2]$. 
\end{example}

Thus we need to know the contributions from relative stable morphisms 
with reducible image curves. 
The following is our main theorem. 

\begin{theorem}\label{theorem_main}
Let $(Z, D)$ be a pair of 
a smooth surface and a smooth divisor. 
Denote by $\fM_\beta$ 
the moduli stack of 
genus $0$ relative stable morphisms of class $\beta$ 
with full tangency to $D$. 

Let $C_1$ and $C_2$ be curves satisfying the following: 
\begin{enumerate}
\item 
$C_i$ is a rational curve of class $\beta_i$ with full tangency to $D$, 
\item
$(K_Z+D).\beta_i=0$, 
\item
$C_1\cap D$ and $C_2\cap D$ consist of the same point $P$, 
and 
\item
$f_i$ are immersive and 
$(C_1.C_2)_P=\min\{D.C_1, D.C_2\}$. 
\end{enumerate}

Then there exists a unique point $[f]\in\fM_{\beta_1+\beta_2}$ 
whose image in $Z$ is $C_1\cup C_2$, 
and it is isolated with multiplicity $\min\{D.C_1, D.C_2\}$. 
\end{theorem}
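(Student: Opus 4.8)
\emph{Strategy.} I would split the proof into two parts: (a) a classification showing $[f]$ is unique, with an explicit description; and (b) a local deformation-theoretic computation showing $[f]$ is isolated and that $\fM_{\beta_1+\beta_2}$ has length $\min\{D\cdot C_1,D\cdot C_2\}$ there. Write $w_i=D\cdot C_i$ and assume $w_1\le w_2$.

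\emph{Classification.} Such an $f$ cannot map to the un-expanded $Z$, because the two local branches of $C_1\cup C_2$ at $P$ carry a total tangency $w_1+w_2$ to $D$, and assembling this into the single tangency marked point of a relative stable morphism forces an expansion of the target along $D$. Since $C_1$ and $C_2$ are immersed, the restriction of $f$ to the component(s) dominating $C_i$ is the normalisation $\PP^1\to C_i\subset Z$. Bookkeeping the contact orders along the expansion --- using that at a predeformable node over a divisor the contact orders on the two sides agree, and that no component maps into a relative divisor --- then forces exactly one level of expansion, $Z\cup_D P$ with $P=\PP(N_{D/Z}\oplus\cO)$: the domain is a chain of three $\PP^1$'s $R_1$--$R_0$--$R_2$, with $R_1\to C_1$, $R_2\to C_2$, and $R_0$ covering the fibre $F\cong\PP^1$ of $P$ over $P$ by the degree-$(w_1+w_2)$ map totally ramified over $F\cap D_\infty$ (where the tangency marked point lies) and with ramification profile $(w_1,w_2)$ over $F\cap D_0$ (matching $C_1,C_2$ at the two nodes). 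Up to the fibrewise $\GG_m$-action on $P$ there is a unique such cover, hence a unique $[f]$. (Hypothesis (4) is automatic when $w_1\ne w_2$; when $w_1=w_2$ it is exactly the condition guaranteeing that this one-step picture, rather than a further expansion, is the correct one.)

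\emph{Local structure.} For part (b) I would use Li's deformation theory of relative stable morphisms \cite{L2}. By (1)--(2) and immersedness, each $[f_i\colon\PP^1\to C_i]$ is a reduced, isolated point of $\fM_{\beta_i}$ --- the pertinent obstruction group on $\PP^1$ vanishes, the controlling line bundle having degree $-1$ --- and the fibre cover $R_0\to F$ is rigid, so all deformation/obstruction content of $[f]$ is concentrated at the two predeformable nodes $q_1=R_1\cap R_0$ and $q_2=R_2\cap R_0$, which lie over the \emph{same} node $D$ of $Z\cup_D P$. Writing $a_i$ for the smoothing parameter of $q_i$ and $s$ for that of the target node, predeformability gives $s=a_1^{w_1}=a_2^{w_2}$; and since smoothing $s$ (de-expanding) would force the component $R_0$, which carries the tangency marked point, into a relative divisor, $[f]$ is isolated. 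It then remains to extract the length of the Artinian local ring of $\fM_{\beta_1+\beta_2}$ at $[f]$ from the relations $s=a_1^{w_1}=a_2^{w_2}$ together with the residual $\GG_m$ of the bubble $P$ (equivalently, the rigidification of $R_0$). I expect this to collapse étale-locally to a length-$w_1$ scheme --- for $w_1=w_2=w$, for instance, the relevant locus is $\{a_1^{w}=a_2^{w}\}$, which has multiplicity $w$ at the origin --- yielding the multiplicity $\min\{w_1,w_2\}$.

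\emph{The main obstacle.} One can carry out part (b) concretely in formal coordinates near $P$ (take $D=\{y=0\}$, $C_1=\{y=x^{w_1}\}$, $C_2=\{y=cx^{w_2}+\cdots\}$ with $c\ne0$, write down the expanded target and the maps, and compute the Artinian quotient), or attempt to identify $\fM_{\beta_1+\beta_2}$ near $[f]$ with the appropriately scheme-structured full-tangency locus inside $\overline{M}_{0}(Z,\beta_1+\beta_2)$. In either route the subtle point --- and the technical heart of the argument --- is pinning down the scheme structure at $[f]$ precisely enough to see that the two contact conditions of orders $w_1$ and $w_2$ combine to give length exactly $\min\{w_1,w_2\}$, rather than the naive product $w_1w_2$ one might read off from multiplying the node-smoothing equations. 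I expect this to come down to the interaction between the $\GG_m$ of the bubble $P$ and the predeformability relations $s=a_1^{w_1}=a_2^{w_2}$, and getting that interaction right is where the real work lies.
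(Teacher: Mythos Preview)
Your classification in part (a) is correct and matches the paper's Corollary \ref{cor_configuration}(3) and the lemma opening \S3.2. One correction to a side remark: hypothesis (4) plays no role in the classification --- the one-level expansion and the chain-of-three-$\PP^1$'s picture hold whenever $C_1\neq C_2$, as the paper explicitly notes. Hypothesis (4) enters only in the multiplicity computation.

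For part (b) the paper takes precisely the explicit formal-coordinate route you list as an option, and your honest assessment that this is where the real work lies is accurate: \S3.3--3.8 are devoted to it. Your abstract heuristic --- predeformability relations $s=a_1^{w_1}=a_2^{w_2}$ modulo units, then the bubble $\GG_m$ --- captures the shape of the answer but hides exactly the point where the argument becomes delicate and where hypothesis (4) is used. In the paper's computation the tangent space is cut to dimension at most one not by the $\GG_m$-action but by the requirement that the first-order deformation of $f|_{X_i}$ extend over the node $R_i$ as a map to $Z$ (Lemma \ref{lem_w0w1_on_X1X2}); this forces the node-smoothing parameters into a specific ratio $\mu_1^{(1)}:\mu_2^{(1)}=d_1 A_2'(0):d_2 A_1'(0)$ determined by the tangent directions of the $C_i$ at $P$. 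The obstruction that stops the deformation at order $d_1$ then comes from matching the two expressions $t=\sigma_i\mu_i^{d_i}B_i(0)C_i(0)$ (Lemma \ref{lem_mapped_into_Z1}); for $d_1=d_2$ this gives $(A_2'(0))^{d_1}B_1(0)=(A_1'(0))^{d_1}B_2(0)$, whose \emph{failure} is exactly $(C_1.C_2)_P=d_1$. Thus the ``units'' in your relations carry the actual geometry of $C_1$ and $C_2$ and cannot be scaled away; the paper resolves this by tracking all Taylor coefficients order by order rather than by any packaged deformation-theory statement, and as written your proposal does not supply this step.
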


The plan of this paper is as follows. 
In \S2, we recall the definitions and facts related to relative stable morphisms from \cite{L1}. 
The main theorem is proved in \S3. 
In \S4, 
we calculate the number of rational quartics with full tangency 
to a smooth cubic, 
and confirm that the multiple cover formula 
and our formula 
give the correct relative Gromov-Witten invariant.

\subsection{Acknowledgment}
The author thanks Rahul Pandharipande 
for discussions on relative Gromov-Witten invariants 
which motivated this work.

\section{Relative stable morphisms}

In this section, 
we recall some definitions and facts about relative stable morphisms 
from \cite{L1}. 

Let $Z$ be a smooth projective variety 
and $D$ a smooth connected divisor in $Z$. 

\subsection{Standard models of expanded relative pairs, 
the stack of expanded relative pairs(\cite[\S 4.1]{L1})}

One notable thing about 
the theory of relative stable morphisms is 
that it deals with morphisms to degenerate target spaces. 
For that, one first introduces certain degenerating families, 
called standard models of expanded relative pairs. 
Here we only explain what will be needed later. 

For each nonnegative integer $n$, 
the standard model $(Z[n], D[n])$ is a pair 
of a smooth variety $Z[n]$ equipped with morphisms to $\bA^n$ and $Z$
and a smooth divisor $D[n]$ on $Z[n]$. 
Via the morphism to $\bA^n$, 
it is regarded as a family of relative pairs over $\bA^n$. 

For $n=0$, 
let $(Z[0], D[0])=(Z, D)$. 

For $n=1$, 
let $Z[1]$ be the blow-up of $Z\times\bA^1$ 
with center $D\times\{0\}$, 
and $D[1]$ the proper transform of $D\times\bA^1$. 
We write $t$ for the coordinate of $\bA^1$. 
The fiber $(Z[1]_0, D[1]_0)$ over $(t=0)\in\bA^1$ can be described as follows. 
Let $\Delta=\PP(\cO_D\oplus N_{D/Z})$ 
be the $\bP^1$-bundle over $D$ 
associated to $\cO_D\oplus N_{D/Z}$ in the covariant way. 
Let $D_\infty$ and $D_0$ be sections corresponding to 
$\cO_D\oplus 0$ and $0\oplus N_{D/Z}$. 
Then $Z[1]_0$ is obtained by 
gluing $\Delta$ and $Z$ along $D_0$ and $D$, 
and $D[1]_0$ corresponds to $D_\infty$. 

Consider the morphism $\bA^n\to\bA^1$ 
given by $(t_1, \dots, t_n)\mapsto t_1\cdots t_n$. 
Then $Z[n]$ is obtained as a certain birational modification 
of $Z[1]\times_{\bA^1}\bA^n$. 
We skip the precise definition, 
and only state the following properties. 
\begin{enumerate}
\item
$Z[n]$ is equipped with a proper birational morphism $Z[n]\to Z\times\bA^n$, 
and the family $Z[n]\to\bA^n$ is flat and proper. 
We denote by $\varphi: Z[n]\to Z$ the projection to $Z$. 
\item
Let $G=\bG_m^n$ act on $\bA^n$ in the natural way. 
Then the induced $G$-action on $Z\times\bA^n$ 
lifts to $Z[n]$. 
\item
Let $\bH_l:=\{(t_1, \dots, t_n)\in\bA^n| t_l=0\}$. 
Then the non-smooth locus of $Z[n]\to\bA^n$ is 
$\coprod_{l=1}^n \bB_l\subset Z[n]$, 
where $\bB_l$ is mapped isomorphically 
to $D\times\bH_l\subset Z\times\bA^n$. 
\item
If $k$ components of $\bt=(t_1, \dots, t_n)$, 
say $t_{l_1}, \dots, t_{l_k}$, are zero 
and the others are nonzero, 
then the fiber $Z[n]_{\bt}$ over $\bt$ can be written as 
$\Delta_1\cup\dots\cup\Delta_k\cup\Delta_{k+1}$, 
where 
\begin{itemize}
\item
$\Delta_1, \dots, \Delta_k$ 
are isomorphic to $\Delta$, 
$\Delta_{k+1}$ is isomorphic to $Z$ via $\varphi$, 
\item
$\Delta_i\cap\Delta_{i+1}=(\bB_{l_i})_{\bt}$, 
and it corresponds to 
$D_0$ (resp. $D_{\infty}$ or $D$) 
under the isomorphism of $\Delta_i$ and $\Delta$
(resp. $\Delta_{i+1}$ and $\Delta$ or $Z$). 
\item
$\Delta_i\cap\Delta_j=\emptyset$ 
if $|i-j|>1$, and 
\item
$D[n]_{\bt}$ is contained in $\Delta_1$, 
and it corresponds to $D_\infty$ (or $D$) 
under the isomorphism of $\Delta_1$ and $\Delta$ (or $Z$). 
\end{itemize}
\item
For any $P\in(\bB_{l_i})_{\bt}$, 
there are regular functions $w_1$ and $w_2$ on a neighborhood of $P$ 
with the following property. 
If $w'_1, \dots, w'_{\dim Z-1}$ are functions on a neighborhood of $\varphi(P)$ 
in $Z$ 
that restrict to \'etale coordinates on $D$, 
then $Z[n]$ is \'etale locally isomorphic to 
\[
\{(w_1, w_2, w'_1, \dots, w'_{\dim Z-1}, t_1, \dots, t_n)| w_1w_2=t_{l_i}\}, 
\]
and $\Delta_i$(resp. $\Delta_{i+1}$) is defined by $w_1$(resp. $w_2$). 
\end{enumerate}

\begin{definition}
(1)
Let $S$ be a scheme. 
A family of expanded relative pairs of $(Z, D)$ over $S$ is 
a pair $(\cZ, \calD)$ equipped with a morphism $\cZ\to Z\times S$, 
such that the following holds: 
$\cZ$ is a scheme, $\calD$ is a closed subscheme of $\cZ$, 
and there exist an open covering $S=\bigcup S_\alpha$, 
natural numbers $n_\alpha$ and morphisms $S_\alpha\to\bA^{n_\alpha}$ 
such that $(\cZ, \calD)\times_S S_\alpha$ 
is isomorphic to $(Z[n_\alpha], D[n_\alpha])\times_{\bA^{n_\alpha}} S_\alpha$ 
over $Z\times S$. 

When two families $(\cZ_1, \calD_1)$  and $(\cZ_2, \calD_2)$ 
over the same base $S$ are given, 
an isomorphism of these families 
means an isomorphism over $Z\times S$. 

If $\xi=(\cZ, \calD)$ is a family over $S$ 
and $\rho: T\to S$ is a morphism, 
the pullback $\rho^*\xi$ is defined as 
the family $(\cZ\times_S T, \calD\times_S T)$ over $T$. 

(2)
Let $\fZrel$ be the category defined as follows. 
An object of $\fZrel$ is a family of expanded relative pairs of $(Z, D)$ over 
some scheme $S$. 
If $\xi_i$ ($i=1, 2$) are families over $S_i$, 
a morphism $\xi_1\to \xi_2$ 
is a pair of a morphism $\rho: S_1\to S_2$ 
and an isomorphism $\xi_1\cong\rho^*\xi_2$. 
\end{definition}

\begin{proposition}
Let $\mathfrak{p}: \fZrel\to\mathfrak{Sch}$ be 
the functor which sends a family over $S$ to $S$. 
Then $(\fZrel, \mathfrak{p})$ is a stack. 
\end{proposition}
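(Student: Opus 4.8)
The plan is to verify, say with respect to the \'etale topology on $\mathfrak{Sch}$ (the argument works just as well for the fppf, and with a little more care the fpqc, topology, hence a fortiori for coarser ones), the two groups of axioms defining a stack: first that $\mathfrak{p}\colon\fZrel\to\mathfrak{Sch}$ is a category fibered in groupoids, and then that isomorphisms form a sheaf and that objects equipped with descent data are effective. The first two of these are essentially formal; the real content lies in effective descent.

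That $\fZrel$ is fibered in groupoids I would extract directly from the Definition. The pullback $\rho^*\xi=(\cZ\times_S T,\calD\times_S T)$ along a morphism $\rho\colon T\to S$ is already prescribed there, and it again lies in $\fZrel$: if an open covering $S=\bigcup S_\alpha$ and maps $S_\alpha\to\bA^{n_\alpha}$ exhibit the standard-model structure of $\xi$, then $T=\bigcup\rho^{-1}(S_\alpha)$ and the composites $\rho^{-1}(S_\alpha)\to S_\alpha\to\bA^{n_\alpha}$ do the same for $\rho^*\xi$. The arrow $(\rho,\mathrm{id})\colon\rho^*\xi\to\xi$ is then cartesian: by the definition of morphisms in $\fZrel$, a morphism $\eta\to\xi$ lying over a composite $T'\xrightarrow{\tau}T\xrightarrow{\rho}S$ is the same thing as an isomorphism $\eta\cong(\rho\tau)^*\xi$, and since $(\rho\tau)^*\xi$ is canonically $\tau^*\rho^*\xi$ this is in turn the same datum as a morphism $\eta\to\rho^*\xi$ lying over $\tau$. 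Finally, a morphism lying over $\mathrm{id}_S$ is a pair $(\mathrm{id}_S,\text{iso})$, hence an isomorphism, so the fibers are groupoids; composition and the unit and associativity constraints are inherited from fiber products.

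Next I would check the separation (prestack) axiom: for two families $\xi=(\cZ,\calD)$ and $\xi'=(\cZ',\calD')$ over $S$, the presheaf on the big site of $S$ sending $\rho\colon T\to S$ to the set $\mathrm{Isom}_T(\rho^*\xi,\rho^*\xi')$ is a sheaf. An element of this set is an isomorphism $\cZ\times_S T\xrightarrow{\sim}\cZ'\times_S T$ of $Z\times T$-schemes carrying $\calD\times_S T$ onto $\calD'\times_S T$. The functor sending $T$ to the set of isomorphisms of $Z\times T$-schemes from $\cZ\times_S T$ to $\cZ'\times_S T$ is an fpqc sheaf by classical descent for morphisms of schemes, and the extra requirement of matching the closed subschemes $\calD$, $\calD'$ is a further local (indeed closed) condition compatible with this sheaf structure; hence the Isom-presheaf of families is a sheaf.

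The crux is effective descent. Given a covering $\{S_i\to S\}$, families $\xi_i=(\cZ_i,\calD_i)$ over the $S_i$, and isomorphisms over the products $S_i\times_S S_j$ obeying the cocycle identity, I must build a family $\xi$ over $S$ restricting to the $\xi_i$. The observation to exploit is that $\pi_i\colon\cZ_i\to Z\times S_i$ is proper and carries a relatively ample line bundle $L_i$ that is intrinsic to this birational morphism: since $Z[n]$ is an iterated blow-up of $Z\times\bA^n$, one obtains $L_i$ as a suitable negative combination $\cO_{\cZ_i}\bigl(-\sum_l a_l\cE_{i,l}\bigr)$ of the exceptional divisors $\cE_{i,l}$ of $\cZ_i\to Z\times S_i$ (with positive integers $a_l$ rapidly decreasing in the order of the blow-ups), and being intrinsic it is preserved by every isomorphism of families, in particular by the given gluing isomorphisms. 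Then the graded $\cO_{Z\times S_i}$-algebras $\bigoplus_{m\ge 0}(\pi_i)_*L_i^{\otimes m}$ inherit descent data, descend by fpqc descent for quasi-coherent algebras to a graded $\cO_{Z\times S}$-algebra $\mathcal{A}$, and I would set $\cZ:=\underline{\mathrm{Proj}}\,\mathcal{A}\to Z\times S$, with $\calD\subset\cZ$ cut out by the similarly descended ideal sheaves of the $\calD_i$. Because relative Proj commutes with base change, $(\cZ,\calD)\times_S S_i\cong(\cZ_i,\calD_i)$ compatibly with the descent data, and pulling this back along an open covering of each $S_i$ on which $\xi_i$ is already a standard model exhibits $(\cZ,\calD)$ as a family of expanded relative pairs. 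The main obstacle is exactly this final step, in two guises: one must make the choice of the $L_i$ canonical enough to be literally preserved by arbitrary isomorphisms of families (so that it descends), and, if the covering in the Definition is meant to be Zariski while the descent covering is only \'etale, one must verify that the descended $(\cZ,\calD)$ satisfies the Definition on the nose, which uses the rigidity of the standard models $(Z[n],D[n])$ and the structure of their automorphism functors (assembled from $\bG_m^n$ and unipotent parts) to pass from being fppf-locally a pullback of a standard model to being \'etale- or Zariski-locally one. All the remaining verifications are routine unwindings of descent for schemes, for morphisms, and for closed subschemes.
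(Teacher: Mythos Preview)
The paper does not supply its own proof of this proposition: Section~2 is expressly a summary of definitions and facts from \cite{L1}, and the statement is recorded there without argument. So there is nothing in the paper to compare against beyond the implicit reference to Li's original treatment.

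On its own terms, your outline is sound through the fibered-in-groupoids and Isom-sheaf verifications. For effective descent, the Proj-of-a-descended-graded-algebra strategy is standard and workable, but your candidate $L_i=\cO\bigl(-\sum_l a_l\cE_{i,l}\bigr)$ is not yet well-defined as stated: the decomposition of the exceptional locus of $\cZ_i\to Z\times S_i$ into ordered components $\cE_{i,l}$ depends on the local chart $S_\alpha\to\bA^{n_\alpha}$, and the $n_\alpha$ may differ on overlaps, so a formula with weights depending on $l$ has no a~priori meaning on all of $\cZ_i$. With all $a_l$ equal the sum becomes the full exceptional divisor, which is intrinsic but need not be relatively ample for an iterated blow-up; with unequal weights you must show invariance under the transition isomorphisms between pullbacks of different $Z[n]$, which is exactly the ``rigidity of standard models'' input you defer at the end. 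That comparison of standard models and their automorphism and isomorphism functors (the $\bG_m^n$-equivariant structure that also underlies Proposition~\ref{prop_quotient}) is where the actual work lies, and in \cite{L1} it is carried out directly rather than packaged into a universal polarization. Your route can be completed, but the construction of a truly canonical $L_i$ is a genuine step, not a formality, and as written it remains a gap you have identified but not closed.
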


\subsection{Relative stable morphisms and their moduli(\cite[\S 4.2]{L1})}

\begin{definition}\label{def_graph}
An admissible weighted graph $\Gamma$ is 
a finite graph with vertices $V(\Gamma)$ 
and no internal edges, 
along with
\begin{itemize}
\item
an ordered collection of external edges, called legs, 
\item
an ordered collection of weighted external edges, called roots, and 
\item
two weight functions $g: V(\Gamma)\to\ZZ_{\geq 0}$ (genera) 
and $b: V(\Gamma)\to A_1(Z)/\sim_{\rm alg}$ (degrees), 
\end{itemize}
such that the graph is connected 
when all roots are considered as connected. 

An isomorphism of graphs is a bijection between the sets of vertices, 
legs and roots 
preserving adjacency, weight functions on vertices, 
the orders of legs and roots, 
and weights of roots. 
\end{definition}

Let $l$, $k$ and $r$ denote the number of vertices, 
legs and roots 
and write $v_1, \dots, v_l$ for the vertices 
and $\mu_1, \dots, \mu_r$ for the weights of the roots.

\begin{remark}
In this paper, 
we will only use the case with one vertex of genus $0$ and one root. 
\end{remark}

\begin{definition}
Let $S$ be a scheme 
and $\Gamma$ an admissible graph. 
An $S$-family of relative stable morphisms to $(Z[n], D[n])$ 
of type $\Gamma$ is 
a quadruple $(f, \cX, (\cQ_i)_{i=1}^r, (\cP_j)_{j=1}^k)$, 
where 
\begin{enumerate}
\item
$\cX=\cX_1\coprod\dots\coprod\cX_l$, 
where $\cX_h$ is a family of (connected) prestable curves of genus $g(v_h)$ over $S$. 
(Note that the ordering of $\cX_h$ is determined by the next condition 
since either $l=1$ 
or each vertex has at least one root attached.) 
\item
$\cQ_i: S\to \cX$ and $\cP_j: S\to\cX$ are disjoint sections 
to the smooth locus of $\cX\to S$ 
such that $\cQ_i(S)\subset\cX_h$(resp. $\cP_j(S)\subset\cX_h$) 
if the $i$-th root(resp. $j$-th leg) is 
attached to $v_h$. 
\item
$f: \cX\to Z[n]$ is an $\bA^n$-morphism for a (unique) morphism $S\to\bA^n$, 
$f^{-1}D[n]=\sum_{i=1}^r \mu_i \cQ_i(S)$ 
and for each closed point $s\in S$ the class $(\varphi\circ f)_*((\cX_h)_s)$ 
in $Z$ is $b(v_h)$. 
\item
The morphism $f|_{\cX_h}$ 
together with marked sections in $\cX_h$ 
is a family of stable morphisms to $Z[n]$. 
\end{enumerate}

We call $\cQ_i$ the distinguished marked sections 
and $\cP_j$ the ordinary marked sections. 
\end{definition}

\begin{definition}
(1)
(\cite[\S 2.1]{L1})
Let $k[t]\to k[w_1, w_2]$ be the homomorphism 
defined by $t\mapsto w_1w_2$, 
$\phi: k[[s]]\to k[[z_1, z_2]]$ the continuous homomorphism 
defined by $s\mapsto z_1z_2$. 
For a $k[[s]]$-algebra $A$ which is $(s)$-adically complete, 
let $R=k[[z_1, z_2]]\otimes_{k[[s]]}A$ 
and write $\hat{R}$ for the $(z_1, z_2)$-adic completion of $R$. 

Let $\psi: k[t]\to A$ be a homomorphism 
and regard $\hat{R}$ as a $k[t]$-algebra by the induced homomorphism. 
Then a $k[t]$-homomorphisms $\varphi: k[w_1, w_2]\to\hat{R}$ 
is said to be of pure contact of order $n$ if, 
possibly after exchanging $z_1$ and $z_2$, 
one has 
\[
\varphi(w_1)=z_1^n\beta_1, \varphi(w_2)=z_2^n\beta_2 
\]
for units $\beta_1, \beta_2\in \hat{R}^*$ satisfying $\beta_1\beta_2\in A^*$. 

(2)
(\cite[\S 2.2]{L1})
Let $S$ be scheme over $\bA^n$, 
$\cX$ a disjoint union of families of prestable curves over $S$ 
and $f: \cX\to Z[n]$ a morphism over $\bA^n$. 
Then $f$ is called predeformable if the following hold. 

For any $l\in[1, n]$ and 
$R\in f^{-1}(\bB_l)$, 
let $b\in S$ and $\bt\in\bA^n$ be points below $R$ 
and write $P=f(R)$. 
Let $w_1$ and $w_2$ be formal functions near $P$ 
as in the description of $Z[n]$. 
The first requirement is that $R$ is a node in $\cX_b$. 
Choose formal functions $s$ on $S$ 
and $z_1, z_2$ on $\cX$ 
such that $(k[[z_1, z_2]]\otimes_{k[[s]]}\hat{\cO}_{S, b})^\wedge$
is isomorphic to $\hat{\cO}_{\cX, R}$. 
Write $t=t_l$. 
Then we require that the induced homomorphism 
$k[w_1, w_2]\to (k[[z_1, z_2]]\otimes_{k[[s]]}\hat{\cO}_{S, b})^\wedge$ 
is of pure contact. 
\end{definition}

\begin{remark}
(1)
The definition of predeformablity does not depend on 
the choice of $s, z_1, z_2, w_1$ and  $w_2$. 
If they are regular functions, 
one may take $\beta_1, \beta_2\in \cO_{\cX, R}^*$ 
with $\beta_1\beta_2\in\cO_{S, b}^*$. 

(2)
If $f$ is predeformable, 
then it is nondegenerate, 
i.e. for any $s\in S$, 
no irreducible component of $\cX_s$ is mapped into any of $\bB_i$. 

(3)
If $S=\Spec \CC$, 
the condition is as follows. 
If $f(R)\in\bB_i=\Delta_i\cap \Delta_{i+1}$, 
then $R$ is a node, 
the two branches at $R$ map to $\Delta_i$ and $\Delta_{i+1}$, 
and the pullbacks of $\bB_i$ 
to the two branches have the same order at $R$. 
\end{remark}

\begin{definition}
(1)(\cite[Definition 4.8]{L1}) 
An $S$-family of relative prestable morphisms to $\fZrel$ of type $\Gamma$ 
is a data $\xi=(f, \cX, (\cQ_i)_{i=1}^r, (\cP_j)_{j=1}^k, \cZ, \calD)$ where 
\begin{itemize}
\item
$\cX$ is flat family over $S$, 
\item
$(\cZ, \calD)$ is an object of $\fZrel(S)$ and 
\item
$f: \cX\to\cZ$ is a morphism over $S$, 
\end{itemize}
such that 
there exist an open covering $S=\bigcup S_\alpha$, 
natural numbers $n_\alpha$ and morphisms $S_\alpha\to\bA^{n_\alpha}$ 
satisfying the following properties: 
\begin{itemize}
\item 
$(\cZ, \calD)\times_S S_\alpha$ 
is isomorphic to $(Z[n_\alpha], D[n_\alpha])\times_{\bA^{n_\alpha}} S_\alpha$ 
over $Z\times S$. 
\item
The induced quadruple 
$(f, \cX, (\cQ_i), (\cP_j))\times_S S_\alpha$ 
is an $S_\alpha$-family of 
relative stable morphisms to $(Z[n_\alpha], D[n_\alpha])$ 
of type $\Gamma$. 
\end{itemize}

(2)
Let 
$\xi'=(f', \cX', (\cQ'_ i)_{i=1}^r, (\cP'_j)_{j=1}^k, \cZ', \calD')$ 
be another 
$S$-family of relative prestable morphisms to $\fZrel$ of type $\Gamma$. 
An isomorphism from $\xi$ to $\xi'$ is a pair $(r_1, r_2)$ 
consisting of 
\begin{itemize}
\item
an isomorphism $r_1: \cX\to\cX'$ over $S$ 
compatible with marked sections, and 
\item
an isomorphism $r_2: \cZ\to\cZ'$ over $Z\times S$
\end{itemize}
such that $r_2\circ f=f\circ r_1$. 

(3)
The family $\xi$ is called stable if 
its local representatives 
$(f, \cX, (\cQ_i), (\cP_j))\times_S S_\alpha$ 
are predeformable and 
the group of automorphisms of 
$\xi_b$ is finite 
for any closed point $b\in S$. 
\end{definition}

\begin{definition}(\cite[Definition 4.9]{L1}) 
Let $\fM(\fZrel, \Gamma)$ denote the category of 
relative stable morphisms to $\fZrel$ of type $\Gamma$. 
\end{definition}

\begin{theorem}(\cite[Theorem 4.10]{L1})
Let $\mathfrak{p}: \fM(\fZrel, \Gamma)\to\mathfrak{Sch}$ be 
the functor which sends families over $S$ to $S$. 
Then $(\fM(\fZrel, \Gamma), \mathfrak{p})$ is an algebraic stack, 
separated and proper over $k$. 
\end{theorem}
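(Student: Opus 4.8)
The plan is to follow the standard architecture for moduli of maps: first verify the stack axioms, then establish algebraicity via a relative moduli-of-maps construction over the already-established stack $\fZrel$, then prove boundedness, and finally deduce separatedness and properness through the valuative criteria. That $(\fM(\fZrel,\Gamma),\mathfrak{p})$ is a stack is essentially formal: morphisms and isomorphisms of families are defined over $Z\times S$, so descent for the underlying curves $\cX$, the sections $\cQ_i$ and $\cP_j$, the expanded targets $(\cZ,\calD)\in\fZrel(S)$, and the maps $f$ all reduce to fppf descent for quasi-projective schemes and their morphisms, which is classical.

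For algebraicity I would argue locally. Over a chart $S_\alpha\to\bA^{n_\alpha}$ the target is the fixed family $(Z[n_\alpha], D[n_\alpha])\to\bA^{n_\alpha}$, and an object is an ordinary family of stable maps from a prestable curve into $Z[n_\alpha]$, with the genus, class, and markings prescribed by $\Gamma$, subject to the two extra requirements $f^{-1}D[n_\alpha]=\sum_i\mu_i\cQ_i(S_\alpha)$ and predeformability. The relative Kontsevich stack of stable maps to $Z[n_\alpha]/\bA^{n_\alpha}$ is algebraic; the tangency condition is closed, and predeformability is a locally closed condition, being the pure-contact condition along $f^{-1}(\bigsqcup_l\bB_l)$ checked on completed local rings. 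Hence each chart carves out an algebraic stack, and these glue over $\fZrel$ by the very definition of $\fM(\fZrel,\Gamma)$; equivalently, one presents $\fM(\fZrel,\Gamma)$ as an algebraic substack of the Hom-stack from the universal curve to the universal expanded pair over $\fZrel$. The $\bG_m^{n_\alpha}$-equivariance of the charts then assembles a smooth atlas.

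Boundedness, which is needed both for finite type and for the valuative criteria, follows from fixing the discrete invariants. The arithmetic genus, the class $\beta=\sum_h b(v_h)$ (hence the degree against a fixed polarization), and the number $k+r$ of marked points are determined by $\Gamma$; stability bounds the number of unstable and contracted components, and crucially the expansion level $n$ is bounded, since each nontrivial level contributes a $\Delta$-component carrying positive degree against the relative polarization while the total degree is fixed. Thus the objects form a bounded family.

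Separatedness and properness I would establish by the valuative criteria over a discrete valuation ring $R$ with fraction field $K$. For separatedness, two families over $\Spec R$ that agree over $\Spec K$ must be uniquely isomorphic: once the expansion level and the central expanded target are pinned down by predeformability, the limit curve and map are unique by separatedness of ordinary stable maps, and the potential ambiguity coming from the $\bG_m^n$-action scaling the bubbles is removed precisely by the finiteness of automorphisms built into the stability condition. For properness I would use the valuative criterion permitting a finite extension of $R$: starting from a family over $\Spec K$, first take the naive stable-map limit into $Z$, then \emph{expand} the target, lifting those components of the central fiber that fall into $D$ into newly created $\Delta$-bubbles and redistributing the map so that the pure-contact condition holds at every node over $\bigsqcup_l\bB_l$ and so that stability is restored by contracting or further expanding. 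I expect this relative stable reduction to be the main obstacle: one must show that the correct expansion level $n$ and the redistribution of the limiting map exist, are predeformable, and yield a stable object, which is the technical heart of the construction. Combining algebraicity, boundedness, separatedness, and properness then shows that $(\fM(\fZrel,\Gamma),\mathfrak{p})$ is a separated, proper algebraic stack over $k$.
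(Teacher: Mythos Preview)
The paper does not prove this statement at all: it is quoted verbatim as \cite[Theorem~4.10]{L1} and used as background. There is therefore no ``paper's own proof'' to compare your proposal against. Your outline is a reasonable high-level summary of the architecture of Li's argument in \cite{L1} (descent for the stack axioms, algebraicity via charts of relative stable-map stacks with the predeformability locus cut out as a locally closed condition, boundedness of the expansion level, and the valuative criteria handled by a relative stable reduction that expands the target), and indeed the subsequent Proposition~\ref{prop_quotient} in the paper explicitly points back to the proofs of \cite[Theorems~2.11 and 3.10]{L1} for the chart description you sketch. But as a self-contained proof your write-up is only a roadmap: the substantive work---that predeformability is locally closed, that the expansion-and-redistribution process in the properness argument terminates and yields a predeformable stable object, and that the $\mathbb{G}_m^n$-ambiguity is absorbed by the stability condition---is asserted rather than carried out, and each of these is a genuine technical theorem in \cite{L1}.
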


\begin{proposition}\label{prop_quotient}
For an admissible weighted graph $\Gamma$ 
as in Definition \ref{def_graph}, 
denote by $\fM(Z[n], \Gamma)$ the moduli space of 
stable morphisms $(f, X, (Q_i)_{i=1}^r, (P_j)_{j=1}^k)$, where 
\begin{itemize}
\item
there is given a one-to-one correspondence $v\mapsto X_v$ 
between vertices of $\Gamma$ and connected components of $X$, 
\item
$Q_i$ (resp. $P_j$) is a smooth point of $X$ 
which is contained in $X_v$ if $i$-th root (resp. $j$-th leg) 
is connected to $v$, 
\item
$p_a(X_v)=g(v)$, and 
\item
$f: X\to Z[n]$ is a stable morphism such that 
${\rm Im}(f)$ is contained in a fiber of $Z[n]\to\bA^n$ 
and $(\varphi\circ f)_*[X_v]=b(v)$. 
\end{itemize}
Then it has a locally closed substack $\fM((Z[n], D[n]), \Gamma)^{st}$ 
consisiting of relative stable morphisms to $\fZrel$. 
The $\bG_m^n$-action on $Z[n]$ induces 
a $\bG_m^n$-action on $\fM((Z[n], D[n]), \Gamma)^{st}$, 
and the natural morphism $\fM((Z[n], D[n]), \Gamma)^{st}/\bG_m^n\to \fM(\fZrel, \Gamma)$ 
is \'etale. 
\end{proposition}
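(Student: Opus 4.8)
The plan is to realize $\fM((Z[n], D[n]), \Gamma)^{st}$ as a fibre product of $\fM(\fZrel, \Gamma)$ with the standard-model chart $\bA^n\to\fZrel$, and then deduce étaleness of the quotient map by base change from the étaleness of this chart. First I would set up the fixed-target moduli. The stack $\fM(Z[n], \Gamma)$ is an ordinary moduli of stable morphisms to the fibres of the proper family $Z[n]\to\bA^n$, with prescribed genera, degrees and markings, hence a separated Deligne--Mumford stack by standard Gromov--Witten theory. Inside it, predeformability (pure contact of order $\mu_i$ at the nodes of $\cX$ lying over the non-smooth locus $\coprod_l\bB_l$) together with the tangency condition $f^{-1}D[n]=\sum_i\mu_i\cQ_i(S)$ cut out a locally closed substack: nondegeneracy is open, pure contact of a fixed order is locally closed on the nondegenerate locus (via the local equations $w_1w_2=t_{l_i}$ of property (5)), and once predeformability holds the equality $f^{-1}D[n]=\sum_i\mu_i\cQ_i(S)$ is closed. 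This substack is $\fM((Z[n], D[n]), \Gamma)^{st}$, and by construction its $S$-points are precisely the relative stable morphisms to the fixed family $(Z[n], D[n])\to\bA^n$, i.e. the objects of $\fM(\fZrel, \Gamma)(S)$ whose expanded pair is trivialized by a standard model.

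Next I would produce the $\bG_m^n$-action and the comparison morphism. By property (2) the group $G=\bG_m^n$ acts on $Z[n]$ over $Z$, covering the standard action on $\bA^n$ and preserving $D[n]$ and $\coprod_l\bB_l$; post-composing $f$ with $g\in G$ and replacing the structure map $S\to\bA^n$ by its $g$-translate preserves the marked curve, predeformability and the tangency condition, giving the asserted $\bG_m^n$-action. The comparison morphism sends a fixed-target family with structure map $S\to\bA^n$ to the expanded pair $(\cZ, \calD)=(Z[n], D[n])\times_{\bA^n}S$ together with the induced $f\colon\cX\to\cZ$, an object of $\fM(\fZrel, \Gamma)(S)$. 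Since each $g\in G$ furnishes a canonical isomorphism over $Z\times S$ between this family and its $g$-twist, the morphism is $\bG_m^n$-invariant and descends to $\fM((Z[n], D[n]), \Gamma)^{st}/\bG_m^n\to\fM(\fZrel, \Gamma)$.

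For étaleness I would argue by base change along the ``take the target'' morphism $\tau\colon\fM(\fZrel, \Gamma)\to\fZrel$. Directly from the definitions one has, $\bG_m^n$-equivariantly,
\[
\fM((Z[n], D[n]), \Gamma)^{st}\cong\fM(\fZrel, \Gamma)\times_{\fZrel}\bA^n,
\]
the action on the right being the reparametrization action on the chart $\bA^n\to\fZrel$ classifying the standard family. Writing $\pi\colon\bA^n\to[\bA^n/\bG_m^n]$ for the tautological torsor and $q\colon[\bA^n/\bG_m^n]\to\fZrel$ for the induced map, base change of $\pi$ exhibits $\fM((Z[n], D[n]), \Gamma)^{st}$ as a $\bG_m^n$-torsor over $\fM(\fZrel, \Gamma)\times_{\fZrel}[\bA^n/\bG_m^n]$, so passing to the quotient stack gives
\[
\fM((Z[n], D[n]), \Gamma)^{st}/\bG_m^n\cong\fM(\fZrel, \Gamma)\times_{\fZrel}[\bA^n/\bG_m^n],
\]
which is the base change of $q$ along $\tau$. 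Hence the quotient map is étale as soon as $q$ is étale.

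The hard part is exactly the étaleness of $q\colon[\bA^n/\bG_m^n]\to\fZrel$, i.e. that the standard models give étale charts on $\fZrel$; this is where all the geometry sits, the rest being formal manipulation of fibre products and torsor quotients. I would verify it by a deformation-theoretic computation at a point $\bt$ with $k$ vanishing coordinates: its image in $\fZrel$ is the $k$-bubbled pair $\Delta_1\cup\dots\cup\Delta_k\cup Z$, whose automorphisms as an expanded relative pair are the stabilizer $\bG_m^k\subset\bG_m^n$ of $\bt$, and whose first-order deformations smooth the $k$ nodes $\bB_{l_i}$ independently and unobstructedly --- matching, via the local model $w_1w_2=t_{l_i}$ of property (5), the $k$ normal coordinate directions to the stratum through $\bt$. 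This identification of automorphisms with the stabilizer and of deformations with the transverse coordinate directions shows that $q$ is formally étale and, together with its representability, étale; the complete argument is that of \cite{L1}.
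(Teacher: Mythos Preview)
Your proposal is correct and in fact supplies considerably more detail than the paper does: the paper's entire proof is the sentence ``this is proved in the course of the proof of the previous theorem; see the proof of \cite[Theorems~2.11 and~3.10]{L1}.'' Your outline --- realizing $\fM((Z[n],D[n]),\Gamma)^{st}$ as the fibre product $\fM(\fZrel,\Gamma)\times_{\fZrel}\bA^n$ and reducing the \'etaleness of the quotient map to that of the chart $[\bA^n/\bG_m^n]\to\fZrel$ --- is exactly the structure of Li's argument in \cite{L1}, so your approach and the paper's (deferred) approach coincide.
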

\begin{proof}
Actually, this is proved in the course of the proof of the previous theorem. 
See the proof of \cite[Theorems 2.11 and 3.10]{L1}. 
\end{proof}

\section{Proof of the Main theorem}

Let us prove Theorem \ref{theorem_main}.

\subsection{Genus $0$ relative stable morphisms with full tangency}

Let us first prove some basic facts 
about genus $0$ relative stable morphisms with full tangency. 
Let $Z$ be a smooth surface 
and $D$ a smooth connected divisor on $Z$. 

\begin{definition}
(1)
For an algebraic class $\beta$ of dimension $1$ on $Z$ 
with $D.\beta>0$ 
and a non-negative integer $k$, 
let $\Gamma_{\beta, k}$ be the admissible weighted graph 
as follows. 
\begin{itemize}
\item
$\Gamma_{\beta, k}$ has only one vertex $v$ 
with $g(v)=0$ and $b(v)=\beta$. 
\item
$\Gamma_{\beta, k}$ has $k$ legs, 
and has one root of weight $D.\beta$. 
\end{itemize}
Let $\Gamma_\beta=\Gamma_{\beta, 0}$. 
We define $\fM_\beta$ to be 
$\fM(\fZrel, \Gamma_\beta)$ 
and refer to their points as genus $0$ relative stable morphisms 
of class $\beta$ with full tangency to $D$. 

(2)
We write $\fM_\beta^\circ$ for the open substack of $\fM_\beta$ 
whose points correspond to 
morphisms to $(Z[0], D[0])$. 
\end{definition}

To explain what 
a relative stable morphism $f$ in $\fM_\beta\setminus\fM_\beta^\circ$ 
looks like, 
we consider the following combinatorial data. 

\begin{definition}\label{def_graph2}
For a nonnegative integer $n$ and a positive integer $r$, 
let $\cG_{n, r}$ be the set of isomorphism classes 
of triples $(G, \lambda, \rho)$, where 
\begin{itemize}
\item
$G$ is a finite graph with the set of vertices $V(G)$, and 
\item
$\lambda: V(G)\to \{1, 2, \dots, n+1\}$  and 
$\rho: \{1, 2, \dots, r\}\to V(G)$ are maps, 
\end{itemize}
which satisfy the following: 
\begin{enumerate}
\item\label{def_graph2_ends}
If we denote $\lambda^{-1}(j)$ by $V(G)_j$, 
then $V(G)_1$ consists of one vertex, 
and $\rho$ induces a bijection 
between $\{1, 2, \dots, r\}$ and $V(G)_{n+1}$. 
\item\label{def_graph2_adjacency}
$G$ has no loops. 
Every vertex $v\in V(G)_j$ is adjacent 
only to vertices of $V(G)_{j\pm 1}$. 
Write $v\rightarrowtail w$ 
when $v$ and $w\in V(G)_{j-1}$ are adjacent. 
If $j>1$, there exists a unique vertex $w$ 
with $v\rightarrowtail w$. 
If $ j\leq n$, 
there exists at least one vertex $u$ with 
$u\rightarrowtail v$. 
\item\label{def_graph2_stability}
If $j\leq n$, 
there exists a vertex $v\in V(G)_j$ 
such that $\#\{u\in V(G)| u\rightarrowtail v\}\geq 2$. 
\end{enumerate}
\end{definition}

\begin{lemma}\label{lem_configuration}
Let $[f]\in\fM_\beta$ be a relative stable morphism 
represented by a morphism $f: X\to Z[n]_0$ and a distinguished point $Q\in X$. 
Assume that $\varphi(f(X))\not\supset D$, and 
denote the connected components 
of $\overline{X\setminus (\varphi\circ f)^{-1}(D)}$ 
by $X_1, \dots, X_r$ and 
the irreducible components 
of $\overline{X\setminus(X_1\cup\dots\cup X_r)}$ 
by $X_{r+1}, \dots, X_{r+s}$. 
Let $\beta_i=(\varphi\circ f)_*([X_i])$ for $1\leq i\leq r$. 
Then we have the following. 
\begin{enumerate}
\item 
$D.\beta_i>0$ for $1\leq i\leq r$. 
\item
The morphism $(\varphi\circ f)|_{X_i}$ belongs to $\fM_{\beta_i}^\circ$ 
for $1\leq i\leq r$. 
\item
$\varphi(f(X))\cap D$ consists of one point $P$. 
\item
There exist an element $(G, \lambda, \rho)$ of $\cG_{n, r}$ 
and a bijection 
$I: V(G)\to \{1, 2, \dots, r+s\}$ 
such that the following hold. 
For brevity, write $X_v$ for $X_{I(v)}$. 
\begin{enumerate}
\item 
$f(X_v)\subset \Delta_{\lambda(v)}$ and $X_{\rho(i)}=X_i$ for $1\leq i\leq r$. 
\item
Via the correspondence $v\mapsto X_v$, 
edges of $G$ correspond to intersection points. 
\item
Define $\mu: V(G)\to\ZZ_{>0}$ inductively as follows: 
\begin{itemize}
\item
If $v=\rho(i)\in V(G)_{n+1}$, 
then $\mu(v):=D.\beta_i$. 
\item
If $\mu$ is defined on $V(G)_{j+1}$ and $v\in V(G)_j$, 
$\mu(v):=\sum_{u\rightarrowtail v}\mu(u)$. 
\end{itemize}
Let $v$ be an element of $V(G)_j$. 
If $j>1$, 
write $R_v=X_v\cap X_w$ where $w$ is the unique vertex 
with $v\rightarrowtail w$. 
Then 
\[
\hbox{$f|_{X_v}^*\bB_j=\sum_{u\rightarrowtail v}\mu(u)R_u$ 
if $j\leq n$}, 
\]
\[
\hbox{$f|_{X_v}^*\bB_{j-1} =\mu(v)R_v$ if $j>1$}, 
\]
and 
\[
\hbox{$f|_{X_v}^*D[n]=\mu(v)Q$ if $j=1$}. 
\] 
\end{enumerate}
\end{enumerate}
\end{lemma}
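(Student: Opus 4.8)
The plan is to read the structure off from the definitions in \S2, using throughout that $X$ has arithmetic genus $0$ and carries only the distinguished point $Q$, so that $X$ is a tree of $\PP^1$'s with a single marked point. First I would record the coarse picture. By nondegeneracy no component of $X$ maps into any $\bB_l$, so every irreducible component $X_\alpha$ has a well-defined \emph{level} $\lambda(X_\alpha)\in\{1,\dots,n+1\}$, namely the unique index with $f(X_\alpha)\subset\Delta_{\lambda(X_\alpha)}$. The $\Spec\CC$ form of predeformability then gives: adjacent components have levels differing by at most $1$; a node joining a level-$j$ and a level-$(j+1)$ component lies over $\bB_j$, and the pullbacks of $\bB_j$ to the two branches have equal order at it; and a node joining two components of the same level lies over none of the $\bB_l$, nor, at level $n+1$, over $D=\bB_n$. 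Finally, the hypothesis $\varphi(f(X))\not\supset D$ forces every component of level $\le n$ to map into a single fibre of $\Delta_j\to D$, so its $\varphi$-image is a point of $D$.

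The heart of the proof is the rigidity of the resulting tower, which I would establish together with part (3). Any contracted component not contained in a level-$(n+1)$ blob must lie in some fibre $\Delta_{j,p}$ with $1\le j\le n$, and its neighbours all lie at the same level; such components, if present, therefore organise into trees, and any such tree has a leaf which is a contracted $\PP^1$ with fewer than three special points, hence unstable --- so there are no contracted components outside the blobs. Consequently every non-contracted level-$1$ component surjects onto its fibre, hence meets $D_\infty=D[n]_0$, hence contains $Q$ (as $f^{-1}(D[n]_0)$ is set-theoretically $\{Q\}$); so there is a unique level-$1$ component $X_0$, with $f(X_0)=\Delta_{1,P}$ for a point $P\in D$ and $f|_{X_0}^*D_\infty=(D.\beta)Q$, i.e. total ramification over $D_\infty$ at $Q$. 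Rooting the tree at $X_0$, an induction on the level now shows: the parent of a component of level $j\ge2$ lies at level $j-1$ (otherwise that component surjects onto its fibre, hence has a downward node over $\bB_{j-1}$ leading to a child at level $j-1$ whose parent would sit at too high a level, against the inductive hypothesis); no two components of a given level are adjacent and no contracted component sits at level $\le n$; each component of level $j$ has a \emph{unique} node over $\bB_{j-1}$, namely its parent node, at which $f$ is totally ramified; and, propagating $P$ across that node using that fibres over distinct points of $D$ are disjoint, every component of level $\le n$ maps to the fibre over the \emph{same} point $P$. Combined with the corresponding assertion that each level-$(n+1)$ blob meets $D$ only at $P$, this is part (3).

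Parts (1) and (2) now follow. The $X_i$ with $i\le r$ are precisely the connected unions of level-$(n+1)$ components; by the above each $X_i$ meets $D$ only at $P$, its unique node down to level $n$ serving as distinguished point, with contact order $D.\beta_i=(\varphi\circ f)|_{X_i}^*D$ there; this number is the degree of the relevant component over a fibre of $\Delta_n$, hence positive, and removing the vertical tower below $P$ replaces one special point of $X_i$ by the marked point, so $(\varphi\circ f)|_{X_i}$ is a genus-$0$ relative stable morphism to $(Z,D)$ with full tangency, i.e. a point of $\fM_{\beta_i}^\circ$. For part (4) I would let $V(G)$ be indexed by $\{1,\dots,r+s\}$, $\lambda$ the level, $\rho(i)$ the vertex of $X_i$, and put one edge per node of $X$ not internal to a blob; the axioms of $\cG_{n,r}$ are then exactly what was proved --- unique bottom vertex, a tree with edges only between consecutive levels, a unique downward neighbour, at least one upward neighbour at levels $\le n$, and the stability clause \ref{def_graph2_stability}, which expresses that a level-$j$ piece built only of straight covers $z\mapsto z^{d}$ would be fixed by the $j$-th factor of $\bG_m^n$, contradicting finiteness of the automorphism group of $[f]$ via Proposition \ref{prop_quotient}. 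Taking $\mu(v)$ to be the degree of $X_v$ over its fibre (or $D.\beta_i$ at a blob), the additivity $\mu(v)=\sum_{u\rightarrowtail v}\mu(u)$ and the three divisor identities of (4c) drop out of the fibrewise degree balance together with the order-matching at inter-level nodes.

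I expect the genuine work to be the second paragraph: eliminating contracted and parallel components at levels $\le n$ and pinning the configuration down to a tree with no same-level edges, where the genus-$0$ constraint, the relative divisor condition propagated down to $Q$, and the finite-automorphism requirement of \cite{L1} all have to be used together. Once $(G,\lambda,\rho)$ is fixed, the remaining verifications and the bookkeeping for $\mu$ are routine.
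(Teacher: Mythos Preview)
Your overall architecture mirrors the paper's: assign levels by nondegeneracy, use predeformability at inter-level nodes, pin down a unique level-$1$ component via $Q$, and then read off $(G,\lambda,\rho)$ and the multiplicities. The divisor identities in (4c) and the stability clause (3) of Definition~\ref{def_graph2} are handled just as in the paper (the latter only says ``by stability''; your $\bG_m$-argument is the intended content).

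There is, however, a genuine gap in your elimination of contracted components at levels $\le n$. You assert that the forest of such components ``has a leaf which is a contracted $\PP^1$ with fewer than three special points.'' That is not justified: a leaf of the \emph{contracted} subtree may well have two or more non-contracted neighbours at the same level (its image is an interior point $p$ of a fibre $F\subset\Delta_j$, and any number of non-contracted level-$j$ components surjecting onto $F$ can pass through $p$), so it can have $\ge 3$ special points and no contradiction with stability arises. Your later induction then tacitly relies on this elimination (``otherwise that component surjects onto its fibre''), so the argument as ordered is circular.

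The repair is exactly the paper's ordering. First show that the union $X'$ of non-contracted pieces is \emph{connected}: from any non-contracted level-$j$ component one finds a node over $\bB_{j-1}$, and the neighbour across it cannot be contracted (a contracted component mapping to a point of $\bB_{j-1}$ would violate nondegeneracy), so one walks up through non-contracted components to the unique level-$1$ vertex. Only then does the tree property of $X$ force each connected component $X''$ of $\overline{X\setminus X'}$ to meet $X'$ in a single point; since $Q\notin X''$ one gets $\deg\omega_X|_{X''}=-1<0$, hence some irreducible component of $X''$ has fewer than three special points, contradicting stability of $f$ as a morphism to $Z[n]$. With this reordering your induction on levels and the remainder of the argument go through as written.
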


\begin{proof}
We may assume that $f(X_i)$ is a point if and only if $i>r+s'$. 
Write $X'=X_1\cup\dots \cup X_{r+s'}$. 
Let $G$ be a graph whose vertices are in one-to-one 
correspondence with $\{X_1, \dots, X_{r+s'}\}$ 
and whose edges correspond to intersection points. 
By the predeformability, 
each $f(X_v)$ is contained in exactly one of $\Delta_j$. 
Define $\lambda$ and $\rho$ so that they satisfy 
$f(X_v)\in\Delta_{\lambda(v)}$ and $X_{\rho(i)}=X_i$ for $1\leq i\leq r$. 

The assertions (4a) and (4b) are clear from the definition. 
We have to prove (1), (2), (3), (4c) 
and 
\begin{center}
(4') $X=X'$, and 

(4'') $G\in\cG_{n, r}$. 
\end{center}

\begin{claim}\label{claim_configuration}
(1)
If $\lambda(v)\leq n$, 
the image $f(X_v)$ is a fiber of $\Delta_{\lambda(v)}\to D$. 
Consequently, $f(X_v)$ intersects both ``the upper boundary'' $\bB_{\lambda(v)-1}$ or $D[n]$ and 
``the lower boundary'' $\bB_{\lambda(v)}$. 

(2)
Each of $f(X_1), \dots, f(X_r)$ intersects $\bB_n$. 
\end{claim}
\begin{proof}
(1)
This follows from the assumption that 
$X$ does not dominate $D$. 

(2)
Since $D.\beta>0$, 
$\varphi(f(X))$ intersects $D$. 
If $\varphi(f(X_i))\cap D=\emptyset$ for some $i\leq r$, then $X\not= X_i$, 
and there would be an irreducible component $Y$ 
which intersects $X_i$ in a point. 
Since $Y\cap X_i$ is not contained in $(\varphi\circ f)^{-1}(D)$, 
$Y\cup X_i$ is a connected subset of 
$\overline{X\setminus (\varphi\circ f)^{-1}(D)}$, 
which is a contradiction. 
\end{proof}
In particular, the assertion (1) holds. 

From the condition that $f^{-1}(D[n])$ consists of one smooth point $Q$, 
it follows that there is a unique vertex, say $v_1$, 
such that $f(X_{v_1})\subset\Delta_1$. 
Combining this with the definitions of $\lambda$ and $\rho$, 
we see that the condition (\ref{def_graph2_ends}) 
of Definition \ref{def_graph2} is satisfied. 

From the configuration of $Z[n]_0$, 
it is clear that $v\in V(G)_j$ can be  
adjacent only to vertices in $V(G)_{j-1}\cup V(G)_j\cup V(G)_{j+1}$. 
If $Q\in X_v$ maps to $\bB_{j-1}$(resp. $\bB_j$), 
then it is also on a component of $X_w$ 
with $w\in V(G)_{j-1}$(resp. $w\in V(G)_{j+1}$)  
by the predeformability. 
Going up, 
any component is eventually connected to $X_{v_1}$. 
Thus $G$, or equivalently $X'$, is connected. 
Furthermore, 
since $G$ does not contain loops, 
a vertex $v\in V(G)_j$ is adjacent only to 
vertices in $V(G)_{j\pm 1}$, 
and 
$v$ is adjacent to a unique vertex in $V(G)_{j-1}$ if $j>1$. 
By Claim \ref{claim_configuration}, 
$f(X_v)$ intersects $\bB_{j-1}$(resp. $\bB_j$) if $j>1$(resp. if $j\leq n$), 
so 
the condition (\ref{def_graph2_adjacency}) 
of Definition \ref{def_graph2} is satisfied.

From the connectedness of $G$ and Claim \ref{claim_configuration}, 
we see that $\varphi(f(X_v))\cap D$ 
consists of the same point $\varphi(f(Q))$. 
This is also true for $X_i$ with $i>r+s'$, hence the assertion (3). 
If $\lambda(v)=n+1$, 
then $(\varphi\circ f)|_{X_v}^{-1}(D)$ 
consists of one point $R_v$. 
It is easy to see that $\varphi\circ f|_{X_v}$ is a stable morphism to $Z$, 
so the assertion (2) holds. 

We see the assertion (4c) by a descending induction on $j$. 
If $j=n+1$, the assertion is clear. 
Assume that it is true for $j+1$. 
Then the first equality for $j$ follows from the predeformability. 
We already know that $f|_{X_v}^*\bB_{j-1}$ (resp. $f|_{X_v}^*D[n]$) 
has one point $R_v$ (resp. $Q$) as the support. 
The multiplicity is the degree of $X_v\to f(X_v)$, 
so it is equal to $\sum_{u\rightarrowtail v}\mu(u)$. 

Assume that $X\not=X'$ 
and let $X''$ be a connected component of 
$\overline{X\setminus X'}$. 
Since $p_a(X)=0$ and $X'$ is connected, 
$X'$ and $X''$ intersect at one point. 
Thus $\deg \omega_X|_{X''}<0$, which contradicts the assumption 
that $f$ is stable as a morphism to $Z[n]$. 
Thus (4') holds. 

By the stability as a relative prestable morphism to $\fZrel$, 
it follows that 
for any $j\leq n$ 
there exists a vertex $v\in V(G)_j$ which 
is adjacent to at least two vertices in $V(G)_{j+1}$. 
This is the condition (\ref{def_graph2_stability}) 
of Definition \ref{def_graph2}, 
so (4'') holds. 
\end{proof}

\begin{corollary}\label{cor_configuration} 
Let the notations and assumptions be as in Lemma \ref{lem_configuration}. 
Let $\varphi(f(X))=C_1\cup\dots\cup C_s$ 
be the irreducible decomposition, 
and assume further that $D.C_i>0$ for all $i$. 

(1)
We have $r\geq s$, 
$X_1, \dots, X_r$ are isomorphic to $\bP^1$
and $(\varphi\circ f)|_{X_i}: X_i\to Z$ 
belongs to $\fM_{(\varphi\circ f)_*[X_i]}^\circ$. 
If $(\varphi\circ f)_* X$ is reduced, 
then $r=s$ and $(\varphi\circ f)|_{X_i}$ is 
a normalization map of the image curve. 

(2)
If $s=1$ and $(\varphi\circ f)_* X=C_1$, 
then $n=0$, $X\cong\bP^1$ and $f$ is 
a normalization map of $C_1$. 

(3)
If $s=2$ and $(\varphi\circ f)_* X=C_1+C_2$, 
then $f$ is described as follows. 
Let $d_i=D.C_i$. 
\begin{itemize}
\item
$n=1$. 
\item
$X$ is a chain of smooth rational curves $X_1$, $X_0$ and $X_2$ 
in this order. 
\item
For $i=1$ or $2$, 
$C_i$ is the image of $X_i$, 
and 
$\varphi\circ f|_{X_i}: X_i\to Z$ is 
a normalization map and belongs to $\fM_{[C_i]}^\circ$. 
We have $(\varphi\circ f|_{X_i})^*D=d_iR_i$, 
where $R_i=X_i\cap X_0$. 
Furthermore, 
$C_1\cap D$ and $C_2\cap D$ are the same point $P$. 
\item
The image of $\varphi\circ f|_{X_0}: X_0\to Z$ 
is the fiber of $\Delta_1\to D$ over $P$, 
and $(\varphi\circ f|_{X_0})^*D[1]=(d_1+d_2)Q$ for some point $Q\in X_0$ 
and $(\varphi\circ f|_{X_0})^*\bB_1=d_1R_1+d_2R_2$. 
\end{itemize}
In particular, $[f]$ is determined by $C_1$ and $C_2$. 

(4)
Assume that $(K_Z+D).C_i\geq 0$ for any $i$. 
Let $M$ be the image of the natural map 
from $\fM_{f_*[X]}$ to the Chow variety of $Z$. 
Then $(\varphi\circ f)_* X$ is isolated in $M$. 

(5)
If the assumptions of (2) and (4) or (3) and (4) are satisfied, 
then $[f]$ is isolated. 
\end{corollary}
\begin{proof}
(1)
By Lemma \ref{lem_configuration}, (2), 
$(\varphi\circ f)|_{X_i}: X_i\to Z$ is 
stable as a relative prestable morphism to $\fZrel$ 
for each $i$. 
So $(\varphi\circ f)|_{X_i}^{-1}D$ consists of one smooth point, 
and our assumption implies that 
there can be only one irreducible component 
of $X_i$ on which $f$ is nonconstant. 
From the stability, 
it follows that $X_i$ is irreducible, i.e. isomorphic to $\bP^1$. 
In particular, the number $s$ of irreducible components of $\varphi(f(X))$ 
is at most $r$. 

If $(\varphi\circ f)_* X$ is reduced, 
then $r=s$ since different $X_i$ map to different $C_j$, 
and $(\varphi\circ f)|_{X_i}$ is generically injective.

(2)
We have $r=1$ from (1). 
It is easy to see that $\cG_{n, 1}$ is empty for $n>0$, 
and that $\cG_{0, 1}$ has a unique element $(G, \lambda, \rho)$ 
with $V(G)=\{v\}$, $\lambda(v)=1$ and $\rho(1)=v$. 
The assertion follows from Lemma \ref{lem_configuration} (4). 

(3)
We have $r=2$ from (1). 
It is easy to see that $\cG_{n, 2}$ is empty for $n\not=1$, 
and $\cG_{1, 2}$ has a unique element $(G, \lambda, \rho)$ 
with $V(G)=\{v, w_1, w_2\}$, $\lambda(v)=1$, $\lambda(w_i)=2$ and 
$\rho(i)=w_i$. 
The assertion follows from Lemma \ref{lem_configuration} (4). 

(4)
Assume that $(\varphi\circ f)_*X$ deforms in $M$. 
Our assumptions hold also for small deformations, 
and we may assume that 
there is an irreducible component $C_i$ that really moves. 
By (1), we have a smooth curve $B$, 
a ruled surface $\pi: S\to B$ 
and a dominant morphism $\tilde{f}: S\to Z$ 
such that $D':=(\tilde{f}^{-1}D)_{\rm red}$ is a section over $B$ 
and the image of a ruling is in the class of $C_i$. 
We have $K_S+D'\geq \tilde{f}^*(K_Z+D)$, 
and if $F$ is a general fiber of $\pi$, 
we have 
\[
-1=(K_S+D').F\geq (K_Z+D).\tilde{f}_*F\geq 0, 
\]
a contradiction. 

(5) 
The image cycle does not deform by (4), 
and the relative stable morphism is determined by its image 
by (2) and (3). 
\end{proof}

\subsection{Preparation}

Now let the notations and assumptions 
be as in Theorem \ref{theorem_main}. 
We write $\beta$ for $\beta_1+\beta_2$ 
and assume that $d_1\leq d_2$. 

\begin{lemma}
There is a unique point $[f]\in\fM_\beta$ 
with ${\rm Im}(\varphi\circ f)=C_1\cup C_2$. 
It is isolated and has no nontrivial automorphisms, 
hence $\fM_\beta$ is a scheme of finite length at $[f]$. 
\end{lemma}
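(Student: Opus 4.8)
The plan is to prove existence, uniqueness, rigidity of the image curve, and the absence of automorphisms, essentially by combining Corollary \ref{cor_configuration} with an elementary analysis of the combinatorial data. First I would invoke Corollary \ref{cor_configuration}(3): any $[f]\in\fM_\beta$ with $\mathrm{Im}(\varphi\circ f)=C_1\cup C_2$ must have $\varphi(f(X))$ not containing $D$ (true since $C_1,C_2$ each meet $D$ properly by hypothesis (1)), and $D.C_i=d_i>0$, so Lemma \ref{lem_configuration} and its corollary apply with $s=2$ and image cycle $C_1+C_2$. Hypothesis (4) — that $f_i$ is immersive and $(C_1.C_2)_P=d_1=\min\{d_1,d_2\}$ — guarantees that $C_1\cup C_2$ is a genuine nodal-type configuration at $P$ (the two branches of the image at $P$, counted with the tangency data, are as transverse as possible), so there is no obstruction to the predeformable picture; the unique combinatorial type is the chain $X_1$–$X_0$–$X_2$ with $n=1$ described in Corollary \ref{cor_configuration}(3). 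Thus $[f]$, if it exists, is uniquely determined by the ordered pair $(C_1,C_2)$, which gives uniqueness.

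Next I would verify existence: one must check that the data prescribed by Corollary \ref{cor_configuration}(3) actually assembles into a well-defined point of $\fM_\beta$, i.e. that the maps $\varphi\circ f|_{X_i}\colon X_i\to Z$ (the normalizations of $C_i$, which lie in $\fM_{\beta_i}^\circ$ by hypothesis (1) and (2)) together with the map $\varphi\circ f|_{X_0}$ onto the fiber of $\Delta_1\to D$ over $P$ glue along $R_1,R_2$ into a predeformable stable morphism to $Z[1]_0$. The only genuine point to check is that the predeformability condition (matching of orders of $\bB_1$ on the two branches at each node) holds: on $X_i$ the pullback of $\bB_1$ has order $d_i$ at $R_i$, and on $X_0$ the pullback of $\bB_1$ is $d_1R_1+d_2R_2$, so the orders match at $R_1$ and at $R_2$; one also needs $X_0\to\Delta_1$ to have the two marked points $R_1,R_2,Q$ distinct, which is automatic since the map onto a fiber $\cong\PP^1$ has degree $d_1+d_2$ and the three points are the preimages of the three distinguished points $D_0$, $D_\infty$ of $\Delta$ together with a point of $\bB_1$ — here immersiveness of the $f_i$ together with the tangency hypothesis ensures the gluing points are smooth points of $X_i$, so stability of each piece and hence of the whole is clear.

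For rigidity: hypothesis (2) gives $(K_Z+D).\beta_i=0\geq 0$, so Corollary \ref{cor_configuration}(4) applies and $(\varphi\circ f)_*X=C_1+C_2$ is isolated in the image $M$ in the Chow variety; combined with uniqueness this gives that $[f]$ is isolated in $\fM_\beta$, by Corollary \ref{cor_configuration}(5). Finally, for the automorphism statement: an automorphism of $[f]$ is a pair $(r_1,r_2)$ with $r_2$ an automorphism of $Z[1]$ over $Z\times\Spec\CC$ and $r_1$ an automorphism of $X=X_1\cup X_0\cup X_2$ compatible with $Q$ and with $f$. Since $\varphi\circ f|_{X_i}$ is birational onto $C_i$ and $C_1\neq C_2$, $r_1$ preserves each $X_i$ and acts as the identity on $X_1$ and $X_2$ (a birational morphism from $\PP^1$ has no nontrivial automorphisms commuting with it), hence fixes $R_1,R_2\in X_0$; as $\varphi\circ f|_{X_0}$ also fixes $Q$, and a self-map of $\PP^1$ fixing three points is the identity, $r_1=\mathrm{id}$; then $r_2$ restricted to $\Delta_1$ fixes $f(X_0)$ pointwise and is an automorphism of the $\bG_m$-bundle structure, which by the description of $Z[1]$ forces $r_2=\mathrm{id}$ as well (the $\bG_m$-scaling is already quotiented out in $\fZrel$). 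Hence $\mathrm{Aut}([f])$ is trivial, and $\fM_\beta$ is a finite-length scheme at $[f]$.

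The main obstacle I anticipate is the existence/gluing step — specifically, checking carefully that the prescribed local model at the nodes $R_1,R_2$ is predeformable in the scheme-theoretic sense of Definition \ref{def_graph} (pure contact of the right order) rather than merely set-theoretically correct, and tracking that hypothesis (4) is exactly what is needed for the three special points on $X_0$ to be distinct and for the branches of $C_1\cup C_2$ at $P$ to be smooth; the rest is a mechanical application of the corollary.
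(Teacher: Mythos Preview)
Your approach is essentially the same as the paper's: both arguments invoke Corollary~\ref{cor_configuration}(3) and (5) for uniqueness and isolation, check triviality of automorphisms via $C_1\neq C_2$ (so $r_1$ preserves components), generic injectivity of $\varphi\circ f|_{X_i}$, and the three fixed points $R_1,R_2,Q$ on $X_0$, and finally construct $f$ explicitly for existence.

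One minor correction: you anticipate that hypothesis~(4) (immersiveness and $(C_1.C_2)_P=\min\{d_1,d_2\}$) is needed to ensure the three special points on $X_0$ are distinct and the branches at $P$ are smooth. In fact the paper observes the opposite: for \emph{this} lemma one may replace hypothesis~(4) entirely by the weaker assumption $C_1\neq C_2$. The points $R_1,R_2,Q\in X_0$ are distinct simply because $R_1,R_2$ are the two nodes of the chain $X_1\cup X_0\cup X_2$ and $Q$ is a smooth marked point; the map $X_0\to(\text{fiber of }\Delta_1)$ with profile $(d_1+d_2)$ over $D_\infty$ and $(d_1,d_2)$ over $D_0$ always exists on $\PP^1$ regardless of how $C_1,C_2$ meet. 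Hypothesis~(4) only enters later in the paper, in the multiplicity computation. Also, your phrase ``the $\bG_m$-scaling is already quotiented out in $\fZrel$'' is slightly off: rather, once $r_1=\mathrm{id}$, the relation $r_2\circ f=f$ forces $r_2$ to fix the fiber $f(X_0)\subset\Delta_1$ pointwise, and since the only automorphisms of $Z[1]_0$ over $Z$ come from the $\bG_m$-action on $\Delta_1$, this forces $r_2=\mathrm{id}$.
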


\begin{proof}
In fact, one may replace 
the assumption $(C_1.C_2)_P=\min\{D.C_1, D.C_2\}$ 
by $C_1\not=C_2$ in this lemma.

If $f$ satisfies the condition, 
then $(\varphi\circ f)_*X=C_1+C_2$ 
and Corollary \ref{cor_configuration} (3) and (5) imply that 
$[f]$ is unique and isolated. 
Consider an automorphism of $f$ 
formed by $r_1: X\to X$ and $r_2: Z[1]_0\to Z[1]_0$. 
Since $C_1\not=C_2$, 
the automorphism $r_1$ preserves the irreducible components. 
For $i=1$ and $2$, $\varphi\circ f|_{X_i}$ is generically injective 
and so $r_1|_{X_i}$ is the identity map. 
The automorphism $r_1|_{X_0}$ preserves 
$R_1$, $R_2$ and $Q$, 
so it is the identity map. 
It follows that $r_2$ is also the identity map. 

Conversely, it is easy to see that one can construct $f$ 
as in Corollary \ref{cor_configuration}, 
and that it belongs to $\fM_\beta$. 

\end{proof}

Write $\fM$ for the one-point scheme $[f]\in\fM_\beta$. 
We will study the structure of $\fM$ 
by an explicit calculation. 
As a preparation, 
let us fix coordinate systems. 

Let $(w_0, w_1)$ be \'etale coordinates 
on an affine open neighborhood $W$ of $P$ 
such that $w_1=0$ defines $D$ in $W$. 

Let $w_2$ be an inhomogeneous coordinate on $\PP^1$, 
$w_3=(w_2)^{-1}$ the coordinate near the point at infinity. 
As before, let $t$ denote the coordinate on $\bA^1$. 
Define \[
W[1]=(w_1w_2=t)\cup (w_1=tw_3) \subset W\times\PP^1\times\bA^1. 
\]
Then $W[1]$ can be patched with $(Z\setminus D)\times\bA^1$, 
and the union can be considered as an open neighborhood 
of $f(X)$ in $Z[1]$. 

In order to deal with the deformations of $X$, 
we make it stable by 
adding ordinary marked points. 
The moduli scheme $\cM_{0, 5}$ 
of $5$-marked genus $0$ stable curves 
has a point corresponding to our curve $X=X_1\cup X_0\cup X_2$, 
with markings $P_1, P_2\in X_1$, $Q\in X_0$ and $P_3, P_4\in X_2$. 
We have the following description 
for the formal neighborhood $\cM$ of $[X]\in\cM_{0, 5}$, 
the universal curve $\cX$ over $\cM$ 
and the marked sections $\cQ$ and $\cP_i$ extending $Q$ and $P_i$. 

\begin{enumerate}
\item
$\cM$ is a formal $2$-disk with coordinates $\mu_1$ and $\mu_2$. 
\item
$\cX=\cU'_1\cup\cU_1\cup\cU_2\cup\cU'_2$, 
where
\begin{itemize}
\item 
$\cU'_i=\bA^1_\cM=\{(\z{i}{0}, \mu_1, \mu_2)\}$, 
\item
$\cU_i=\{(\z{i}{1}, \z{i}{2}, \mu_1, \mu_2))| \z{i}{1}\z{i}{2}=\mu_i\}
\subset\bA^2_\cM$, 
\item
$\{\z{i}{0}\not=0\}\subset\cU'_i$ and $\{\z{i}{1}\not=0\}\subset\cU_i$ are patched 
by $\z{i}{0}=1/\z{i}{1}$, and 
\item
$\{\z{1}{2}\not=0\}\subset\cU_1$ and $\{\z{2}{2}\not=0\}\subset\cU_2$ are patched 
by $\z{1}{2}=1/\z{2}{2}$
(and $\z{1}{1}=\mu_1\z{2}{2}$, $\z{2}{1}=\mu_2\z{1}{2}$). 
\end{itemize}
\item
The marked sections are 
$\cQ: \z{1}{2}=\z{2}{2}=1$ on $\cU_1\cap\cU_2$, 
$\cP_1: \z{1}{0}=0$ on $\cU'_1$, 
$\cP_2: \z{1}{0}=\z{1}{1}=1$ on $\cU'_1\cap\cU_1$, 
$\cP_3: \z{2}{0}=\z{2}{1}=1$ on $\cU'_2\cap\cU_2$ 
and 
$\cP_4: \z{2}{0}=0$ on $\cU'_2$. 
\end{enumerate}

Let $0$ be the closed point of $\cM$ and 
denote the fiber $(\cU_i)_0$ by $U_i$, 
$(\cU'_i)_0$ by $U'_i$, 
$\cQ(0)$ by $Q$, etc. 

Write $X_i^\circ$ for $X\setminus R_i$ 
and $\cX_i^\circ$ for the corresponding 
open subspace of $\cX$. 

Denote $f^{-1}(W)$ by $V$. 
We may assume that $P_1, P_4\not\in V$. 
Write 
\begin{eqnarray*}
V_0 & = & U_1\cap U_2, \\
V_1 & = & V\cap(U_1\setminus Q), \\
V_2 & = & V\cap(U_2\setminus Q), 
\end{eqnarray*}
and denote the corresponding open subspaces of $\cX$ 
by $\cV$ and $\cV_i$. 

Let $A$ be the coordinate ring of 
the affine variety 
\[
\{(p, w_2, t)\in W\times\bA^1\times\bA^1| w_1(p)w_2=t\}\subset W[1]. 
\]
For $i=1$ or $2$, 
$f|_{V_i}$ corresponds to 
a ring homomorphism $A\to \cO_X(V_i)$ satisfying 
\begin{eqnarray*}
w_0 & \mapsto & \f{A}{i}(\z{i}{1}), 
\qquad \f{A}{i}(\z{i}{1})\in\cO_{X_i}(V\cap X_i), \\
w_1 & \mapsto & \z{i}{1}^{d_i}\f{B}{i}(\z{i}{1}), 
\qquad \f{B}{i}(\z{i}{1})\in\cO_{X_i}(V\cap X_i), \\
w_2 & \mapsto & \sigma_i\z{i}{2}^{d_i}\f{C}{i}(\z{i}{2}), 
\qquad \f{C}{i}(\z{i}{2})=\frac{1}{(1-\z{i}{2})^{d_1+d_2}}, 
\end{eqnarray*}
where $\sigma_1=1$ and $\sigma_2=(-1)^{d_1+d_2}$. 
By the assumptions, 
we have $\f{A}{1}(0)=\f{A}{2}(0)$ and 
$\f{B}{i}(0)$ is nowhere zero. 
We have $\f{A}{i}'(0)\not=0$ if $d_i>1$, 
and we assume the same also when $d_i=1$ 
by making a coordinate change. 

Let $A'$ be the coordinate ring of the affine variety 
\[
\{(p, w_3, t)\in W\times\bA^1\times\bA^1| w_1(p)=tw_3\}\subset W[1]. 
\]
Then $f|_{V_0}$ is given by 
a ring homomorphism $A'\to \cO_X(V_0)$ with 
\[
w_0\mapsto 0, w_1\mapsto 0, 
w_3\mapsto \frac{(1-\z{1}{2})^{d_1+d_2}}{\z{1}{2}^{d_1}}. 
\]

\begin{lemma}
For $1\leq j\leq 4$, 
let $H_j\subset Z[1]$ be a hypersurface 
which intersects $f(X)$ transversally at $f(P_j)$. 
Take a general section $\cQ':\cM\to\cU_1\cap\cU_2$, 
denote $\cQ'(0)$ by $Q'$ 
and let $H'$ be the hypersurface $w_2 = w_2(f(Q'))$. 

Let $\tilde{\fM}$ be the completion of 
$\fM((Z[1], D[1]), \Gamma_{\beta, 4})^{st}$ at $[(f, P_1, \dots, P_4)]$, 
$\tilde{\fM}'$ the subspace of $\tilde{\fM}$ 
formed by morphisms which map $\cP_i$ to $H_i$, 
$\tilde{\fM}''$ the subspace of $\tilde{\fM}'$ 
formed by morphisms which map $\cQ'$ to $H'$. 

Then $\tilde{\fM}\to\fM$ and $\tilde{\fM}'\to\fM$ are smooth 
of relative dimensions $5$ and $1$, 
and $\tilde{\fM}''\to\fM$ is an isomorphism. 
\end{lemma}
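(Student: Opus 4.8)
The plan is to unwind the definition of $\fM((Z[1],D[1]),\Gamma_{\beta,4})^{st}$ via Proposition \ref{prop_quotient}: it is a locally closed substack of the space $\fM(Z[1],\Gamma_{\beta,4})$ of ordinary stable maps to the fixed target $Z[1]_0$ with the prescribed dual-graph combinatorics, and near $[(f,P_1,\dots,P_4)]$ this ordinary stable-map space is unobstructed because the domain components $X_1,X_0,X_2$ are rational, $f$ restricted to each is immersive onto a curve with $(K_Z+D).\beta_i=0$ (so $\varphi\circ f|_{X_i}^*T_Z$ is globally generated of the expected degree), and the gluing/tangency conditions cut out a smooth subspace. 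Concretely, I would use the presentation of $\cM$, $\cX$, and the sections $\cQ,\cP_i$ recorded just above, together with the explicit local expressions for $f$ in the charts $V_0,V_1,V_2$ in terms of $\f{A}{i},\f{B}{i},\f{C}{i}$, to write down the universal deformation of the pair (domain curve, map) over a formal base: deform $[X]\in\cM_{0,5}$ by the two smoothing parameters $\mu_1,\mu_2$, and deform the map by perturbing the power series $\f{A}{i},\f{B}{i}$ (and the node-matching data) subject to predeformability. The output of this bookkeeping is a smooth formal scheme $\tilde\fM$ of dimension $\dim\fM+5$ mapping smoothly to $\fM$ with $5$-dimensional fibers — the $5$ coming from the $5$ marked points on a genus-$0$ target-fixed family, equivalently from $H^0$ of the normalization bundle after imposing the gluing conditions.

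Next I would cut down by the incidence conditions. Imposing that $\cP_1$ maps to $H_1$ and $\cP_4$ maps to $H_4$ (these points lie on $X_1$ resp. $X_2$, away from $V$) pins down the parametrization of $X_1$ and $X_2$ up to the tangency point, using immersivity of $\varphi\circ f|_{X_i}$ and transversality of $H_j$ at $f(P_j)$; imposing $\cP_2\to H_2$, $\cP_3\to H_3$ uses up two more parameters. Each of these four conditions is a single smooth equation cutting the relative dimension down by one, because transversality of $H_j$ to $f(X)$ at $f(P_j)$ makes the corresponding evaluation map a submersion along the fiber direction; this is where one checks that the four conditions are independent, i.e. that they kill four of the five fiber directions and none of the $\fM$-directions. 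That leaves $\tilde\fM'\to\fM$ smooth of relative dimension $1$, the remaining fiber coordinate being (after the above normalizations) the position of $\cQ'$ along the component $X_0$ — geometrically, the residual $\bG_m$-reparametrization of the bubble $\Delta_1$ that has not yet been rigidified, now rendered a genuine moduli direction by the extra marked point $\cQ'$. Finally, imposing $\cQ'\to H'$ with $H'=\{w_2=w_2(f(Q'))\}$ a general level set of the fiber coordinate $w_2$ on $\Delta_1$ is exactly the condition that fixes this last parameter, and since $Q'$ was chosen general the corresponding equation is a unit times the coordinate, so $\tilde\fM''\to\fM$ is an isomorphism.

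The main obstacle I anticipate is organizing the deformation theory cleanly enough that the three smoothness/isomorphism claims fall out simultaneously rather than being checked ad hoc: one must verify that the full deformation space of $(f,\cX,\cQ,\cP_j)$ as a stable map to the \emph{fixed} $Z[1]_0$ (before quotienting by $\bG_m$ and before the $\fZrel$-expansion is allowed to vary) is unobstructed, and that the predeformability locus is smooth of the expected codimension inside it. The delicate point is the node $R_1$ (where $d_1=\min$): predeformability forces the orders of vanishing of $w_1$ on the two branches at $R_1$ to agree, and when $d_1$ could a priori be $1$ one needs the coordinate change ensuring $\f{A}{i}'(0)\neq0$ to guarantee the relevant Jacobian does not degenerate. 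Once the universal family is written down in the charts already set up, the five fiber coordinates are visibly $H^0$ of an ample-enough line bundle on a rational curve, so unobstructedness is automatic; the real work is checking that the four $H_j$-conditions and the $H'$-condition form a regular sequence transverse to the $\fM$-directions, which I would do by direct computation of the evaluation differentials at $[f]$ using the explicit power-series normal forms — a routine but not entirely short linear-algebra check that I would not carry out in full here.
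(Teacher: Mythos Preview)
Your approach misses the key simplification and contains a genuine error. The paper's proof is three sentences: by Proposition~\ref{prop_quotient}, the map $\fM((Z[1],D[1]),\Gamma_\beta)^{st}/\bG_m\to\fM_\beta$ is \'etale, and since $f$ has no automorphisms it is an isomorphism at $[f]$; the forgetful map $\fM((Z[1],D[1]),\Gamma_{\beta,4})^{st}\to\fM((Z[1],D[1]),\Gamma_\beta)^{st}$ is smooth of relative dimension $4$; hence $\tilde\fM\to\fM$ is smooth of relative dimension $4+1=5$, the extra $1$ coming from the $\bG_m$-torsor before quotienting, not from a fifth marked point as you write. The four $H_j$-conditions then cut exactly the four forgetting-marked-points fiber directions (transversality of $H_j$ to $f(X)$ makes this immediate), and since $\bG_m$ acts faithfully on $f(Q')$ the $H'$-condition kills the remaining fiber direction. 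No deformation-theoretic computation is needed; the explicit power-series work you outline is what occupies the \emph{subsequent} lemmas, where it is used to determine $\fM$ itself.

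More seriously, your assertion that $\tilde\fM$ is a \emph{smooth} formal scheme, justified by ``unobstructedness is automatic'', is false in general and would contradict the main theorem. The lemma only claims that $\tilde\fM\to\fM$ is smooth, not that $\tilde\fM$ is smooth; and the entire point of the paper is that $\fM\cong\Spec\CC[s]/(s^{d_1})$, so $\tilde\fM$, being smooth of relative dimension $5$ over $\fM$, is non-reduced whenever $d_1>1$. Your hope that ``the predeformability locus is smooth of the expected codimension'' is exactly what fails: the obstructions producing the length-$d_1$ structure on $\fM$ are already present in $\tilde\fM$. Attempting to deduce relative smoothness of $\tilde\fM\to\fM$ from absolute smoothness of $\tilde\fM$ therefore cannot succeed, and the linear-algebra check you propose at the end would, if carried out honestly, reveal the obstruction rather than confirm smoothness.
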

\begin{proof}
By Proposition \ref{prop_quotient}, 
$\fM((Z[1], D[1]), \Gamma_\beta)^{st}/\bG_m\to\fM_\beta$ is \'etale at $[f]$. 
Since $f$ has no nontrivial automorphisms, 
it is an isomorphism at $[f]$. 

At $[f, P_1, \dots, P_4]$, the morphism 
$\fM((Z[1], D[1]), \Gamma_{\beta, 4})^{st}\to\fM((Z[1], D[1]), \Gamma_\beta)^{st}$ 
is smooth of relative dimension $4$, 
corresponding to the degree of freedom of marked points. 
Thus $\tilde{\fM}\to\fM$ is smooth of 
relative dimension $5$, 
and $\tilde{\fM}'\to\fM$ is smooth 
of relative dimension $1$. 

Since $\bG_m$ acts faithfully on $f(Q')$, 
$\tilde{\fM}''\to\fM$ is an isomorphism. 
\end{proof}

So, we study how $(f, P_1, \dots, P_4)$ deforms in $\tilde{\fM}$. 
Note that infinitesimal deformations of $f|_V$ 
can be given by lifting $w_i$, 
for $w_0, w_1$ are \'etale coordinates on $W$.

\subsection{Calculation}

Consider $S=S_n:=\Spec \CC[s]/(s^{n+1})$. 
This will be sufficient 
since the tangent space to $\fM$ 
is at most $1$-dimensional 
as we will see later. 

A deformation of $[f, P_1, \dots, P_4]\in\tilde{\fM}$ over $S$ 
is given by morphisms $\mu: S\to\cM$, $\tau: S\to\bA^1$ 
and $\tilde{f}: \cX_S\to Z[1]_S$. 
Let $\mu$ and $\tau$ 
be given by 
$\mu_i=\sum_{k=1}^n \m{i}{k}s^k$ 
and $t=\sum_{k=1}^n t^{(k)}s^k$, 
where we denote the residue class of $s$ by the same symbol. 
For brevity, we write $\cX$ for $\cX_S$, etc.

Let $\w{i}{j}=\tilde{f}^*w_j|_{\cV_i}$. 
From the predeformability, 
one can write 
\[
\w{i}{1}=\z{i}{1}^{d_i}\fn{B}{i}, 
\w{i}{2}=\sigma_i \z{i}{2}^{d_i}\fn{C}{i}
\]
where $\fn{B}{i}$ and $\fn{C}{i}$ are elements of $\cO_{\cX}(\cV_i)$ 
satisfying $\fn{B}{i}\fn{C}{i}\in\CC[s]/(s^{n+1})$.  
Also write $\fn{A}{i}=\w{i}{0}$. 

We expand $\fn{A}{i}$, $\fn{B}{i}$ and $\fn{C}{i}$ as 
 \begin{eqnarray*}
\fn{A}{i} & = & \sum_{k=0}^n s^k\left\{
\fnl{A}{i}{0}{k}+\z{i}{1}\fnl{A}{i}{1}{k}(\z{i}{1})+\z{i}{2}\fnl{A}{i}{2}{k}(\z{i}{2})
\right\} \\
\fn{B}{i} & = & \sum_{k=0}^n s^k\left\{
\fnl{B}{i}{0}{k}+\z{i}{1}\fnl{B}{i}{1}{k}(\z{i}{1})+\z{i}{2}\fnl{B}{i}{2}{k}(\z{i}{2})
\right\} \\
\fn{C}{i} & = & \sum_{k=0}^n s^k\left\{
\fnl{C}{i}{0}{k}+\z{i}{1}\fnl{C}{i}{1}{k}(\z{i}{1})+\z{i}{2}\fnl{C}{i}{2}{k}(\z{i}{2})
\right\} 
\end{eqnarray*}
with 
\begin{eqnarray*}
\fnl{A}{i}{0}{k}, \fnl{B}{i}{0}{k}, \fnl{C}{i}{0}{k} & \in & \CC, \\
\fnl{A}{i}{1}{k}(\z{i}{1}), \fnl{B}{i}{1}{k}(\z{i}{1}), \fnl{C}{i}{1}{k}(\z{i}{1})
 & \in & \cO_{X_i}(V_i\cap X_i)=\cO_{X_i}(V\cap X_i), \\
\fnl{A}{i}{2}{k}(\z{i}{2}), \fnl{B}{i}{2}{k}(\z{i}{2}), \fnl{C}{i}{2}{k}(\z{i}{2})
 & \in & \cO_{X_0}(V_i\cap X_0)=\cO_{X_0}((U_i\cap X_0)\setminus Q). 
\end{eqnarray*}

Let us write down the conditions. 

\begin{lemma}\label{lem_conditions}
The above data give a family in $\tilde{\fM}$ 
if and only if the following hold. 
\begin{enumerate}
\item\label{central_fiber}
(Central fiber)\qquad
Restricting to $s=0$, we have $f$. 
\item\label{w0_on_V0}
($w_0$ on $\cV_0$)\qquad
$\w{1}{0}|_{\cV_0\setminus Q}=\w{2}{0}|_{\cV_0\setminus Q}$, 
and it extends to $Q$. 
\item\label{w2_on_V0}
($w_2$ on $\cV_0$)\qquad
$\w{1}{2}|_{\cV_0\setminus Q}=\w{2}{2}|_{\cV_0\setminus Q}$, 
and its inverse extends to a function on $\cV_0$ 
with vanishing order $d_1+d_2$ at $Q$. 
\item\label{w0w1_on_X1X2}
($w_0$ and $w_1$ on $\cX_1^\circ$, $\cX_2^\circ$)\qquad
The morphism $\cV\cap \cX_i^\circ\to W$ given by 
$\w{i}{0}$ and $\w{i}{1}$ 
extends to a morphism $\cX_i^\circ\to Z$. 
\item\label{w1_on_V0_w2_on_X1X2}
($w_1$ on $\cV_0$, $w_2$ on $\cX_1^\circ$, $\cX_2^\circ$)\qquad
$\w{1}{1}|_{\cV_0\setminus Q}=\w{2}{1}|_{\cV_0\setminus Q}$, 
and it extends to $Q$. 
The function $\w{i}{2}|_{\cV\cap \cX_i^\circ}$ 
extends to a function on $\cX_i^\circ$. 
\item\label{mapped_into_Z1}
(Mapped into $Z[1]$)\qquad
$\w{1}{1}\w{1}{2}=\w{2}{1}\w{2}{2}=t$. 
\item\label{predeformability}
(Predeformability)\qquad
$\fn{B}{i}\fn{C}{i}\in\CC[s]/(s^{n+1})$. 
\end{enumerate}
\end{lemma}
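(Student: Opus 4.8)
The plan is to translate each defining condition for a member of $\tilde{\fM}$ — i.e. for an $S$-family of relative stable morphisms to $(Z[1], D[1])$ of type $\Gamma_{\beta, 4}$ passing through the given point — into the explicit local statements (\ref{central_fiber})--(\ref{predeformability}), working chart by chart over the covering $\cX = \cU'_1\cup\cU_1\cup\cU_2\cup\cU'_2$ and over the model $W[1] = (w_1w_2 = t)\cup(w_1 = tw_3)$. The key observation is that since $w_0, w_1$ are \'etale coordinates on $W$ and $w_2$ (together with $w_3 = w_2^{-1}$) parametrises the $\PP^1$-direction of $\Delta_1$, specifying the morphism $\tilde f$ on $\cV_i$ amounts exactly to specifying the pullbacks $\w{i}{0}, \w{i}{1}, \w{i}{2}$; the whole content is then the compatibility of these pullbacks where the charts overlap, plus the global extension requirements away from $V$, plus predeformability at the two nodes $R_1, R_2$.

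First I would fix which chart computes which piece of $f(X)$: $\cV_1$ (resp. $\cV_2$) maps into the $Z$-part together with the fibre $\Delta_1$ via $w_1w_2 = t$, while $\cV_0 = \cU_1\cap\cU_2$ is the locus near $Q$ where, in the model $W[1]$, one must pass to the chart $w_1 = tw_3$; this is where condition (\ref{w2_on_V0}) comes from, namely that $w_2^{-1} = w_3 = (1-\z{1}{2})^{d_1+d_2}\z{1}{2}^{-d_1}$ is the regular function near $Q$ with a zero of order $d_1+d_2$ there (matching $f^*D[1] = (d_1+d_2)Q$ from Corollary \ref{cor_configuration}(3)). Condition (\ref{central_fiber}) is the requirement that setting $s = 0$ recovers the base point $[f, P_1,\dots,P_4]$. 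Conditions (\ref{w0_on_V0}), (\ref{w1_on_V0_w2_on_X1X2}) (first part) and (\ref{w2_on_V0}) (first part) are simply that the three pullbacks agree on the overlap $\cV_0\setminus Q$ where $\cU_1$ and $\cU_2$ are glued by $\z{1}{2} = 1/\z{2}{2}$, so that they glue to a morphism defined near $Q$; condition (\ref{w0w1_on_X1X2}) and the second parts of (\ref{w1_on_V0_w2_on_X1X2}) are the requirement that $\tilde f$ is defined on all of $\cX$, not merely on $\cV$ — i.e. the functions produced from the local data extend over $R_1, R_2$ and over the marked-point regions — which encodes the \emph{stability} and the target condition on classes. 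Condition (\ref{mapped_into_Z1}) is the statement that $\tilde f$ lands in $Z[1]$ and is an $\bA^1$-morphism over the given $\tau: S\to\bA^1$, i.e. the relation $w_1w_2 = t$ is respected in both $\cV_1$ and $\cV_2$; and (\ref{predeformability}) is precisely the pure-contact condition at the two nodes $R_i$, which with the normalisation $\w{i}{1} = \z{i}{1}^{d_i}\fn{B}{i}$, $\w{i}{2} = \sigma_i\z{i}{2}^{d_i}\fn{C}{i}$ reduces to $\fn{B}{i}\fn{C}{i}\in\CC[s]/(s^{n+1})$ (a unit times a scalar in $s$).

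For the "only if" direction I would start from an arbitrary family in $\tilde{\fM}$, restrict the morphism $\cX\to\cZ$ to each of the overlapping affine charts, and read off (\ref{central_fiber})--(\ref{predeformability}) directly from the definitions of predeformable/relative stable morphism and from the fact that $f^{-1}D[1] = (d_1+d_2)\cQ$; the pure-contact normal form at the nodes forces the shape $\z{i}{1}^{d_i}\fn{B}{i}$, $\z{i}{2}^{d_i}\fn{C}{i}$ after possibly swapping branches, and the sign $\sigma_i$ is pinned down by matching with the base morphism $f$ where $\f{C}{i} = (1-\z{i}{2})^{-(d_1+d_2)}$. For the "if" direction I would check that, given data satisfying (\ref{central_fiber})--(\ref{predeformability}), the functions glue to a genuine $\bA^1$-morphism $\tilde f: \cX\to Z[1]_S$ carrying $\cP_j$ and $\cQ$ to the prescribed sections, that $\tilde f^{-1}D[1] = (d_1+d_2)\cQ$ as Cartier divisors (from (\ref{w2_on_V0})), that the images of the components have the right classes (from (\ref{w0w1_on_X1X2}) and the degree bookkeeping of the $d_i$), and that predeformability holds (from (\ref{predeformability})) — hence the quadruple is a relative stable morphism of the required type, and the deformation stays in the formal neighbourhood $\tilde{\fM}$ because it reduces to $f$ at $s = 0$. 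The main obstacle I anticipate is purely bookkeeping rather than conceptual: keeping the three coordinate patches and the two model charts $w_1w_2 = t$ versus $w_1 = tw_3$ consistent, and in particular verifying carefully that the extension conditions across $R_1, R_2$ and near $Q$ are equivalent to the algebraic conditions as stated (this is where the chosen expansions of $\fn{A}{i}, \fn{B}{i}, \fn{C}{i}$ into constant, $\z{i}{1}$- and $\z{i}{2}$-divisible parts will be used in the subsequent calculation), so I would be careful to state each equivalence as a genuine "iff" rather than just a necessary condition.
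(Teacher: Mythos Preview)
The paper states this lemma without proof; it is treated as a direct transcription of the definition of an $S$-family of relative stable morphisms to $(Z[1],D[1])$ into the explicit local coordinates that were set up just before. Your proposal to unpack the definitions chart by chart is exactly that transcription, and the outline is correct.

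Two small corrections to your attributions, neither of which affects the argument. First, condition (\ref{w0w1_on_X1X2}) is purely the requirement that the morphism to $Z$ extend from $\cV\cap\cX_i^\circ$ to all of $\cX_i^\circ$ (i.e.\ across the portion of $X_i$ lying outside $W$); it does not encode stability or the class condition. Stability (finiteness of automorphisms) is an open condition and holds automatically once the central fibre is stable, and the class $b(v)=\beta$ is locally constant in flat families over the Artinian base $S$, so both follow from (\ref{central_fiber}) alone. Second, in $\tilde{\fM}$ there is no constraint on where the ordinary marked sections $\cP_j$ land; those constraints enter only for $\tilde{\fM}'$ and $\tilde{\fM}''$. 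The condition $\tilde f^{-1}D[1]=(d_1+d_2)\cQ$ is indeed what (\ref{w2_on_V0}) captures, as you say.
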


\subsection{Central fiber}

Setting $s=0$, the condition (\ref{central_fiber}) 
of Lemma \ref{lem_conditions} is equivalent to 
\begin{eqnarray*}
\fnl{A}{i}{0}{0}+\z{i}{1}\fnl{A}{i}{1}{0}(\z{i}{1})+\z{i}{2}\fnl{A}{i}{2}{0}(\z{i}{2}) 
& = & \f{A}{i}(\z{i}{1}), \\
\fnl{B}{i}{0}{0}+\z{i}{1}\fnl{B}{i}{1}{0}(\z{i}{1}) & = & \f{B}{i}(\z{i}{1}), \\
\fnl{C}{i}{0}{0}+\z{i}{2}\fnl{C}{i}{2}{0}(\z{i}{2}) & = & \f{C}{i}(\z{i}{2}). 
\end{eqnarray*}

\subsection{Predeformability modulo $s$}

Modulo $s$, we have
\begin{eqnarray*}
\fn{B}{i}\fn{C}{i} & \equiv &
\f{B}{i}(\z{i}{1})\f{C}{i}(\z{i}{2}) 
+ \z{i}{2}\fnl{B}{i}{2}{0}(\z{i}{2})\f{C}{i}(\z{i}{2})
+\f{B}{i}(\z{i}{1})\z{i}{1}\fnl{C}{i}{1}{0}(\z{i}{1}) \\
& \equiv & 
\f{B}{i}(\z{i}{1})\f{C}{i}(0) 
+\f{B}{i}(0)\f{C}{i}(\z{i}{2}) 
-\f{B}{i}(0)\f{C}{i}(0) \\
& & 
+ \f{C}{i}(\z{i}{2})\z{i}{2}\fnl{B}{i}{2}{0}(\z{i}{2})
+\f{B}{i}(\z{i}{1})\z{i}{1}\fnl{C}{i}{1}{0}(\z{i}{1}). 
\end{eqnarray*}
This must be a constant by the predeformability 
(Lemma \ref{lem_conditions}, (\ref{predeformability})), 
so we have the following. 

\begin{lemma}\label{lem_predeformability_0}
The predeformability holds modulo $s$ if and only if 
\begin{eqnarray*}
\fnl{B}{i}{2}{0}(\z{i}{2}) & = & 
-\f{B}{i}(0)\frac{\f{C}{i}(\z{i}{2})-\f{C}{i}(0)}{\z{i}{2}\f{C}{i}(\z{i}{2})}, \\
\fnl{C}{i}{1}{0}(\z{i}{1}) & = & 
-\f{C}{i}(0)\frac{\f{B}{i}(\z{i}{1})-\f{B}{i}(0)}{\z{i}{1}\f{B}{i}(\z{i}{1})}. 
\end{eqnarray*}
In particular, $\fnl{B}{i}{2}{0}(0)=-\f{B}{i}(0)\f{C}{i}'(0)/\f{C}{i}(0)\not=0$. 
\end{lemma}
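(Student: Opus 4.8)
The plan is to finish off directly from the congruence for $\fn{B}{i}\fn{C}{i}$ modulo $s$ just established, using the requirement — from the predeformability condition, Lemma~\ref{lem_conditions}~(\ref{predeformability}) — that its right‑hand side be a constant, i.e.\ an element of $\CC$. The guiding observation is that on the central fibre $X$, which near $R_i$ is the union of the two branches $X_i$ and $X_0$, one has $\z{i}{1}\z{i}{2}=0$ (the parameter $\mu_i$ vanishes modulo $s$), that $\z{i}{1}$ is an \'etale coordinate on $X_i$ vanishing at the node $R_i=X_i\cap X_0$, and that $\z{i}{2}$ plays the same role on $X_0$. Hence a regular function on a neighbourhood of $R_i$ in $X$ is a constant if and only if its restrictions to $X_i$ and to $X_0$ are each constant (they then automatically agree, being equal at $R_i$). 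Restricting the displayed expression to $X_i$ (set $\z{i}{2}=0$) gives $\f{B}{i}(\z{i}{1})\bigl(\f{C}{i}(0)+\z{i}{1}\fnl{C}{i}{1}{0}(\z{i}{1})\bigr)$, and restricting it to $X_0$ (set $\z{i}{1}=0$) gives $\f{C}{i}(\z{i}{2})\bigl(\f{B}{i}(0)+\z{i}{2}\fnl{B}{i}{2}{0}(\z{i}{2})\bigr)$.

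For the ``only if'' direction, since $\f{B}{i}$ is nowhere zero (as noted above) and $\f{C}{i}(\z{i}{2})=\frac{1}{(1-\z{i}{2})^{d_1+d_2}}$ is nowhere zero on $(U_i\cap X_0)\setminus Q$, requiring the two products to be constant forces $\f{C}{i}(0)+\z{i}{1}\fnl{C}{i}{1}{0}(\z{i}{1})$ and $\f{B}{i}(0)+\z{i}{2}\fnl{B}{i}{2}{0}(\z{i}{2})$ to equal a constant times $\f{B}{i}(\z{i}{1})^{-1}$ and $\f{C}{i}(\z{i}{2})^{-1}$ respectively; evaluating at $R_i$ identifies both constants with $\f{B}{i}(0)\f{C}{i}(0)$, and solving for $\fnl{C}{i}{1}{0}$ and $\fnl{B}{i}{2}{0}$ — using that $\f{B}{i}(\z{i}{1})-\f{B}{i}(0)$ and $\f{C}{i}(\z{i}{2})-\f{C}{i}(0)$ are divisible by $\z{i}{1}$ and $\z{i}{2}$ at the smooth point $R_i$ — produces exactly the two claimed formulas. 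I would also note in passing that these right‑hand sides lie in the required rings $\cO_{X_i}(V\cap X_i)$ and $\cO_{X_0}((U_i\cap X_0)\setminus Q)$, which is clear once one sees where the denominators can vanish.

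For the converse, substituting the two formulas back into the displayed expression, the $X_i$‑part and the $X_0$‑part each collapse to $\f{B}{i}(0)\f{C}{i}(0)$, so $\fn{B}{i}\fn{C}{i}\equiv\f{B}{i}(0)\f{C}{i}(0)\pmod s$ and predeformability holds modulo $s$. For the last clause I would let $\z{i}{2}\to 0$ in the formula for $\fnl{B}{i}{2}{0}$, using $\bigl(\f{C}{i}(\z{i}{2})-\f{C}{i}(0)\bigr)/\z{i}{2}\to\f{C}{i}'(0)$, to obtain $\fnl{B}{i}{2}{0}(0)=-\f{B}{i}(0)\f{C}{i}'(0)/\f{C}{i}(0)$, which is nonzero because $\f{B}{i}(0)\neq 0$, $\f{C}{i}(0)\neq 0$ and $\f{C}{i}'(0)=d_1+d_2\neq 0$. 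There is no real obstacle here: the whole argument is a short computation once the displayed congruence is in hand, and the only point deserving a moment's care is the justification of the component‑wise ``constant'' criterion together with the vanishing of the cross term $\z{i}{1}\z{i}{2}(\cdots)$ on the central fibre, both of which are immediate from the explicit local model of $\cX$.
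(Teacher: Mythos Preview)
Your argument is correct and essentially identical to the paper's: both reduce to the observation that, modulo $s$, the function $\fn{B}{i}\fn{C}{i}$ lives on the nodal curve $\z{i}{1}\z{i}{2}=0$ and is constant exactly when its restrictions to the two branches are constant, and both then solve for $\fnl{B}{i}{2}{0}$ and $\fnl{C}{i}{1}{0}$ using the nonvanishing of $\f{B}{i}$ and $\f{C}{i}$. The only cosmetic difference is that the paper rewrites $\f{B}{i}(\z{i}{1})\f{C}{i}(\z{i}{2})=\f{B}{i}(\z{i}{1})\f{C}{i}(0)+\f{B}{i}(0)\f{C}{i}(\z{i}{2})-\f{B}{i}(0)\f{C}{i}(0)$ to separate variables, whereas you restrict to each branch directly; your treatment of the converse and the final evaluation $\f{C}{i}'(0)=d_1+d_2$ are fine additions.
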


Thus the $0$-th order terms 
$\fnl{A}{i}{j}{0}$, $\fnl{B}{i}{j}{0}$ and $\fnl{C}{i}{j}{0}$ 
are uniquely determined.

\subsection{Extending deformations}

In this and the following subsections, 
we assume that $\{\fnl{A}{i}{j}{k}, \fnl{B}{i}{j}{k}, \fnl{C}{i}{j}{k}\}_{k<n}$ 
satisfies the conditions modulo $s^n$. 
Denote by $\tilde{f}'$ the corresponding family of relative stable morphisms 
over $S_{n-1}$. 
Let us rewrite the conditions of Lemma \ref{lem_conditions} 
for $\fnl{A}{i}{j}{n}$, $\fnl{B}{i}{j}{n}$ and $\fnl{C}{i}{j}{n}$.

\begin{lemma}\label{lem_w0_on_V0}
For any $\m{1}{n}$ and $\m{2}{n}$, 
the condition (\ref{w0_on_V0}) of Lemma \ref{lem_conditions} is satisfiable 
if and only if $\fnl{A}{1}{0}{n}=\fnl{A}{2}{0}{n}$, 
and in that case 
there exist uniquely 
$\fnl{A}{1}{2}{n}(\z{1}{2})$ and $\fnl{A}{2}{2}{n}(\z{2}{2})$
such that the condition is satisfied 
for any choice of the remaining data. 
\end{lemma}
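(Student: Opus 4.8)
The plan is to analyze condition (\ref{w0_on_V0}) of Lemma \ref{lem_conditions} directly on the overlap $\cV_0\setminus Q$ by expanding everything in the coordinate $\z{1}{2}$ (equivalently $\z{2}{2}=1/\z{1}{2}$). Recall $\w{i}{0}=\fn{A}{i}$, and on $\cV_0$ we are looking at the pieces of $\fn{A}{i}$ supported on $X_0$, i.e. the terms $\fnl{A}{i}{0}{k}+\z{i}{2}\fnl{A}{i}{2}{k}(\z{i}{2})$. The $\z{i}{1}$-part $\z{i}{1}\fnl{A}{i}{1}{k}(\z{i}{1})$ contributes on $\cV_0$ only through $\z{i}{1}=\mu_i\z{j}{2}$ (with $\{i,j\}=\{1,2\}$), so it feeds into the computation but does not change the structure: it is a correction determined by the lower-order data $\{\fnl{A}{i}{1}{k}\}_{k<n}$ together with the $\m{i}{n}$'s, plus the order-$n$ unknown $\fnl{A}{i}{1}{n}(0)$ evaluated appropriately. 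First I would write out the order-$s^n$ part of the equation $\w{1}{0}=\w{2}{0}$ on $\cV_0\setminus Q$ as an identity of Laurent series in $\z{1}{2}$ over $\CC$, separating the part holomorphic at $\z{1}{2}=0$ (coming from $\cU_1$-side data) and the part holomorphic at $\z{1}{2}=\infty$ (coming from $\cU_2$-side data).

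The key observation is that on $\cV_0\setminus Q$ the ring is $\cO_{X_0}((U_1\cap X_0)\cap(U_2\cap X_0)\setminus Q)$, i.e. Laurent polynomials in $\z{1}{2}$ minus the point $Q$ where $\z{1}{2}=1$. So an element of $\cO_{\cX}(\cV_0\setminus Q)$ modulo the image of $\cO_{\cX}(\cV_1\cup\cV_0)$ (functions holomorphic along $X_1$) and $\cO_{\cX}(\cV_0\cup\cV_2)$ (functions holomorphic along $X_2$) is detected by: (i) the value at $\z{1}{2}=0$, which must match, and (ii) possible poles at $Q$. The next step is to show that the ``constant term'' matching at the node $R_1=\{\z{1}{2}=0\}$ (and symmetrically at $R_2$) forces exactly the single scalar equation $\fnl{A}{1}{0}{n}=\fnl{A}{2}{0}{n}$: each side, expanded near its respective node, has a well-defined value at the node, and predeformability/the structure of $\cX$ identifies those two values with $\fnl{A}{1}{0}{n}$ and $\fnl{A}{2}{0}{n}$ respectively (up to contributions from lower-order terms and the $\m{i}{n}$, which by the inductive hypothesis are already equal because the central fiber satisfies $\f{A}{1}(0)=\f{A}{2}(0)$ and the lower-order conditions hold). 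Once that scalar equation holds, the difference $\w{1}{0}-\w{2}{0}$ on the overlap, at order $s^n$, is a Laurent polynomial in $\z{1}{2}$ vanishing at $\z{1}{2}=0$, hence lies in $\z{1}{2}\CC[\z{1}{2},\z{1}{2}^{-1}]$, and one splits it by partial fractions into a part in $\z{1}{2}\CC[\z{1}{2}]$ (absorbed by choosing $\fnl{A}{1}{2}{n}(\z{1}{2})$) and a part in $\z{2}{2}\CC[\z{2}{2}]$ (absorbed by $\fnl{A}{2}{2}{n}(\z{2}{2})$), with no overlap between these pieces except the constant, which is already accounted for. This gives existence; uniqueness follows because the decomposition of a Laurent polynomial into its nonnegative-in-$\z{1}{2}$ part and its negative part is unique.

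Finally I would check the ``extends to $Q$'' clause: $Q$ corresponds to $\z{1}{2}=\z{2}{2}=1$, and since $w_0$ is a regular function on $W\supset f(Q)$ and $X_0$ maps to the fiber of $\Delta_1$ over $P$ with $w_0\equiv 0$ there in the central fiber, the function $\w{i}{0}$ is automatically regular at $Q$ once it is regular on $\cV_i$; the defining data $\fnl{A}{i}{2}{k}(\z{i}{2})\in\cO_{X_0}((U_i\cap X_0)\setminus Q)$ could a priori have a pole at $Q$, but the matching condition forces the principal parts at $Q$ from the two sides to agree, and then regularity on either side (coming from condition (\ref{w0w1_on_X1X2}), already imposed) kills them; so I would arrange the argument so that ``extends to $Q$'' is a consequence of the splitting rather than an extra constraint, or alternatively note that we are free to choose $\fnl{A}{i}{2}{n}$ with no pole at $Q$ and that this is forced.

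The main obstacle I expect is bookkeeping the contribution of the $\z{i}{1}$-parts of $\fn{A}{i}$ to the overlap $\cV_0$ via the gluing $\z{1}{1}=\mu_1\z{2}{2}$, $\z{2}{1}=\mu_2\z{1}{2}$: because $\mu_i$ itself has an order-$s^n$ unknown $\m{i}{n}$, one must confirm that these unknowns enter only through the already-fixed lower-order $\fnl{A}{i}{1}{k}(0)$ (so that the phrase ``for any $\m{1}{n}$ and $\m{2}{n}$'' is legitimate) and do not disturb the scalar equation $\fnl{A}{1}{0}{n}=\fnl{A}{2}{0}{n}$; the point is that $\z{i}{1}\fnl{A}{i}{1}{k}(\z{i}{1})$ vanishes at the node, so its specialization to the overlap, while $\mu$-dependent, still vanishes at $\z{1}{2}=0$ and hence only affects the $\z{1}{2}$-divisible part that we are free to absorb. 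Making this precise — isolating which quantities are ``already determined'' versus ``still free'' at stage $n$ — is the delicate part; the rest is the standard Laurent-polynomial Mittag–Leffler splitting on $\PP^1\setminus\{0,\infty,Q\}$.
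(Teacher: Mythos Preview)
Your approach---substitute $\z{i}{1}=\mu_i/\z{i}{2}$ on $\cV_0$ and match the order-$s^n$ coefficients as Laurent polynomials in $\z{1}{2}$, with the constant term yielding $\fnl{A}{1}{0}{n}=\fnl{A}{2}{0}{n}$ and the strictly positive and strictly negative parts uniquely fixing $\fnl{A}{1}{2}{n}$ and $\fnl{A}{2}{2}{n}$---is exactly what the paper does. Two small corrections to your bookkeeping: the ``extends to $Q$'' clause of condition~(\ref{w0_on_V0}) is used directly as the constraint forcing $\fnl{A}{i}{2}{n}\in\CC[\z{i}{2}]$ (no pole at $\z{i}{2}=1$), not deduced from condition~(\ref{w0w1_on_X1X2}); and for $i=1$ the term $\z{1}{1}\fnl{A}{1}{1}{k}(\z{1}{1})$ specializes to $(\mu_1/\z{1}{2})\fnl{A}{1}{1}{k}(\mu_1/\z{1}{2})\in\tfrac{1}{\z{1}{2}}\CC[1/\z{1}{2}]$, so it has a \emph{pole} at $\z{1}{2}=0$ rather than vanishing there---what survives is only that it contributes no constant term, which is all you need.
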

\begin{proof}
On $V_0$, we have $\z{i}{1}=\sum_{k=1}^n \m{i}{k}s^k/\z{i}{2}$, 
and one has 
\begin{eqnarray*}
\fn{A}{i}|_{\cV_0\setminus Q}  & = & 
s
\left\{
\frac{1}{\z{i}{2}}R^{(1)}_{A, i}(\z{i}{2}) 
+ \fnl{A}{i}{0}{1} +\z{i}{2}\fnl{A}{i}{2}{1}(\z{i}{2})
\right\} \\
& & + \dots \\
& & + s^n\left\{
\frac{1}{\z{i}{2}}R^{(n)}_{A, i}(\z{i}{2}) 
+ \fnl{A}{i}{0}{n} + \z{i}{2}\fnl{A}{i}{2}{n}(\z{i}{2})
\right\}, 
\end{eqnarray*}
where 
\[
\sum_{k=1}^n s^k R^{(k)}_{A, i}(\z{i}{2}) = 
\sum_{k=1}^n \m{i}{k}s^k
\sum_{k=0}^{n-1} 
s^k \fnl{A}{i}{1}{k}\left(\frac{\sum_{k'=1}^n \m{i}{k'}s^{k'}}{\z{i}{2}}\right). 
\]
From $s^{n+1}=0$, 
it follows that $R^{(n)}_{A, i}(\z{i}{2})$ belongs to $\CC[1/\z{i}{2}]$ 
and is determined by 
$\{\m{i}{k}\}_{k\leq n}$ and $\{\fnl{A}{i}{1}{k}\}_{k<n}$. 
By the requirement that $\w{i}{0}|_{\cV_0\setminus Q}$ extends to $\z{i}{2}=1$, 
we have $\fnl{A}{i}{2}{n}(\z{i}{2})\in\CC[\z{i}{2}]$. 
The assertion follows 
from the requirement 
$\w{1}{0}|_{\cV_0\setminus Q}=\w{2}{0}|_{\cV_0\setminus Q}$. 
\end{proof}

\begin{lemma}\label{lem_w2_on_V0}
For any $\m{1}{n}$ and $\m{2}{n}$, 
there exist 
$\fnl{C}{1}{0}{n}$, $\fnl{C}{2}{0}{n}$, 
$\fnl{C}{1}{2}{n}(\z{1}{2})$ and $\fnl{C}{2}{2}{n}(\z{2}{2})$
(with a freedom of dimension $1$) 
such that the condition (\ref{w2_on_V0}) of Lemma \ref{lem_conditions} is satisfied 
for any choice of the remaining data. 
\end{lemma}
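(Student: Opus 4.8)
The plan is to follow the proof of Lemma \ref{lem_w0_on_V0}: reduce the condition to a statement about the $s^n$-coefficient of $\w{i}{2}$ on the middle chart $\cV_0$, and then identify the admissible choices with a torsor over the linear system of a degree-$0$ divisor on $X_0\cong\PP^1$. So, first I would restrict to $\cV_0$, where $\z{i}{1}=\sum_{k\geq 1}\m{i}{k}s^k/\z{i}{2}$, and expand $\phi:=\w{1}{2}|_{\cV_0}=\sum_{k=0}^n s^k\phi_{(k)}$ with the $\phi_{(k)}$ rational functions on $X_0$; in particular $\phi_0=\sigma_1\z{1}{2}^{d_1}\f{C}{1}(\z{1}{2})=\z{1}{2}^{d_1}/(1-\z{1}{2})^{d_1+d_2}$. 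Substituting the solved form of $\z{i}{1}$ into $\fn{C}{i}$ and collecting the $s^n$-term on the side $i$ produces, just as in Lemma \ref{lem_w0_on_V0}, an expression
\[
\sigma_i\z{i}{2}^{d_i}\left(R^{(n)}_{C,i}(\z{i}{2})+\fnl{C}{i}{0}{n}+\z{i}{2}\fnl{C}{i}{2}{n}(\z{i}{2})\right),
\]
where $R^{(n)}_{C,i}(\z{i}{2})\in\z{i}{2}^{-1}\CC[\z{i}{2}^{-1}]$ has no constant term and is completely determined by $\{\m{i}{k}\}_{k\leq n}$ and $\{\fnl{C}{i}{1}{k}\}_{k<n}$, all already fixed; thus the new unknowns $\fnl{C}{i}{0}{n}\in\CC$ and $\fnl{C}{i}{2}{n}\in\cO_{X_0}((U_i\cap X_0)\setminus Q)$ enter this term linearly.

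Next I would transcribe condition (\ref{w2_on_V0}) at order $s^n$. The matching $\w{1}{2}|_{\cV_0\setminus Q}=\w{2}{2}|_{\cV_0\setminus Q}$ forces the two expressions above ($i=1,2$) to coincide; their common value $\phi_{(n)}$ is then a rational function on $X_0\cong\PP^1$ whose poles lie only in $\{R_1,R_2,Q\}$. Conversely, the $i=1$ formula can be solved for the unknowns exactly when $\mathrm{ord}_{R_1}(\phi_{(n)}-\sigma_1\z{1}{2}^{d_1}R^{(n)}_{C,1})\geq d_1$, in which case $\fnl{C}{1}{0}{n}$ is forced to be the value at $R_1$ of $\sigma_1^{-1}\z{1}{2}^{-d_1}\phi_{(n)}-R^{(n)}_{C,1}$ and $\fnl{C}{1}{2}{n}$ to be the quotient of the remainder by $\z{1}{2}$, which then automatically lies in $\cO_{X_0}(X_0\setminus\{R_2,Q\})=\cO_{X_0}((U_1\cap X_0)\setminus Q)$; the symmetric solving at $R_2$ recovers $\fnl{C}{2}{0}{n}$ and $\fnl{C}{2}{2}{n}$. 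Finally, since $\phi_0$ is a unit on $\cV_0\setminus Q$ with a pole of order exactly $d_1+d_2$ at $Q$, writing $1/\phi=\phi_0^{-1}(1+\sum_{k\geq 1}s^k\phi_{(k)}/\phi_0)^{-1}$ shows that the inverse extends over $Q$ with vanishing order $d_1+d_2$ if and only if every $\phi_{(k)}/\phi_0$ is regular at $Q$; by the induction hypothesis this holds for $k<n$, so the only new requirement is $\mathrm{ord}_Q\phi_{(n)}\geq-(d_1+d_2)$.

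Putting these together, the admissible quadruples $(\fnl{C}{1}{0}{n},\fnl{C}{2}{0}{n},\fnl{C}{1}{2}{n},\fnl{C}{2}{2}{n})$ correspond bijectively to the rational functions $\phi_{(n)}$ on $X_0\cong\PP^1$ whose expansion at $R_1$ (resp. $R_2$) agrees, up to order $d_1$ (resp. $d_2$), with the prescribed $\sigma_1\z{1}{2}^{d_1}R^{(n)}_{C,1}$ (resp. $\sigma_2\z{2}{2}^{d_2}R^{(n)}_{C,2}$), whose pole at $Q$ has order $\leq d_1+d_2$, and which are regular elsewhere. The obstruction to prescribing these two germs and the ambiguity of $\phi_{(n)}$ are respectively $H^1$ and $H^0$ of $\cO_{\PP^1}((d_1+d_2)Q-d_1R_1-d_2R_2)$, which has degree $0$; hence $H^1=0$ (so solutions exist for every $\m{1}{n},\m{2}{n}$) and $H^0\cong\CC$ is one-dimensional, generated by $\phi_0$ --- this is exactly the asserted freedom of dimension $1$. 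The part I expect to demand the most care is the first reduction --- pinning down which lower-order coefficients feed $R^{(n)}_{C,i}$, and checking that the two ``solve at $R_i$'' operations land in the prescribed function spaces --- together with the equivalence between ``$1/\phi$ extends on $\cV_0$ with the prescribed vanishing order at $Q$'' and the single inequality $\mathrm{ord}_Q\phi_{(n)}\geq-(d_1+d_2)$; once these are in place, existence and the dimension count follow from Riemann--Roch on $\PP^1$.
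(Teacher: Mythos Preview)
Your argument is correct and reaches the same conclusion, but by a genuinely different route from the paper. The paper introduces auxiliary coefficients $\fnl{D}{i}{j}{k}$ via $(1-\z{i}{2})^{d_1+d_2}\fn{C}{i}=\sum_k s^k\{\fnl{D}{i}{0}{k}+\z{i}{1}\fnl{D}{i}{1}{k}+\z{i}{2}\fnl{D}{i}{2}{k}\}$; this clears the pole at $Q$, so the vanishing-order condition there becomes simply $\fnl{D}{i}{2}{n}\in\CC[\z{i}{2}]$. After expressing both $\w{i}{2}|_{\cV_0\setminus Q}$ in the common variable $\z{1}{2}$, the paper matches powers of $\z{1}{2}$ directly: the matching forces $\fnl{D}{1}{0}{n}=\fnl{D}{2}{0}{n}$ (this free constant is the one-dimensional freedom) and determines $\fnl{D}{1}{2}{n}$, $\fnl{D}{2}{2}{n}$ uniquely; one then translates back to the $C$-coefficients. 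You instead keep the $C$-coefficients and phrase the problem as a Mittag--Leffler question on $X_0\cong\PP^1$: find a rational function $\phi_{(n)}$ with prescribed principal parts (through order $d_i-1$) at $R_i$ and pole order $\le d_1+d_2$ at $Q$, then read off existence from $H^1(\cO_{\PP^1}((d_1+d_2)Q-d_1R_1-d_2R_2))=H^1(\cO_{\PP^1})=0$ and the freedom from $H^0=\CC$. The paper's substitution is in effect the same trivialization---multiplying by $(1-\z{i}{2})^{d_1+d_2}$ is tensoring with $\cO(-(d_1+d_2)Q)$---carried out by hand. Your cohomological packaging makes the source of the one-parameter ambiguity transparent and would adapt to more general configurations; the paper's explicit coefficient-matching yields concrete formulas and sidesteps the bookkeeping you flag (checking that $R^{(n)}_{C,i}\in\z{i}{2}^{-1}\CC[\z{i}{2}^{-1}]$ and that the solved $\fnl{C}{i}{2}{n}$ lands in the right function space), which in the paper's coordinates is immediate.
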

\begin{proof}
Let us introduce $\fnl{D}{i}{j}{k}$ by 
\[
(1-\z{i}{2})^{d_1+d_2}\fn{C}{i}
= 
\sum_{k=0}^n s^k\left\{
\fnl{D}{i}{0}{k}+\z{i}{1}\fnl{D}{i}{1}{k}(\z{i}{1})+\z{i}{2}\fnl{D}{i}{2}{k}(\z{i}{2})
\right\}. 
\]
Note that $\{\fnl{C}{i}{j}{k}\}_{j=0, 1, 2, k\leq m}$ and 
$\{\fnl{D}{i}{j}{k}\}_{j=0, 1, 2, k\leq m}$ determine each other. 

We have 
\begin{eqnarray*}
& & \w{1}{2}|_{\cV_0\setminus Q} \\
& = & 
\frac{(\z{1}{2})^{d_1}}{(1-\z{1}{2})^{d_1+d_2}}
\sum_{k=0}^n s^k\left\{
\fnl{D}{1}{0}{k}+\z{1}{1}\fnl{D}{1}{1}{k}(\z{1}{1})+\z{1}{2}\fnl{D}{1}{2}{k}(\z{1}{2})
\right\} \\
& = & 
\frac{1}{(1-\z{1}{2})^{d_1+d_2}}
\sum_{k=0}^n s^k\left\{
\left(\sum_{k'=1}^n \m{1}{k'}s^{k'}\right){\z{1}{2}}^{d_1-1}
\fnl{D}{1}{1}{k}\left(\left(\sum_{k'=1}^n \m{1}{k'}s^{k'}\right)\frac{1}{\z{1}{2}}\right) 
\right. \\
& & 
\left. 
\qquad + {\z{1}{2}}^{d_1}\fnl{D}{1}{0}{k}
+{\z{1}{2}}^{d_1+1}
\fnl{D}{1}{2}{k}(\z{1}{2})
\right\} \\
& = & 
\frac{1}{(1-\z{1}{2})^{d_1+d_2}}
\sum_{k=0}^n s^k\left\{
{\z{1}{2}}^{d_1-1}R^{(k)}_{D, 1}(\z{1}{2}) 
+ {\z{1}{2}}^{d_1}\fnl{D}{1}{0}{k} 
+ {\z{1}{2}}^{d_1+1}\fnl{D}{1}{2}{k}(\z{1}{2}) 
\right\}  
\end{eqnarray*}
and
\begin{eqnarray*}
& & \w{2}{2}|_{\cV_0\setminus Q} \\
& = & 
\sigma_2\frac{(\z{2}{2})^{d_2}}{(1-\z{2}{2})^{d_1+d_2}}
\sum_{k=0}^n s^k\left\{
\fnl{D}{2}{0}{k}+\z{2}{1}\fnl{D}{2}{1}{k}(\z{2}{1})+\z{2}{2}\fnl{D}{2}{2}{k}(\z{2}{2})
\right\} \\
& = & 
\frac{(\z{1}{2})^{d_1}}{(1-\z{1}{2})^{d_1+d_2}}
\sum_{k=0}^n s^k\left\{
\fnl{D}{2}{0}{k}+\z{2}{1}\fnl{D}{2}{1}{k}(\z{2}{1})+\z{2}{2}\fnl{D}{2}{2}{k}(\z{2}{2})
\right\} \\
& = & 
\frac{1}{(1-\z{1}{2})^{d_1+d_2}}
\sum_{k=0}^n s^k\left\{
{\z{1}{2}}^{d_1-1}\fnl{D}{2}{2}{k}\left(\frac{1}{\z{1}{2}}\right) + {\z{1}{2}}^{d_1}\fnl{D}{2}{0}{k}\right. \\
& & 
\left. 
\qquad +\left(\sum_{k'=1}^n \m{2}{k'}s^{k'}\right){\z{1}{2}}^{d_1+1}
\fnl{D}{2}{1}{k}\left(\left(\sum_{k'=1}^n \m{2}{k'}s^{k'}\right)\z{1}{2}\right)
\right\} \\
& = & 
\frac{1}{(1-\z{1}{2})^{d_1+d_2}}
\sum_{k=0}^n s^k\left\{
{\z{1}{2}}^{d_1-1}\fnl{D}{2}{2}{k}\left(\frac{1}{\z{1}{2}}\right)
+ {\z{1}{2}}^{d_1}\fnl{D}{2}{0}{k}
+ {\z{1}{2}}^{d_1+1}R^{(k)}_{D, 2}(\z{1}{2}) 
\right\}, 
\end{eqnarray*}
where 
\[
\sum_{k=0}^n s^k R^{(k)}_{D, 1}(\z{1}{2}) = 
\sum_{k=1}^n \m{1}{k}s^k
\sum_{k=0}^{n-1} 
s^k \fnl{D}{1}{1}{k}\left(\frac{\sum_{k'=1}^n \m{1}{k'}s^{k'}}{\z{1}{2}}\right)
\]
and 
\[
\sum_{k=0}^n s^k R^{(k)}_{D, 2}(\z{1}{2}) = 
\sum_{k=1}^n \m{2}{k}s^k
\sum_{k=0}^{n-1} 
s^k \fnl{D}{2}{1}{k}\left(\z{1}{2}\sum_{k'=1}^n \m{2}{k'}s^{k'}\right). 
\]
We see that $R^{(n)}_{D, i}(\z{1}{2})$ is determined by 
$\{\m{i}{k}\}_{k\leq n}$ and $\{\fnl{D}{i}{1}{k}\}_{k<n}$, 
and that $R^{(n)}_{D, 1}(\z{1}{2})\in \CC[1/\z{1}{2}]$ 
and $R^{(n)}_{D, 2}(\z{1}{2})\in \CC[\z{1}{2}]$. 
The condition that $(\w{i}{2}|_{\cV_0\setminus Q})^{-1}$ 
has vanishing order $d_1+d_2$ at $Q$ 
is equivalent to $\fnl{D}{i}{2}{n}(\z{i}{2})\in\CC[\z{i}{2}]$. 
From $\w{1}{2}|_{\cV_0\setminus Q}=\w{2}{2}|_{\cV_0\setminus Q}$, 
it follows that 
$\fnl{D}{1}{0}{n}=\fnl{D}{2}{0}{n}$ 
and that 
$\fnl{D}{1}{2}{n}(\z{1}{2})$ and $\fnl{D}{2}{2}{n}(\z{2}{2})$
are uniquely determined. 
To go back to $\fnl{C}{i}{j}{n}$, 
note that 
\[
\fnl{C}{i}{0}{n}+\z{i}{2}\fnl{C}{i}{2}{n}(\z{i}{2})-
\frac{1}{(1-\z{i}{2})^{d_1+d_2}}
\left\{\fnl{D}{i}{0}{n}+\z{i}{2}\fnl{D}{i}{2}{n}(\z{i}{2})
\right\}
\]
and 
\[
\fnl{C}{i}{1}{n}(\z{i}{1})- 
\fnl{D}{i}{1}{n}(\z{i}{1}) 
\]
are functions of 
$\{\m{i}{k}\}_{k\leq n}$ and $\{\fnl{D}{i}{1}{k}\}_{k<n}$. 
Thus we obtain the result. 
\end{proof}

\begin{lemma}\label{lem_w0w1_on_X1X2}
(1)
For $n=1$, 
the condition (\ref{w0w1_on_X1X2}) of Lemma \ref{lem_conditions} 
is satisfied if and only if 
there exists a polynomial $c_i(\z{i}{1})$ of degree at most $2$ 
such that 
\begin{eqnarray*}
\fnl{A}{i}{0}{1}+\z{i}{1}\fnl{A}{i}{1}{1}(\z{i}{1})
& = & 
c_i(\z{i}{1})(\f{A}{i}(\z{i}{1}))', 
\\
\z{i}{1}^{d_i-1}\left\{
\m{i}{1}\fnl{B}{i}{2}{0}(0)
+\z{i}{1}\fnl{B}{i}{0}{1}+\z{i}{1}^2\fnl{B}{i}{1}{1}(\z{i}{1})
\right\}
& = & 
c_i(\z{i}{1})(\z{i}{1}^{d_i}\f{B}{i}(\z{i}{1}))'. 
\end{eqnarray*}

(2)
For $n>1$, let $\fnl{A}{i}{0}{n}$ be arbitrarily fixed. 
Then one can find $\m{i}{n}$, $\fnl{A}{i}{1}{n}(\z{i}{1})$, $\fnl{B}{i}{0}{n}$ 
and $\fnl{B}{i}{1}{n}(\z{i}{1})$ 
for which the condition (\ref{w0w1_on_X1X2}) of Lemma \ref{lem_conditions} 
is satisfied. 
\end{lemma}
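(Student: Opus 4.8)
The plan is to exploit that the condition~(\ref{w0w1_on_X1X2}) of Lemma~\ref{lem_conditions} involves only the component $X_i$ (the gluing to $X_0$ entering through the smoothing parameter $\mu_i$), so I would treat $i=1,2$ uniformly and read the condition as a statement about deformations of $g_i:=\varphi\circ f|_{X_i}\colon X_i\cong\PP^1\to Z$, a normalization of $C_i$ with $g_i^*D=d_iR_i$. First I would restrict $\w{i}{0}=\fn{A}{i}$ and $\w{i}{1}=\z{i}{1}^{d_i}\fn{B}{i}$ to $\cX_i^\circ$: on the $X_i$-branch one substitutes $\z{i}{2}=\mu_i/\z{i}{1}$ into the expansions of $\fn{A}{i}$ and $\fn{B}{i}$ and collects powers of $s$. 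The terms $\z{i}{2}\fnl{B}{i}{2}{k}(\z{i}{2})$ then contribute $\tfrac{\mu_i}{\z{i}{1}}\fnl{B}{i}{2}{k}(\tfrac{\mu_i}{\z{i}{1}})$, of which, modulo $s^2$, only $\tfrac{\m{i}{1}s}{\z{i}{1}}\fnl{B}{i}{2}{0}(0)$ survives; multiplied by $\z{i}{1}^{d_i}$ this is exactly the term $\m{i}{1}\fnl{B}{i}{2}{0}(0)$ in the second displayed identity. After this bookkeeping, condition~(\ref{w0w1_on_X1X2}) at order $1$ (resp.\ order $n$) says precisely that the germ at $R_i$ recorded by $\fnl{A}{i}{j}{\bullet},\fnl{B}{i}{j}{\bullet},\m{i}{\bullet}$ extends to a first-order (resp.\ $n$-th order) deformation over $S_n$ of $g_i\colon\PP^1\to Z$; moreover, as $\w{i}{1}=\z{i}{1}^{d_i}\fn{B}{i}$ vanishes to order exactly $d_i$ along $\z{i}{1}=0$ (because $\f{B}{i}(0)\neq0$), such a deformation keeps $g_i^{-1}(D)$ equal to $d_i$ times a point, i.e.\ it is a deformation of $g_i$ as an object of $\fM_{\beta_i}^\circ$.

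For part~(1), the ``if'' direction is a direct computation: for a polynomial $c_i(\z{i}{1})$ with $\deg c_i\le2$, i.e.\ a global vector field $c_i\partial_{\z{i}{1}}$ on $X_i\cong\PP^1$, the reparametrization $\z{i}{1}\mapsto\z{i}{1}+sc_i(\z{i}{1})$ turns the parametrization $\gamma_i=(\f{A}{i},\z{i}{1}^{d_i}\f{B}{i})$ into $\gamma_i+s\,c_i\gamma_i'$, which extends to all of $\PP^1$ and hence satisfies~(\ref{w0w1_on_X1X2}); reading off the $s^1$-coefficients and using $(\z{i}{1}^{d_i}\f{B}{i})'=\z{i}{1}^{d_i-1}(d_i\f{B}{i}+\z{i}{1}\f{B}{i}')$ produces the two displayed identities, with $\m{i}{1}$ pinned down by the constant term $\m{i}{1}\fnl{B}{i}{2}{0}(0)=d_i\,c_i(0)\f{B}{i}(0)$, which is solvable since $\fnl{B}{i}{2}{0}(0)\neq0$ by Lemma~\ref{lem_predeformability_0} and $\f{B}{i}(0)\neq0$. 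For the ``only if'' direction, a first-order solution of~(\ref{w0w1_on_X1X2}) is, by the first paragraph, a first-order deformation of $g_i$ in $\fM_{\beta_i}^\circ$; since $(K_Z+D)\cdot\beta_i=0$ and $C_i$ is immersed, one has $g_i^*T_Z\cong\cO(2)\oplus\cO(d_i-2)$, so $h^0(g_i^*T_Z)=d_i+2$ and a standard count (or Corollary~\ref{cor_configuration}(2) and (5), together with the absence of automorphisms of $g_i$) shows that the tangency-preserving first-order deformations of $g_i$ at $R_i$ form a $3$-dimensional space; as this space contains the (also $3$-dimensional) space of domain reparametrizations of $\PP^1$, the two coincide, so the given deformation comes from a $c_i\partial_{\z{i}{1}}$ with $\deg c_i\le2$, and one recovers the displayed identities as before. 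Combining the two directions gives the equivalence in~(1).

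For part~(2) with $n>1$, the point is unobstructedness: a deformation of $g_i\colon\PP^1\to Z$ over $S_{n-1}$ extends over $S_n$ because the obstruction lies in $H^1(\PP^1,g_i^*T_Z)$, which vanishes since $g_i^*T_Z\cong\cO(2)\oplus\cO(d_i-2)$ with $d_i\ge1$; the extension is unique up to $H^0(\PP^1,g_i^*T_Z)$, and within this freedom one can prescribe the value $\fnl{A}{i}{0}{n}$ of the $s^n$-term of $\w{i}{0}$ at $R_i$ arbitrarily and then solve for the remaining $X_i$-side coefficients $\m{i}{n},\fnl{A}{i}{1}{n},\fnl{B}{i}{0}{n},\fnl{B}{i}{1}{n}$ exactly as at order $1$ but now with inhomogeneous terms coming from the fixed order-$(n-1)$ data, the $\mu_i/\z{i}{1}$ contributions again being absorbed into the $\m{i}{n}$-term. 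I expect the main obstacle to be the ``only if'' direction of~(1): one has to genuinely rule out every deformation of $C_i$ other than a reparametrization, and this is precisely where the hypotheses $(K_Z+D)\cdot\beta_i=0$ and ``$C_i$ immersed'' enter, whether via the rigidity of Corollary~\ref{cor_configuration} or via the explicit $\PP^1$-bundle / normal-bundle estimate; everything else is careful but routine power-series bookkeeping.
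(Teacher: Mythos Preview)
Your approach is the paper's: both read condition~(\ref{w0w1_on_X1X2}) as a deformation problem for $g_i=\varphi\circ f|_{X_i}\colon X_i\to Z$ with the tangency $g_i^*D=d_iR_i$ preserved up to moving the point, and both rest on the splitting $g_i^*T_Z\cong\cO(2)\oplus\cO(d_i-2)$. The paper packages this by introducing the subsheaf $\cE_i\subset g_i^*T_Z$ generated by $g_i^*T_Z(-\log D)$ and $T_{X_i}$, and proves via a short diagram that $\Gamma(T_{X_i})\xrightarrow{\sim}\Gamma(\cE_i)$ and $H^1(\cE_i)=0$; for $n=1$ the first-order data is then literally a global section of $\cE_i$, hence comes from some $c_i(\z{i}{1})\partial_{\z{i}{1}}$, and for $n>1$ the extensions form an $\cE_i$-torsor $\cF_i$, trivial since $H^1(\cE_i)=0$.

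There is one genuine gap in your ``only if'' for~(1). Corollary~\ref{cor_configuration}(2),(5) only tells you that $[g_i]$ is \emph{isolated} in $\fM_{\beta_i}$, not that its Zariski tangent space vanishes; so you cannot cite it to conclude that every tangency-preserving first-order deformation is a reparametrization. Your ``standard count'' is the right route, but you must actually carry it out: the tangency constraint says the image of the deformation in the normal bundle $N_{g_i}\cong\cO(d_i-2)$ vanishes to order $d_i-1$ at $R_i$, i.e.\ lies in $H^0(\cO(d_i-2)(-(d_i-1)R_i))=H^0(\cO(-1))=0$, which forces the deformation into $H^0(T_{X_i})$. This is exactly the paper's $\Gamma(T_{X_i})\cong\Gamma(\cE_i)$. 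A parallel imprecision affects~(2): for $n>1$ the lower-order data create the pole terms $Q^{(n)}_{A,i},Q^{(n)}_{B,i}\in\CC[1/\z{i}{1}]$, so what you are extending is not a map $X_i\to Z$ over $S_{n-1}$ and the relevant obstruction space is $H^1(\cE_i)$, not $H^1(g_i^*T_Z)$ (the extensions with the prescribed singular part and the $\z{i}{1}^{d_i-1}$-divisibility form an $\cE_i$-torsor, not a $g_i^*T_Z$-torsor). Since $H^1(\cE_i)=0$ follows immediately from your own splitting via $0\to T_{X_i}\to\cE_i\to\cO(-1)\to 0$, both gaps are easily closed once you work with $\cE_i$ rather than $g_i^*T_Z$.
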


\begin{proof}
On $\cX_i^\circ$, 
we have $\z{i}{2}=\sum_{k=1}^n \m{i}{k}s^k/\z{i}{1}$, 
\begin{eqnarray*}
\w{i}{0}|_{\cV\cap \cX_i^\circ} 
& = & 
\f{A}{i}(\z{i}{1})
+ \sum_{k=1}^n s^k\left\{
\frac{1}{\z{i}{1}}Q^{(k)}_{A, i}(\z{i}{1})+\fnl{A}{i}{0}{k}+\z{i}{1}\fnl{A}{i}{1}{k}(\z{i}{1})
\right\} 
\end{eqnarray*}
and 
\begin{eqnarray*}
& & \w{i}{1}|_{\cV\cap \cX_i^\circ}  \\
& = & \z{i}{1}^{d_i}\fn{B}{i} \\
& = & 
\z{i}{1}^{d_i}\f{B}{i}(\z{i}{1}) \\
& & 
+ \sum_{k=1}^n s^k\left[
\z{i}{1}^{d_i-1}\left\{Q^{(k)}_{B, i}(\z{i}{1})
+ \m{i}{k}\fnl{B}{i}{2}{0}(0)\right\}
+ \z{i}{1}^{d_i}\fnl{B}{i}{0}{k}+\z{i}{1}^{d_i+1}\fnl{B}{i}{1}{k}(\z{i}{1})
\right], 
\end{eqnarray*}
where $Q^{(m)}_{A, i}(\z{i}{1})$ and $Q^{(m)}_{B, i}(\z{i}{1})$ 
are elements of $\CC[1/{\z{i}{1}}]$ determined by 
$\{\m{i}{k}, \fnl{A}{i}{j}{k}\}_{k<m}$ and 
$\{\m{i}{k}, \fnl{B}{i}{j}{k}\}_{k<m}$, respectively. 
For $m=1$, we have $Q^{(1)}_{A, i}(\z{i}{1})=Q^{(1)}_{B, i}(\z{i}{1})=0$. 

Let $f_i$ be the restriction of $\varphi\circ f$ to $X_i$. 
The extensions of $\varphi\circ\tilde{f}'|_{\cX_i^\circ}$ 
to the $n$-th order correspond 
to sections of a torsor over $(f_i^*T_Z)|_{X_i^\circ}$. 
To study the extensions near $R_i$, 
we look at the following sheaf. 

\begin{lemma}
Let $\cE_i$ be the subsheaf of $f_i^*T_Z$ 
spanned by $f_i^*T_Z(-\log D)$ and $T_{X_i}$. 

(1)
The sheaf $\cE_i$ can also be described as the sheaf 
spanned by $f_i^*T_Z(-\log D)$ and $(f_i^*w_1/\z{i}{1})\partial_{w_1}$. 

(2)
There is a commutative diagram with exact rows and columns as follows. 
\[
\xymatrix{
 & 0 \ar[d] & 0 \ar[d] & & \\
0 \ar[r] & T_{X_i}(-\log R_i) \ar[r]\ar[d] & 
f_i^*T_Z(-\log D)\ar[r]\ar[d] & \cO(-1) \ar[r]\ar@{=}[d] & 0 \\
0 \ar[r] & T_{X_i} \ar[r]\ar[d] & \cE_i \ar[r]\ar[d] & \cO(-1) \ar[r] & 0 \\
 & \CC \ar@{=}[r]\ar[d] & \CC\ar[d] & & \\
 & 0 & 0 & & \\
}
\]

(3)
We have $H^1(\cE_i)=0$, 
and the natural map 
$\Gamma(T_{X_i})\to\Gamma(\cE_i)$ is an isomorphism. 
\end{lemma}
\begin{proof}
(1) 
Direct calculations. 

(2)
We have the exact sequence in the first row 
since $f_i$ is immersive, 
and the assertion is easy to prove. 

(3) follows from (2). 
\end{proof}

\begin{lemma}
Let $\cF_i$ be the sheaf on $X_i$ defined as follows: 
$\cF_i(U)$ is the set of extensions of $\tilde{f}'|_{U\cap \cX_i^\circ}$ 
to the $n$-th order 
whose expansion near $R_i$ is given by 
\begin{eqnarray*}
w_0
& = & 
\f{A}{i}(\z{i}{1})
+ \sum_{k=1}^{n-1} s^k\left\{
\frac{1}{\z{i}{1}}Q^{(k)}_{A, i}(\z{i}{1})+\fnl{A}{i}{0}{k}+\z{i}{1}\fnl{A}{i}{1}{k}(\z{i}{1})
\right\} \\
& & 
+ s^n\left\{
\frac{1}{\z{i}{1}}Q^{(n)}_{A, i}(\z{i}{1})+W_0(\z{i}{1})
\right\}
\end{eqnarray*}
and 
\begin{eqnarray*}
w_1 & = & 
\z{i}{1}^{d_i}\f{B}{i}(\z{i}{1}) \\
& & 
+ \sum_{k=1}^{n-1} s^k\left[
\z{i}{1}^{d_i-1}\left\{Q^{(k)}_{B, i}(\z{i}{1})
+ \m{i}{k}\fnl{B}{i}{2}{0}(0)\right\}
+ \z{i}{1}^{d_i}\fnl{B}{i}{0}{k}+\z{i}{1}^{d_i+1}\fnl{B}{i}{1}{k}(\z{i}{1})
\right] \\
& & 
+ s^n
\z{i}{1}^{d_i-1}\left\{Q^{(n)}_{B, i}(\z{i}{1})
+ W_1(\z{i}{1})
\right\} 
\end{eqnarray*}
for some 
$W_0(\z{i}{1}), W_1(\z{i}{1})\in\CC[[\z{i}{1}]]$. 

Then $\cF_i$ is a torsor over $\cE_i$. 
If $n=1$, $\cF_i$ is naturally isomorphic to $\cE_i$. 
\end{lemma}
\begin{proof}
In the usual $T_{X_i^\circ}$-torsor structure 
on $\cF_i|_{X_i^\circ}$, 
the action of a vector 
$a_0(\z{i}{1})\partial_{w_0} + \z{i}{1}^{d_i-1}a_1(\z{i}{1})\partial_{w_1}$ 
is given by adding $a_0(\z{i}{1})$ and $a_1(\z{i}{1})$ 
to $W_0(\z{i}{1})$ and $W_1(\z{i}{1})$. 
So $\cF_i$ is a torsor over $\cE_i$. 
For $n=1$, we have $Q^{(1)}_{B, i}(\z{i}{1})=0$ 
and so the natural isomorphism 
$\cF_i|_{X_i^\circ}\cong T_{X_i^\circ}$ gives 
$(W_0(\z{i}{1}), W_1(\z{i}{1}))\mapsto 
W_0(\z{i}{1})\partial_{w_0} + \z{i}{1}^{d_i-1}W_1(\z{i}{1})\partial_{w_1}$ 
near $R_i$. 
Thus we have a natural isomorphism 
$\cF_i\cong\cE_i$. 
\end{proof}

Let us return to the proof of Lemma \ref{lem_w0w1_on_X1X2}. 
For $n=1$, 
the vector $v_i$ defined as 
\[
\left\{
\fnl{A}{i}{0}{1}+\z{i}{1}\fnl{A}{i}{1}{1}(\z{i}{1})
\right\}\partial_{w_0}
+\z{i}{1}^{d_i-1}\left\{
\m{i}{1}\fnl{B}{i}{2}{0}(0)
+\z{i}{1}\fnl{B}{i}{0}{1}+\z{i}{1}^2\fnl{B}{i}{1}{1}(\z{i}{1})
\right\}\partial_{w_1} 
\]
must extend to a section of $\cE_i$, 
and so comes from a section of $T_{X_i}$. 
In other words, there exists a polynomial $c_i(\z{i}{1})$ of degree at most $2$ 
such that $v_i=c_i(\z{i}{1})(f_i)_*\partial_{\z{i}{1}}$. 

For $n>1$, 
$\cF_i$ is a trivial torsor since $H^1(\cE_i)=0$. 
So it has a section, 
and by adding an appropriate multiple of $(f_i)_*\partial_{\z{i}{1}}$, 
we have a section with the desired $\fnl{A}{i}{0}{n}$. 
This determines $W_0(\z{i}{1})$ and $W_1(\z{i}{1})$ in the previous lemma. 
Since $\fnl{B}{i}{2}{0}(0)\not=0$ by Lemma \ref{lem_predeformability_0},  
we can find $\m{i}{n}$, $\fnl{A}{i}{1}{n}(\z{i}{1})$, $\fnl{B}{i}{0}{n}$ 
and $\fnl{B}{i}{1}{n}(\z{i}{1})$ so that 
$\fnl{A}{i}{0}{n}+\z{i}{1}\fnl{A}{i}{1}{n}(\z{i}{1})=W_0(\z{i}{1})$ and 
$\m{i}{n}\fnl{B}{i}{2}{0}(0)+\z{i}{1}\fnl{B}{i}{0}{n}+\z{i}{1}^2\fnl{B}{i}{1}{n}(\z{i}{1})=W_1(\z{i}{1})$ hold. 
\end{proof}

\begin{lemma}\label{lem_w1_on_V0_w2_on_X1X2}
If $n<d_1$, 
the condition (\ref{w1_on_V0_w2_on_X1X2}) of Lemma \ref{lem_conditions} 
is always satisfied. 
\end{lemma}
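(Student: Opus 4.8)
The plan is to deduce everything from a single nilpotency fact. Since $\mu_i=\sum_{k=1}^n\m{i}{k}s^k$ is divisible by $s$ and the exponent $d_i\ge d_1\ge n+1$ is at least the nilpotency order of $s$ in $\CC[s]/(s^{n+1})$, we have $\mu_i^{d_i}=0$ there for $i=1,2$. Condition (\ref{w1_on_V0_w2_on_X1X2}) of Lemma \ref{lem_conditions} has two parts — the matching of $\w{i}{1}$ along $\cV_0$, and the extension of $\w{i}{2}$ over $\cX_i^\circ$ — and on the relevant charts each of these functions visibly carries a factor $\mu_i^{d_i}$; so I would simply check that they are identically zero, which makes both parts trivially true for any choice of the $n$-th order data left open by Lemmas \ref{lem_w0_on_V0}, \ref{lem_w2_on_V0} and \ref{lem_w0w1_on_X1X2} (this is the meaning of ``always satisfied'').

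First I would treat $w_1$ on $\cV_0$. On $\cV_0=\cU_1\cap\cU_2$ the patching formulas give $\z{1}{1}=\mu_1\z{2}{2}$ and $\z{2}{1}=\mu_2\z{1}{2}$, with $\z{1}{2},\z{2}{2}$ units there; hence $\z{1}{1}^{d_1}$ and $\z{2}{1}^{d_2}$ are divisible by $\mu_1^{d_1}$ and $\mu_2^{d_2}$ and so vanish. Therefore $\w{1}{1}|_{\cV_0\setminus Q}=\z{1}{1}^{d_1}\fn{B}{1}=0=\z{2}{1}^{d_2}\fn{B}{2}=\w{2}{1}|_{\cV_0\setminus Q}$; the two restrictions agree and the identically zero function extends across $Q$.

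Next I would treat $w_2$ on $\cX_i^\circ$. Here $\cX_i^\circ$ is the subspace lying over $X_i\setminus R_i$, so — using $P_1,P_4\notin V$ and that $(\varphi\circ f)|_{X_i}$ meets $W$ only at $R_i$ — the overlap $\cV\cap\cX_i^\circ$ is contained in the node chart $\cU_i$ at points where $\z{i}{1}$ is a unit; there the defining equation $\z{i}{1}\z{i}{2}=\mu_i$ gives $\z{i}{2}=\mu_i/\z{i}{1}$, and $\fn{C}{i}$ is regular. Thus $\w{i}{2}=\sigma_i\z{i}{2}^{d_i}\fn{C}{i}=\sigma_i\mu_i^{d_i}\z{i}{1}^{-d_i}\fn{C}{i}=0$ on $\cV\cap\cX_i^\circ$, so $\w{i}{2}|_{\cV\cap\cX_i^\circ}$ extends to $\cX_i^\circ$ by the zero function. (Note $Q\notin\cX_i^\circ$, so the pole of $\w{i}{2}$ along $Q$ coming from $D[1]$, handled in Lemma \ref{lem_w2_on_V0}, is invisible here.) This completes the verification of condition (\ref{w1_on_V0_w2_on_X1X2}).

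There is no genuine obstacle; the only thing to be careful about is keeping the charts straight — reading off $\z{1}{1},\z{2}{1}$ in terms of the $\mu$'s on $\cU_1\cap\cU_2$, and $\z{i}{2}$ in terms of $\mu_i$ on $\cU_i$ — and then spotting the factor $\mu_i^{d_i}$. What is worth stressing is that the bound $n<d_1$ is sharp: at order $n=d_1$ the term $\mu_1^{d_1}$ is the first one that survives, and it is precisely this surviving contribution that will obstruct further extension and account for the length $d_1=\min\{D.C_1,D.C_2\}$ of $\fM$ at $[f]$.
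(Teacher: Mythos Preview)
Your proposal is correct and follows exactly the same idea as the paper's proof, just spelled out in more detail: the paper's entire argument is the single line ``In fact, we have $\z{i}{1}^{d_i}=0$ on $\cV_0$ and $\z{i}{2}^{d_i}=0$ on $\cX_i^\circ$,'' and your nilpotency observation $\mu_i^{d_i}=0$ in $\CC[s]/(s^{n+1})$ together with the coordinate identities $\z{i}{1}=\mu_i/\z{i}{2}$ on $\cV_0$ and $\z{i}{2}=\mu_i/\z{i}{1}$ on $\cX_i^\circ$ is precisely how one justifies that line. One small slip: the paper defines $X_i^\circ=X\setminus R_i$, not $X_i\setminus R_i$ as you wrote, but this does not affect the argument since the relevant computation takes place on $\cV_i\cap\cX_i^\circ$ where $\z{i}{1}$ is invertible.
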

\begin{proof}
In fact, we have $\z{i}{1}^{d_i}=0$ on $\cV_0$ 
and $\z{i}{2}^{d_i}=0$ on $\cX_i^\circ$. 
\end{proof}

\begin{lemma}\label{lem_mapped_into_Z1}
If $n<d_1$, 
the condition (\ref{mapped_into_Z1}) of Lemma \ref{lem_conditions} 
is always satisfied for $t=0$. 
If $n\geq d_1$ and $d_1=d_2$, it is satisfiable only if 
$(\m{1}{1})^{d_1}\f{B}{1}(0)\f{C}{1}(0)=(\m{2}{1})^{d_1}\f{B}{2}(0)\f{C}{2}(0)$. 
If $n\geq d_1$ and $d_1<d_2$, it is satisfiable only if $\m{1}{1}=0$. 
\end{lemma}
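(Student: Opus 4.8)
The plan is to extract the coefficient of $s^n$ in the identity $\w{i}{1}\w{i}{2}=t$ and analyze what constraints it imposes, following the same expansion bookkeeping already set up for Lemmas \ref{lem_w0w1_on_X1X2} and \ref{lem_w2_on_V0}. The key observation is that $\w{i}{1}=\z{i}{1}^{d_i}\fn{B}{i}$ and $\w{i}{2}=\sigma_i\z{i}{2}^{d_i}\fn{C}{i}$, so that on the overlap $\cV_i$ (where $\z{i}{1}\z{i}{2}=\mu_i=\sum_{k\ge 1}\m{i}{k}s^k$) we have
\[
\w{i}{1}\w{i}{2}=\sigma_i(\z{i}{1}\z{i}{2})^{d_i}\fn{B}{i}\fn{C}{i}
=\sigma_i\,\mu_i^{d_i}\,(\fn{B}{i}\fn{C}{i}).
\]
Since $\fn{B}{i}\fn{C}{i}\in\CC[s]/(s^{n+1})$ by predeformability, the product $\w{i}{1}\w{i}{2}$ lies in $\CC[s]/(s^{n+1})$, and $t$ is forced to equal it. The first assertion is then immediate: if $n<d_1$ then $\mu_i^{d_i}$ has lowest-order term in $s$ of degree $\ge d_i\ge d_1>n$, hence $\mu_i^{d_i}\equiv 0$ in $\CC[s]/(s^{n+1})$, so $\w{i}{1}\w{i}{2}=0$ and $t=0$ is the only possibility; there is nothing further to satisfy.

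For $n\ge d_1$, I would expand $\mu_i^{d_i}=(\m{i}{1})^{d_i}s^{d_i}+(\text{higher order in }s)$ and evaluate $\fn{B}{i}\fn{C}{i}$ modulo $s$ using Lemma \ref{lem_predeformability_0}: the constant term of $\fn{B}{i}\fn{C}{i}$ is $\f{B}{i}(0)\f{C}{i}(0)$ (it is constant as a polynomial in $s$ of degree $0$, and one reads off its value by setting $\z{i}{1}=\z{i}{2}=s=0$, or more carefully by the mod-$s$ computation preceding Lemma \ref{lem_predeformability_0}, which shows $\fn{B}{i}\fn{C}{i}\equiv\f{B}{i}(0)\f{C}{i}(0)$). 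Hence the lowest-order term of $\w{i}{1}\w{i}{2}$ in $s$ is $\sigma_i(\m{i}{1})^{d_i}\f{B}{i}(0)\f{C}{i}(0)\,s^{d_i}$. If $d_1=d_2=:d$, then both branches contribute a term of order $s^d$, and equating $\w{1}{1}\w{1}{2}=\w{2}{1}\w{2}{2}=t$ at order $s^d$ forces $\sigma_1(\m{1}{1})^d\f{B}{1}(0)\f{C}{1}(0)=\sigma_2(\m{2}{1})^d\f{B}{2}(0)\f{C}{2}(0)$; since $\sigma_1=1$ and $\sigma_2=(-1)^{d_1+d_2}=(-1)^{2d}=1$, this is exactly the stated condition. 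If $d_1<d_2$, then on the $X_2$-side the term $\w{2}{1}\w{2}{2}$ has lowest order $s^{d_2}$, while on the $X_1$-side it has lowest order $s^{d_1}<s^{d_2}$; equality with the common value $t$ then forces the coefficient of $s^{d_1}$ in $\w{1}{1}\w{1}{2}$ to vanish, i.e. $(\m{1}{1})^{d_1}\f{B}{1}(0)\f{C}{1}(0)=0$, and since $\f{B}{1}(0)\ne 0$ (it is nowhere zero by assumption) and $\f{C}{1}(0)=1\ne 0$, we get $\m{1}{1}=0$.

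The only point requiring care — and the mild obstacle — is justifying that the mod-$s$ value $\f{B}{i}(0)\f{C}{i}(0)$ of $\fn{B}{i}\fn{C}{i}$ is really the relevant one, i.e. that one may legitimately read off the $s^{d_i}$-coefficient of the product $\mu_i^{d_i}(\fn{B}{i}\fn{C}{i})$ by multiplying the $s^{d_i}$-coefficient of $\mu_i^{d_i}$ (which is $(\m{i}{1})^{d_i}$, a pure constant) by the $s^0$-coefficient of $\fn{B}{i}\fn{C}{i}$. This is immediate once one knows $\fn{B}{i}\fn{C}{i}\in\CC[s]/(s^{n+1})$: it is a polynomial in $s$ alone with constant term $\f{B}{i}(0)\f{C}{i}(0)$, and multiplication by $\mu_i^{d_i}$ (which has $s$-adic valuation exactly $d_i$ with leading coefficient $(\m{i}{1})^{d_i}$, again a constant) shifts and scales in the obvious way. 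No further analysis of the higher $\fnl{B}{i}{j}{k}$, $\fnl{C}{i}{j}{k}$ is needed for this lemma; those enter only when $n$ grows beyond $d_1$ and one asks for the precise solvability, which is addressed in later lemmas.
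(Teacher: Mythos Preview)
Your proposal is correct and takes essentially the same approach as the paper: the paper's proof consists of the single displayed identity $\w{i}{1}\w{i}{2}=\sigma_i(\z{i}{1}\z{i}{2})^{d_i}\fn{B}{i}\fn{C}{i}\equiv\sigma_i(\m{i}{1})^{d_i}\f{B}{i}(0)\f{C}{i}(0)s^{d_i}\pmod{s^{d_i+1}}$, and you have simply spelled out the surrounding bookkeeping (including the observation that $\sigma_2=1$ when $d_1=d_2$) in more detail.
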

\begin{proof}
Immediate from 
\[
\w{i}{1}\w{i}{2}=\sigma_i(\z{i}{1}\z{i}{2})^{d_i}\fn{B}{i}\fn{C}{i}
\equiv 
 \sigma_i(\m{i}{1})^{d_i}\f{B}{i}(0)\f{C}{i}(0)s^{d_i}
 \mod s^{d_i+1}. 
\]
\end{proof}

\begin{lemma}\label{lem_predeformability}
If $\m{i}{n}$, $\fnl{B}{i}{0}{n}$, $\fnl{C}{i}{0}{n}$, 
$\fnl{B}{i}{1}{n}(\z{i}{1})$ and $\fnl{C}{i}{2}{n}(\z{i}{2})$ are fixed, 
there are unique $\fnl{B}{i}{2}{n}(\z{i}{2})$ and $\fnl{C}{i}{1}{n}(\z{i}{1})$ 
such that the condition (\ref{predeformability}) of Lemma \ref{lem_conditions} 
is satisfied. 
\end{lemma}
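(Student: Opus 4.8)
The plan is to examine the coefficient of $s^n$ in the product $\fn{B}{i}\fn{C}{i}$, written in the normal form used throughout. On $\cV_i$ we have $\z{i}{1}\z{i}{2}=\mu_i=\sum_{k\ge 1}\m{i}{k}s^k$, so every element of $\cO_\cX(\cV_i)$ is uniquely of the shape $\sum_{k=0}^n s^k\{g_{0,k}+\z{i}{1}g_{1,k}(\z{i}{1})+\z{i}{2}g_{2,k}(\z{i}{2})\}$, because each monomial $\z{i}{1}^a\z{i}{2}^b$ with $a,b\ge 1$ equals $\mu_i^{\min(a,b)}$ times a pure power of $\z{i}{1}$ or of $\z{i}{2}$. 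In this language, condition (\ref{predeformability}) of Lemma \ref{lem_conditions} says precisely that, after re-expanding $\fn{B}{i}\fn{C}{i}$ in normal form, all of its $\z{i}{1}$- and $\z{i}{2}$-parts vanish. By the standing inductive hypothesis this already holds modulo $s^n$, so I only need to arrange it for the coefficient of $s^n$.

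First I would isolate how $[s^n](\fn{B}{i}\fn{C}{i})$ depends on the two unknowns $\fnl{B}{i}{2}{n}$ and $\fnl{C}{i}{1}{n}$. Write $\fn{B}{i}=\sum_k s^kB_k$ and $\fn{C}{i}=\sum_l s^lC_l$, where $B_k$ (resp. $C_l$) is the bracketed degree-$k$ (resp. $l$) term. When the products $B_kC_l$ are re-expanded in normal form, any factor $\z{i}{1}\z{i}{2}$ becomes $\mu_i$ and hence raises the $s$-order; for $k=n$ or $l=n$ this pushes the term past $s^n$, so the $s^n$-coefficient of $\fn{B}{i}\fn{C}{i}$ equals $B_nC_0+B_0C_n$ with all $\z{i}{1}\z{i}{2}$ cross-terms deleted, plus a contribution involving only the $B_k,C_l$ with $k,l<n$, which is fixed by the inductive data. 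Using condition (\ref{central_fiber}) at $s=0$ together with Lemma \ref{lem_predeformability_0} to put $C_0=\f{C}{i}(\z{i}{2})+\z{i}{1}\fnl{C}{i}{1}{0}(\z{i}{1})$ and $B_0=\f{B}{i}(\z{i}{1})+\z{i}{2}\fnl{B}{i}{2}{0}(\z{i}{2})$ into normal form, one checks that the contribution of $\fnl{B}{i}{2}{n}$ to $[s^n](\fn{B}{i}\fn{C}{i})$ is exactly $\z{i}{2}\,\fnl{B}{i}{2}{n}(\z{i}{2})\,\f{C}{i}(\z{i}{2})$, a pure $\z{i}{2}$-part, and that the contribution of $\fnl{C}{i}{1}{n}$ is exactly $\z{i}{1}\,\fnl{C}{i}{1}{n}(\z{i}{1})\,\f{B}{i}(\z{i}{1})$, a pure $\z{i}{1}$-part; everything else at order $s^n$ is determined by the fixed data.

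Consequently the two requirements decouple. Vanishing of the $\z{i}{2}$-part of $[s^n](\fn{B}{i}\fn{C}{i})$ is an equation $\fnl{B}{i}{2}{n}(\z{i}{2})\,\f{C}{i}(\z{i}{2})=\Phi_i(\z{i}{2})$ for a known $\Phi_i$; since $\f{C}{i}(\z{i}{2})=\frac{1}{(1-\z{i}{2})^{d_1+d_2}}$ is a unit in $\cO_{X_0}((U_i\cap X_0)\setminus Q)$, it has the unique solution $\fnl{B}{i}{2}{n}=\Phi_i/\f{C}{i}$. Likewise, vanishing of the $\z{i}{1}$-part is an equation $\fnl{C}{i}{1}{n}(\z{i}{1})\,\f{B}{i}(\z{i}{1})=\Psi_i(\z{i}{1})$ for a known $\Psi_i$, and since $f_i$ meets $D$ only at $R_i$ the function $\f{B}{i}$ is a unit in $\cO_{X_i}(V\cap X_i)$, so $\fnl{C}{i}{1}{n}=\Psi_i/\f{B}{i}$ is the unique solution. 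The remaining, constant-in-$\z$, part of $[s^n](\fn{B}{i}\fn{C}{i})$ carries no condition: it is simply the $s^n$-coefficient of the resulting function of $s$ alone. This gives existence and uniqueness. The step that needs the most care is the bookkeeping in the previous paragraph — verifying that the $\z{i}{1}\z{i}{2}$ cross-terms from $B_nC_0$ and $B_0C_n$ genuinely drop at order $s^n$, and that no $s^n$-term other than the two displayed ones involves $\fnl{B}{i}{2}{n}$ or $\fnl{C}{i}{1}{n}$; once that is settled, what remains is the unit-multiplication argument above.
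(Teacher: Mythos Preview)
Your proof is correct and follows essentially the same approach as the paper: isolate the $s^n$-coefficient of $\fn{B}{i}\fn{C}{i}$, observe that the two unknowns enter only through the terms $\z{i}{1}\f{B}{i}(\z{i}{1})\fnl{C}{i}{1}{n}(\z{i}{1})$ and $\z{i}{2}\f{C}{i}(\z{i}{2})\fnl{B}{i}{2}{n}(\z{i}{2})$, and then divide by the units $\f{B}{i}$ and $\f{C}{i}$. You spell out more carefully than the paper why the $\z{i}{1}\z{i}{2}$ cross-terms at level $n$ are pushed past $s^n$ via $\z{i}{1}\z{i}{2}=\mu_i$, but the underlying argument is the same.
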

\begin{proof}
We have only to look at the coefficient of $s^n$ 
in the expansion of $\fn{B}{i}\fn{C}{i}$, 
which is 
\[
\z{i}{1}\f{B}{i}(\z{i}{1})\fnl{C}{i}{1}{n}(\z{i}{1})+
\z{i}{2}\f{C}{i}(\z{i}{2})\fnl{B}{i}{2}{n}(\z{i}{2})+
E_0+\z{i}{1}E_1(\z{i}{1})+\z{i}{2}E_2(\z{i}{2}), 
\]
where $E_0\in\CC$, $E_1(\z{i}{1})\in\cO_{X_i}(V\cap X_i)$ and 
$E_2(\z{i}{2})\in\cO_{X_0}((U_i\cap X_0)\setminus Q)$ 
are independent of $\fnl{B}{i}{2}{n}(\z{i}{2})$ and $\fnl{C}{i}{1}{n}(\z{i}{1})$. 
Since $\f{B}{i}(\z{i}{1})$ and $\f{C}{i}(\z{i}{2})$ 
are nowhere vanishing functions on 
$V\cap X_i$ and $(U_i\cap X_0)\setminus Q$, 
there are unique $\fnl{B}{i}{2}{n}(\z{i}{2})$ and $\fnl{C}{i}{1}{n}(\z{i}{1})$ 
such that the above expression belongs to $\CC$. 
\end{proof}

\begin{lemma}\label{lem_marked_points}
If $n=1$, 
the marked sections $\cP_i$ are mapped to $H_i$ 
if and only if $c_i(\z{i}{1})$ 
is proportional to $1-\z{i}{1}$. 
\end{lemma}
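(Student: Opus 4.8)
The plan is to exploit the explicit description of first-order deformations near $R_i$ furnished by Lemma~\ref{lem_w0w1_on_X1X2}(1), together with the identification $\cF_i\cong\cE_i$ valid for $n=1$, to recognize that $\tilde f$ restricted to $\cX_i^\circ$ is nothing but $f_i=\varphi\circ f|_{X_i}$ reparametrized by a global vector field on $X_i$, and then to read off directly when the four ordinary marked sections land on the hypersurfaces $H_j$.

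\textbf{The deformation is a reparametrization.} For $n=1$ one has $Q^{(1)}_{A,i}=Q^{(1)}_{B,i}=0$, so by Lemma~\ref{lem_w0w1_on_X1X2}(1) the $s$-coefficient of $(\w{i}{0},\w{i}{1})$ on $\cV\cap\cX_i^\circ$ is $c_i(\z{i}{1})$ times the $\z{i}{1}$-derivative of $(\f{A}{i}(\z{i}{1}),\z{i}{1}^{d_i}\f{B}{i}(\z{i}{1}))$; in other words, modulo $s^2$ the map $\varphi\circ\tilde f$ on $\cX_i^\circ$ agrees with $f_i$ precomposed with $\z{i}{1}\mapsto\z{i}{1}+s\,c_i(\z{i}{1})$. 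Since $\cF_i\cong\cE_i$ for $n=1$ and $\Gamma(\cE_i)=\Gamma(T_{X_i})$, the section of $\cF_i$ describing $\varphi\circ\tilde f|_{\cX_i^\circ}$ comes from the global vector field $c_i(\z{i}{1})\partial_{\z{i}{1}}\in\Gamma(T_{X_i})$, so that $c_i$ has degree at most $2$ and the reparametrization description holds over all of $\cX_i^\circ$, in particular near $P_1$ and $P_4$, which lie outside $V$.

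\textbf{Evaluating at the marked sections.} In the coordinate $\z{1}{1}$ on $X_1$, for which $R_1=\{\z{1}{1}=0\}$, the section $\cP_1$ is the constant section $\z{1}{0}=0$, i.e.\ $\z{1}{1}=\infty$, and $\cP_2$ is the constant section $\z{1}{1}=1$; symmetrically $\cP_4$ is $\z{2}{1}=\infty$ and $\cP_3$ is $\z{2}{1}=1$. Writing $i$ for the component index of $\cP_j$, the previous step gives $\varphi(\tilde f(\cP_j))=f_i(P_j)+s\,(df_i)(c_i\partial_{\z{i}{1}}|_{P_j})$ modulo $s^2$. As $f_i$ is immersive, $(df_i)(\partial_{\z{i}{1}})|_{P_j}$ is a nonzero vector spanning $T_{f_i(P_j)}C_i$, so to first order this point is fixed precisely when $c_i(\z{i}{1})\partial_{\z{i}{1}}$ vanishes at $P_j$, and otherwise it moves along $C_i$. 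Now $\tilde f$ carries $\cP_j$ into $H_j$ iff this first-order displacement lies in $T_{f(P_j)}H_j$; taking $H_j=\varphi^{-1}(\gamma_j)$ with $\gamma_j\subset Z$ a hypersurface meeting $C_i$ transversally at $\varphi(f(P_j))$ — which is permissible, $H_j$ being otherwise unconstrained and the length of $\fM$ being independent of the choice, while for $d_1\geq 2$ no choice is needed since then $t=0$ for $n=1$ by Lemma~\ref{lem_mapped_into_Z1} — the displacement lies in $T_{f(P_j)}H_j$ if and only if $c_i$ vanishes at $P_j$.

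\textbf{Conclusion.} A section $(a+b\z{i}{1}+c\z{i}{1}^2)\partial_{\z{i}{1}}$ of $T_{\PP^1}$ vanishes at $\z{i}{1}=\infty$ iff $c=0$, i.e.\ $\deg c_i\leq 1$, and vanishes at $\z{i}{1}=1$ iff $c_i(1)=0$. Hence $\tilde f$ carries $\cP_1,\cP_2$ into $H_1,H_2$ exactly when $\deg c_1\leq 1$ and $c_1(1)=0$, i.e.\ when $c_1(\z{1}{1})$ is a constant multiple of $1-\z{1}{1}$; likewise $c_2(\z{2}{1})$ must be proportional to $1-\z{2}{1}$. Conversely such $c_i$ vanish at both $\z{i}{1}=1$ and $\z{i}{1}=\infty$, so all four sections stay on the $H_j$, which proves the equivalence. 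The one delicate point is the first step, that deforming $f|_{X_i}$ can only reparametrize it, but this is exactly the content of Lemma~\ref{lem_w0w1_on_X1X2}(1) together with $H^1(\cE_i)=0$.
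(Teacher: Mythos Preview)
Your proof is correct and follows the same idea as the paper's one-line argument (``the images of $1$ and $\infty$ do not move''): the first-order deformation on $X_i$ is the reparametrization by $c_i(\z{i}{1})\partial_{\z{i}{1}}$, so the marked points stay on transversal hypersurfaces precisely when this vector field vanishes at $\z{i}{1}=1$ and $\z{i}{1}=\infty$. You have simply unpacked the details the paper leaves implicit, and your remark on the choice of $H_j$ (taking $H_j=\varphi^{-1}(\gamma_j)$, or invoking $t^{(1)}=0$ when $d_1\geq 2$) usefully addresses a point the paper glosses over.
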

\begin{proof}
The condition here says that 
the images of $1$ and $\infty$ do not move. 
\end{proof}

\subsection{Tangent space}

Let us consider the case $n=1$. 

\begin{lemma}\label{lem_nodef_if_muiszero}
There are no nontrivial first order deformation in $\tilde{\fM}''$ 
with $\m{1}{1}=\m{2}{1}=0$. 
\end{lemma}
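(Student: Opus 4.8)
The plan is to unwind the first order deformation data of \S3.3 piece by piece, using the constraints of Lemma~\ref{lem_conditions} together with the auxiliary Lemmas~\ref{lem_predeformability_0}--\ref{lem_marked_points}, and to conclude that when $\m{1}{1}=\m{2}{1}=0$ every coefficient is forced to vanish. Throughout $S_1=\Spec\CC[s]/(s^2)$ and the notation is that of \S3.3. First I would note that $t^{(1)}=0$: since $\m{1}{1}=\m{2}{1}=0$ we have $\mu_i=0$ in $\CC[s]/(s^2)$, so on $\cU_i$ one has $\z{i}{1}\z{i}{2}=\mu_i=0$ and hence $\w{i}{1}\w{i}{2}=\sigma_i(\z{i}{1}\z{i}{2})^{d_i}\fn{B}{i}\fn{C}{i}=0$; by condition~(\ref{mapped_into_Z1}) of Lemma~\ref{lem_conditions} this forces $t=0$ (cf.\ Lemma~\ref{lem_mapped_into_Z1}, using $\mu_1=0$ directly when $d_1=1$). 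So the target $Z[1]_{S_1}$ and the domain $\cX$ are the trivial first order deformations of $Z[1]_0$ and $X$.

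Next I would show that $f|_{X_1}$ and $f|_{X_2}$ are undeformed. By Lemma~\ref{lem_w0w1_on_X1X2}(1) the deformation on $\cX_i^\circ$ is governed by a polynomial $c_i(\z{i}{1})$ of degree $\le 2$; putting $\m{i}{1}=0$ in the second of the two displayed identities there, both sides become divisible by $\z{i}{1}^{d_i-1}$, and after cancelling it, evaluation at $\z{i}{1}=0$ gives $c_i(0)\,d_i\f{B}{i}(0)=0$, so $\z{i}{1}\mid c_i(\z{i}{1})$ since $\f{B}{i}(0)\ne 0$. On the other hand, in $\tilde{\fM}''\subset\tilde{\fM}'$ the sections $\cP_j$ map to $H_j$, so by Lemma~\ref{lem_marked_points} $c_i(\z{i}{1})$ is proportional to $1-\z{i}{1}$; a polynomial divisible by $\z{i}{1}$ and proportional to $1-\z{i}{1}$ is zero, hence $c_i=0$. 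Feeding $c_i=0$ back into Lemma~\ref{lem_w0w1_on_X1X2}(1) yields $\fnl{A}{i}{0}{1}=\fnl{A}{i}{1}{1}=\fnl{B}{i}{0}{1}=\fnl{B}{i}{1}{1}=0$, and through the identification $\cF_i\cong\cE_i$ (valid for $n=1$) the deformation restricted to $X_i$ is trivial.

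Finally I would treat $X_0$. Now the domain, the target and $f|_{X_1},f|_{X_2}$ are fixed, so the images of $R_1,R_2,Q$ in the fiber of $\Delta_1\to D$ over $P$ are fixed, and what remains is the deformation of $w_2\circ f|_{X_0}\colon X_0\to\PP^1$, i.e.\ the coefficients $\fnl{A}{i}{2}{1},\fnl{B}{i}{2}{1},\fnl{C}{i}{0}{1},\fnl{C}{i}{1}{1},\fnl{C}{i}{2}{1}$. Since $\mu_i=0$ we have $R^{(1)}_{A, i}=0$, so Lemma~\ref{lem_w0_on_V0} together with the gluing $\z{1}{2}=1/\z{2}{2}$ and the requirement that $\w{i}{0}$ extend over $Q$ forces $\fnl{A}{i}{2}{1}=0$. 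Lemma~\ref{lem_w2_on_V0} determines $\fnl{C}{1}{0}{1},\fnl{C}{2}{0}{1},\fnl{C}{1}{2}{1},\fnl{C}{2}{2}{1}$ up to a one-dimensional freedom, which is precisely the infinitesimal rescaling of $w_2\circ f|_{X_0}$, i.e.\ the infinitesimal $\bG_m$-action; the condition $\cQ'\to H'$ cutting out $\tilde{\fM}''$ removes this freedom, so these coefficients vanish. Then Lemma~\ref{lem_predeformability} determines $\fnl{B}{i}{2}{1}$ and $\fnl{C}{i}{1}{1}$ from the (now vanishing) remaining data, so they vanish too. Hence all first order coefficients are zero and the deformation is trivial.

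The step I expect to be the main obstacle is the treatment of $X_0$: one must check carefully that, once $f|_{X_1}$ and $f|_{X_2}$ are rigid, the one-dimensional freedom furnished by Lemma~\ref{lem_w2_on_V0} really consists only of the $\bG_m$-scaling — that no further wiggling of $f|_{X_0}$ is compatible with all of the gluing conditions and the pure-contact conditions at $R_1$ and $R_2$ — and that $\cQ'\to H'$ kills exactly that one direction. By contrast the first two steps are essentially formal consequences of the lemmas already established.
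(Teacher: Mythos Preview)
Your argument is correct and follows essentially the same route as the paper: use Lemma~\ref{lem_w0w1_on_X1X2}(1) with $\m{i}{1}=0$ to get $c_i(0)=0$, combine with Lemma~\ref{lem_marked_points} to conclude $c_i\equiv 0$, and then run through Lemmas~\ref{lem_w0_on_V0}, \ref{lem_w2_on_V0} and \ref{lem_predeformability} (plus the $\cQ'\to H'$ condition) to pin down the remaining coefficients. The only stylistic difference is that the paper phrases the last part as ``each remaining coefficient is uniquely determined'' (hence equal to the trivial choice), whereas you explicitly chase each coefficient to zero and interpret the one-parameter freedom in Lemma~\ref{lem_w2_on_V0} as the infinitesimal $\bG_m$-action; both formulations are equivalent, and your worry about the $X_0$ step is unfounded --- once $\fnl{A}{i}{0}{1}=\fnl{A}{i}{1}{1}=0$ and $R^{(1)}_{A,i}=R^{(1)}_{D,i}=0$, Lemmas~\ref{lem_w0_on_V0} and \ref{lem_w2_on_V0} already force everything up to the single $\bG_m$-direction, which $\cQ'\to H'$ kills (this is exactly why $\tilde{\fM}''\to\fM$ was shown to be an isomorphism).
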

\begin{proof}
From Lemma \ref{lem_w0w1_on_X1X2}, 
we have 
\begin{eqnarray}
\fnl{A}{i}{0}{1} & = & c_i(0)\f{A}{i}(0), \label{eqn_A_i01}\\
\m{i}{1}\fnl{B}{i}{2}{0}(0) & = & d_ic_i(0)\f{B}{i}(0). \label{eqn_a_i1_B_i20}
\end{eqnarray}
If $\m{1}{1}=\m{2}{1}=0$, 
then $c_i(0)=0$, 
and Lemma \ref{lem_marked_points} shows that $c(\z{i}{1})=0$. 
Thus $\fnl{A}{i}{0}{1}+\z{i}{1}\fnl{A}{i}{1}{1}(\z{i}{1})=
\fnl{B}{i}{0}{1}+\z{i}{1}\fnl{B}{i}{1}{1}(\z{i}{1})=0$. 
The functions 
$\fnl{A}{i}{2}{1}(\z{i}{2})$ are uniquely determined 
by Lemma \ref{lem_w0_on_V0}. 
The constants $\fnl{C}{i}{0}{1}$ and functions $\fnl{C}{i}{2}{1}(\z{i}{2})$ 
are determined up to one dimensional parameter 
by Lemma \ref{lem_w2_on_V0}, 
and the condition $\tilde{f}(\cQ'(S))\subseteq H'$ makes them unique. 
Finally, 
$\fnl{B}{i}{2}{1}(\z{i}{2})$ and $\fnl{C}{i}{1}{1}(\z{i}{1})$ are unique 
by Lemma \ref{lem_predeformability}. 
\end{proof}

\begin{corollary}
The natural morphism $\tilde{\fM}''\to\cM$ is a closed immersion. 
\end{corollary}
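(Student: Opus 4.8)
The plan is to reduce the statement to a tangent-space computation via a standard criterion. Both $\tilde{\fM}''$ and $\cM$ are formal neighborhoods of closed points in schemes locally of finite type over $\CC$ --- namely the locus in (the completion of) $\fM((Z[1],D[1]),\Gamma_{\beta,4})^{st}$ cut out by the incidence conditions $\cP_i\mapsto H_i$ and $\cQ'\mapsto H'$, and the moduli scheme $\cM_{0,5}$ --- so each is the formal spectrum of a complete local noetherian $\CC$-algebra with residue field $\CC$. For such a morphism, being a closed immersion is equivalent to the corresponding ring homomorphism being surjective, and by the completed form of Nakayama's lemma this is in turn equivalent to the induced map on tangent spaces being injective (the image of a complete local ring is closed, so once the cotangent map is surjective one concludes surjectivity by successive approximation).

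So first I would identify the map on tangent spaces. By construction $T_0\cM=\CC^2$, with coordinates the first-order parts $\m{1}{1}$ and $\m{2}{1}$ of the two node-smoothing parameters $\mu_1,\mu_2$. A tangent vector of $\tilde{\fM}''$ at $[f]$ is exactly a deformation over $S_1=\Spec\CC[s]/(s^2)$ of the kind set up above, and the natural morphism $\tilde{\fM}''\to\cM$ sends such a deformation to the underlying first-order deformation of the $5$-pointed prestable curve $X$, which is recorded precisely by the pair $(\m{1}{1},\m{2}{1})$. Hence the kernel of $T_{[f]}\tilde{\fM}''\to T_0\cM$ is exactly the space of first-order deformations in $\tilde{\fM}''$ with $\m{1}{1}=\m{2}{1}=0$.

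It then remains only to invoke Lemma~\ref{lem_nodef_if_muiszero}, which asserts precisely that this kernel is trivial. Therefore $T_{[f]}\tilde{\fM}''\to T_0\cM$ is injective, the ring homomorphism $\CC[[\mu_1,\mu_2]]\to\cO_{\tilde{\fM}'',[f]}$ is surjective, and $\tilde{\fM}''\to\cM$ is a closed immersion.

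All the real content is absorbed into Lemma~\ref{lem_nodef_if_muiszero}, so I do not expect a genuine obstacle. The only point deserving a sentence of justification is that the natural morphism really does record the deformation of $X$ via $(\m{1}{1},\m{2}{1})$; this is immediate, since $\mu_1,\mu_2$ were chosen as the node-smoothing coordinates on $\cM$ and the morphism in question is the forgetful one sending a relative stable morphism to its source curve together with its five marked sections, which for $X$ and nearby curves is already stable and so requires no contraction.
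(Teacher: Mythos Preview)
Your proof is correct and follows essentially the same approach as the paper: both reduce to Lemma~\ref{lem_nodef_if_muiszero} and then conclude surjectivity of the ring map via a Nakayama-type argument. The only cosmetic difference is that the paper phrases the lemma's content as ``the fiber over $(\mu_1^{(1)},\mu_2^{(1)})=(0,0)$ is a reduced point'' and uses that $\tilde{\fM}''\cong\fM$ is already known to be artinian (finite-dimensional over $\CC$), whereas you phrase it as injectivity of the tangent map and invoke the general complete-local-noetherian criterion; these are equivalent formulations of the same argument.
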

\begin{proof}
The previous lemma says that the fiber over $(\m{1}{1}, \m{2}{1})=(0, 0)$ 
is a reduced point. 
Since $\tilde{\fM}''$ is the spectrum of a local ring which is finite dimensional 
as a $\CC$-vector space, 
the morphism $\tilde{\fM}''\to\cM$ is a closed immersion. 
\end{proof}

So we assume $(\m{1}{1}, \m{2}{1})\not=(0, 0)$ from now on. 

\begin{lemma}
(1)
If the conditions of Lemma \ref{lem_conditions} are satisfied with $n=1$ 
and $(\m{1}{1}, \m{2}{1})\not=(0, 0)$, 
then $\m{1}{1}: \m{2}{1} =  d_1\f{A}{2}'(0) : d_2\f{A}{1}'(0)$. 

(2)
The tangent space to $\tilde{\fM}''$ is at most $1$-dimensional. 
Thus $\tilde{\fM}''$ is isomorphic to $S_n$ for some $n$. 
\end{lemma}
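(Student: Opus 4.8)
The plan is to deduce part (1) by a short computation from the two relations established at the start of the proof of Lemma~\ref{lem_nodef_if_muiszero}, which come from evaluating the two displayed equations of Lemma~\ref{lem_w0w1_on_X1X2}(1) at $\z{i}{1}=0$: namely $\fnl{A}{i}{0}{1}=c_i(0)\f{A}{i}'(0)$ and $\m{i}{1}\fnl{B}{i}{2}{0}(0)=d_ic_i(0)\f{B}{i}(0)$. First I would make $\fnl{B}{i}{2}{0}(0)$ explicit: since $\f{C}{i}(\z{i}{2})=(1-\z{i}{2})^{-(d_1+d_2)}$, Lemma~\ref{lem_predeformability_0} gives $\fnl{B}{i}{2}{0}(0)=-(d_1+d_2)\f{B}{i}(0)$, so dividing by $\f{B}{i}(0)\neq 0$ yields $\m{i}{1}=-d_ic_i(0)/(d_1+d_2)$ and hence $\m{1}{1}:\m{2}{1}=d_1c_1(0):d_2c_2(0)$. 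Next I would use that condition~(\ref{w0_on_V0}) of Lemma~\ref{lem_conditions}, through Lemma~\ref{lem_w0_on_V0}, forces $\fnl{A}{1}{0}{1}=\fnl{A}{2}{0}{1}$, i.e.\ $c_1(0)\f{A}{1}'(0)=c_2(0)\f{A}{2}'(0)$. Since $\f{A}{i}'(0)\neq 0$, the vanishing of one of $c_1(0),c_2(0)$ would force the other, hence both $\m{1}{1},\m{2}{1}$, to vanish, contrary to the hypothesis $(\m{1}{1},\m{2}{1})\neq(0,0)$; so $c_1(0),c_2(0)\neq 0$ and $c_1(0):c_2(0)=\f{A}{2}'(0):\f{A}{1}'(0)$. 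Combining the two proportions gives $\m{1}{1}:\m{2}{1}=d_1\f{A}{2}'(0):d_2\f{A}{1}'(0)$.

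For part (2) the plan is a linear-algebra argument. The tangent space to $\tilde{\fM}''$ at $[f,P_1,\dots,P_4]$ is the set of families over $S_1$, and I would consider the linear map sending such a family to $(\m{1}{1},\m{2}{1})$, that is, the differential of the natural morphism $\tilde{\fM}''\to\cM$. Its kernel consists of the first-order deformations with $\m{1}{1}=\m{2}{1}=0$, which is zero by Lemma~\ref{lem_nodef_if_muiszero}, and its image is a linear subspace of $\CC^2$ all of whose nonzero vectors lie on the single line $d_2\f{A}{1}'(0)\,x=d_1\f{A}{2}'(0)\,y$ by part (1); hence the tangent space has dimension at most $1$. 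Finally, since $\tilde{\fM}''\cong\fM$ is the spectrum of a local Artinian $\CC$-algebra $R$, the inequality $\dim_\CC\mathfrak{m}_R/\mathfrak{m}_R^2\leq 1$ makes $\mathfrak{m}_R$ principal by Nakayama, and then, $R$ being Artinian, $R\cong\CC[s]/(s^{n+1})=S_n$ for some $n\geq 0$.

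I do not anticipate a real obstacle; both parts are quick consequences of the computations of \S3. The only points requiring a little care are the step excluding the degenerate proportion ($c_1(0),c_2(0)\neq 0$) in part (1), and the edge case $d_1=1$, where $n=1$ is not $<d_1$ so that Lemmas~\ref{lem_w1_on_V0_w2_on_X1X2} and \ref{lem_mapped_into_Z1} may impose further conditions; but any such conditions can only shrink the tangent space, so the bound of part (2)---and hence the final statement---is unaffected.
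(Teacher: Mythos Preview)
Your proposal is correct and follows essentially the same route as the paper: both derive part (1) from the identities $\fnl{A}{i}{0}{1}=c_i(0)\f{A}{i}'(0)$ and $\m{i}{1}\fnl{B}{i}{2}{0}(0)=d_ic_i(0)\f{B}{i}(0)$ together with Lemma~\ref{lem_predeformability_0} and Lemma~\ref{lem_w0_on_V0}, and both obtain part (2) by combining (1) with the injectivity of the tangent map $\tilde{\fM}''\to\cM$ (Lemma~\ref{lem_nodef_if_muiszero} and its Corollary). Your version simply substitutes $\f{C}{i}(0)=1$, $\f{C}{i}'(0)=d_1+d_2$ one step earlier than the paper does and spells out the linear-algebra step for (2) more explicitly; incidentally, you have the first identity right as $c_i(0)\f{A}{i}'(0)$, which is what the paper's own argument requires (the displayed equation~(\ref{eqn_A_i01}) in the paper has $\f{A}{i}(0)$, evidently a typo).
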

\begin{proof}
(1)
By the equality (\ref{eqn_a_i1_B_i20}) in the proof of the previous lemma 
and Lemma \ref{lem_predeformability_0}, 
we see that $(c_1(0), c_2(0))\not=(0, 0)$. 
Using Lemma \ref{lem_predeformability_0} and Lemma \ref{lem_w0_on_V0}, 
we have 
\begin{eqnarray*}
\m{1}{1}: \m{2}{1} & = & 
\frac{d_1c_1(0)\f{B}{1}(0)}{\fnl{B}{1}{2}{0}(0)} : 
\frac{d_2c_2(0)\f{B}{2}(0)}{\fnl{B}{2}{2}{0}(0)} \\
 & = & \frac{d_1\f{C}{1}(0)}{\f{A}{1}'(0)\f{C}{1}'(0)} : 
\frac{d_2\f{C}{2}(0)}{\f{A}{2}'(0)\f{C}{2}'(0)}. 
\end{eqnarray*}
From the concrete form of $\f{C}{i}(\z{i}{2})$, 
this is equal to $d_1\f{A}{2}'(0) : d_2\f{A}{1}'(0)$. 

(2)
This follows from (1) and the previous corollary. 
\end{proof}

\begin{lemma}
If the conditions of Lemma \ref{lem_conditions} are satisfied 
and $(\m{1}{1}, \m{2}{1})\not=(0, 0)$, 
then $n<d_1$. 

In particular, $\tilde{\fM}''$ is reduced if $d_1=1$. 
\end{lemma}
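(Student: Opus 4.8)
The plan is to argue by contradiction: suppose $n\geq d_1$ (so in particular $n\geq 1$), and derive a contradiction with the tangency hypothesis $(C_1.C_2)_P=\min\{D.C_1,D.C_2\}$, which equals $d_1$ since we have arranged $d_1\leq d_2$. First I would extract the first-order information. The first-order truncation of the tautological family over $\tilde{\fM}''\cong S_n$ is a first-order deformation lying in $\tilde{\fM}''$ with $s^1$-parameters $\m{1}{1},\m{2}{1}$, so the preceding lemma gives
\[
\m{1}{1}:\m{2}{1}=d_1\f{A}{2}'(0):d_2\f{A}{1}'(0).
\]
Writing $\m{1}{1}=c\,d_1\f{A}{2}'(0)$ and $\m{2}{1}=c\,d_2\f{A}{1}'(0)$ and using $\f{A}{i}'(0)\neq 0$ (our normalization of coordinates), the hypothesis $(\m{1}{1},\m{2}{1})\neq(0,0)$ forces $c\neq 0$, so both $\m{1}{1}$ and $\m{2}{1}$ are nonzero.

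Next I would feed $n\geq d_1$ into Lemma \ref{lem_mapped_into_Z1}, which applies since the conditions of Lemma \ref{lem_conditions} are assumed to hold. If $d_1<d_2$, it forces $\m{1}{1}=0$, contradicting the previous paragraph; so that case is finished. If $d_1=d_2=:d$, it gives $(\m{1}{1})^d\f{B}{1}(0)\f{C}{1}(0)=(\m{2}{1})^d\f{B}{2}(0)\f{C}{2}(0)$; since $\f{C}{i}(0)=1$ and $\m{1}{1}/\m{2}{1}=\f{A}{2}'(0)/\f{A}{1}'(0)$, this collapses to
\[
\f{A}{2}'(0)^d\,\f{B}{1}(0)=\f{A}{1}'(0)^d\,\f{B}{2}(0).
\]

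It then remains to compute $(C_1.C_2)_P$ in the coordinates $(w_0,w_1)$ on $W$ and reach a contradiction. Since $\f{A}{i}'(0)\neq0$ and $C_i$ is a smooth branch at $P$, near $P$ the curve $C_i$ is the graph $w_1=\phi_i(w_0)$ with
\[
\phi_i(w_0)=\frac{\f{B}{i}(0)}{\f{A}{i}'(0)^{d_i}}\bigl(w_0-w_0(P)\bigr)^{d_i}+(\text{higher order}),\qquad w_0(P)=\f{A}{1}(0)=\f{A}{2}(0),
\]
so $(C_1.C_2)_P$ is the order of vanishing of $\phi_1-\phi_2$ at $w_0=w_0(P)$. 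When $d_1=d_2=d$, this order equals $d$ exactly when $\f{B}{1}(0)/\f{A}{1}'(0)^{d}\neq\f{B}{2}(0)/\f{A}{2}'(0)^{d}$, i.e. $\f{A}{2}'(0)^d\f{B}{1}(0)\neq\f{A}{1}'(0)^d\f{B}{2}(0)$; since $(C_1.C_2)_P=d$ by hypothesis, this contradicts the identity of the previous paragraph. Hence $n\geq d_1$ is impossible in either case, so $n<d_1$; in particular, for $d_1=1$ this forces $n=0$, so $\tilde{\fM}''\cong\Spec\CC$ is reduced.

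I expect the main obstacle to be this last step: verifying that the algebraic identity forced at order $s^d$ by the ``mapped into $Z[1]$'' condition is exactly the failure of the contact condition $(C_1.C_2)_P=d$. This is the only place where the tangency hypothesis is used; in the case $d_1<d_2$ the argument is purely formal and the hypothesis is automatic.
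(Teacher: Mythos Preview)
Your proof is correct and follows essentially the same approach as the paper's: argue by contradiction, invoke Lemma~\ref{lem_mapped_into_Z1} together with the ratio from the preceding lemma, and in the equal-degree case translate the resulting identity into the statement $(C_1.C_2)_P>d_1$. The only difference is that you spell out the local parametrizations $w_1=\phi_i(w_0)$ to compute $(C_1.C_2)_P$ explicitly, whereas the paper simply asserts that the identity $(\f{A}{2}'(0))^{d_1}\f{B}{1}(0)=(\f{A}{1}'(0))^{d_1}\f{B}{2}(0)$ forces the intersection multiplicity to exceed $d_1$; your added detail is a welcome clarification of that step.
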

\begin{proof}
Assume that $n\geq d_1$. 

If $d_1=d_2$, Lemma \ref{lem_mapped_into_Z1} gives 
$(\m{1}{1})^{d_1}\f{B}{1}(0)=(\m{2}{1})^{d_1}\f{B}{2}(0)$, 
and the previous lemma gives 
\[
(\f{A}{2}'(0))^{d_1}\f{B}{1}(0)=(\f{A}{1}'(0))^{d_1}\f{B}{2}(0). 
\]
This implies that the intersection multiplicity of 
$C_1$ and $C_2$ at $P$ is greater than $d_1$, 
which contradicts the assumption. 

If $d_1<d_2$, Lemma \ref{lem_mapped_into_Z1} says $\m{1}{1}=0$, 
which contradicts the previous lemma. 
\end{proof}

\begin{lemma}
If $d_1>1$, 
there exists a non-trivial first order deformation of $f$. 
\end{lemma}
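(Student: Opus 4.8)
The plan is to exhibit, with $n=1$, a solution of all the conditions of Lemma~\ref{lem_conditions} --- together with the conditions that cut out $\tilde{\fM}''$ inside $\tilde{\fM}$, namely that $\cP_i$ is mapped to $H_i$ and $\cQ'$ is mapped to $H'$ --- for which $(\m{1}{1},\m{2}{1})\neq(0,0)$. The associated morphism $\mu\colon S_1\to\cM$ is then a nonzero tangent vector, so the resulting deformation of $\cX$, hence of $f$, is nontrivial; via the isomorphism $\tilde{\fM}''\cong\fM$ this is exactly the desired nontrivial first order deformation of $f$.

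First I would choose the polynomials $c_i(\z{i}{1})$ occurring in Lemma~\ref{lem_w0w1_on_X1X2}. Since we are inside $\tilde{\fM}''$, the sections $\cP_i$ must land on $H_i$, so by Lemma~\ref{lem_marked_points} one is forced to take $c_i(\z{i}{1})=\gamma_i(1-\z{i}{1})$ for scalars $\gamma_i$. With this choice Lemma~\ref{lem_w0w1_on_X1X2}(1) determines $\fnl{A}{i}{1}{1}(\z{i}{1})$, $\fnl{B}{i}{0}{1}$ and $\fnl{B}{i}{1}{1}(\z{i}{1})$, and --- evaluating its two equations at $\z{i}{1}=0$ --- also
\[
\fnl{A}{i}{0}{1}=\gamma_i\f{A}{i}'(0),\qquad \m{i}{1}\fnl{B}{i}{2}{0}(0)=\gamma_i d_i\f{B}{i}(0).
\]
By Lemma~\ref{lem_w0_on_V0}, condition (\ref{w0_on_V0}) of Lemma~\ref{lem_conditions} is satisfiable precisely when $\fnl{A}{1}{0}{1}=\fnl{A}{2}{0}{1}$, i.e. $\gamma_1\f{A}{1}'(0)=\gamma_2\f{A}{2}'(0)$. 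Since $\f{A}{i}'(0)\neq 0$ for $i=1,2$ by the normalization fixed in the preparation, I would set $\gamma_1=\f{A}{2}'(0)$, $\gamma_2=\f{A}{1}'(0)$; then $\gamma_i\neq 0$, and because $\fnl{B}{i}{2}{0}(0)\neq 0$ by Lemma~\ref{lem_predeformability_0} this gives $\m{i}{1}\neq 0$ for both $i$, exactly what is needed for nontriviality. Lemma~\ref{lem_w0_on_V0} then supplies the (unique) $\fnl{A}{i}{2}{1}(\z{i}{2})$.

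To finish, I would check the remaining conditions in turn. Setting $t=0$ and using $n=1<d_1$ --- this is the only place the hypothesis $d_1>1$ enters --- Lemmas~\ref{lem_w1_on_V0_w2_on_X1X2} and~\ref{lem_mapped_into_Z1} make conditions (\ref{w1_on_V0_w2_on_X1X2}) and (\ref{mapped_into_Z1}) of Lemma~\ref{lem_conditions} automatic. Lemma~\ref{lem_w2_on_V0} produces $\fnl{C}{i}{0}{1}$ and $\fnl{C}{i}{2}{1}(\z{i}{2})$ satisfying condition (\ref{w2_on_V0}) with a one-dimensional ambiguity, which the requirement that $\cQ'$ maps to $H'$ removes; Lemma~\ref{lem_predeformability} then pins down the last coefficients $\fnl{B}{i}{2}{1}(\z{i}{2})$ and $\fnl{C}{i}{1}{1}(\z{i}{1})$, giving condition (\ref{predeformability}); and condition (\ref{central_fiber}) holds by construction, the $0$-th order terms being those of $f$. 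Hence the data define a point of $\tilde{\fM}''$ over $S_1$ with $\m{i}{1}\neq 0$, which is the required nontrivial first order deformation of $f$.

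The bulk of this argument is bookkeeping: tracking which lemma determines which coefficient. The one point requiring care, and the heart of the matter, is that $\fnl{A}{i}{0}{1}$ and $\m{i}{1}$ are both \emph{nonzero} scalar multiples of $\gamma_i=c_i(0)$, so the matching requirement $\fnl{A}{1}{0}{1}=\fnl{A}{2}{0}{1}$ can be met without forcing $\m{i}{1}=0$; and that $d_1>1$ is precisely what makes conditions (\ref{w1_on_V0_w2_on_X1X2}) and (\ref{mapped_into_Z1}) vacuous at first order (for $d_1=1$ they obstruct, in agreement with the earlier conclusion that $\tilde{\fM}''$ is then a reduced point).
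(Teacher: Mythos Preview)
Your proof is correct and follows essentially the same route as the paper's: fix the parameters so that the compatibility $\fnl{A}{1}{0}{1}=\fnl{A}{2}{0}{1}$ holds with $(\m{1}{1},\m{2}{1})\neq(0,0)$, then invoke Lemmas~\ref{lem_w0_on_V0}--\ref{lem_predeformability} in turn, using $n=1<d_1$ for conditions (\ref{w1_on_V0_w2_on_X1X2}) and (\ref{mapped_into_Z1}). The only cosmetic difference is that the paper constructs the deformation in $\tilde{\fM}$ (allowing any $c_i$ with the correct constant term, and reading off nontriviality from the family $\cX$), whereas you work directly in $\tilde{\fM}''\cong\fM$ by imposing the marked-point and $\cQ'\mapsto H'$ constraints from the start; the substance is the same.
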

\begin{proof}
We deform $(f, P_1, \dots, P_4)$ in $\tilde{\fM}$. 

Let $(\m{1}{1}, \m{2}{1}) =  (d_1\f{A}{2}'(0), d_2\f{A}{1}'(0))$, 
and take any $c_i(\z{i}{1})$ satisfying 
equality (\ref{eqn_a_i1_B_i20})
in the proof of Lemma \ref{lem_nodef_if_muiszero}. 
Then one can find 
$\fnl{A}{i}{0}{1}$, $\fnl{A}{i}{1}{1}(\z{i}{1})$, 
$\fnl{B}{i}{0}{1}$ and $\fnl{B}{i}{1}{1}(\z{i}{1})$ 
satisfying the conditions of Lemma \ref{lem_w0w1_on_X1X2}(1). 
By our choice of $(\m{1}{1}, \m{2}{1})$, 
we have $\fnl{A}{1}{0}{1}=\fnl{A}{2}{0}{1}$, 
and one can find $\fnl{A}{i}{2}{1}(\z{i}{2})$ 
as in Lemma \ref{lem_w0_on_V0}. 
Choose $\fnl{C}{i}{0}{1}$ and $\fnl{C}{i}{2}{1}(\z{i}{2})$ 
as in Lemma \ref{lem_w2_on_V0}. 

Then conditions (\ref{w0_on_V0}), (\ref{w2_on_V0}), 
(\ref{w0w1_on_X1X2}) 
of Lemma \ref{lem_conditions} are satisfied. 
By Lemma \ref{lem_w1_on_V0_w2_on_X1X2} 
and Lemma \ref{lem_mapped_into_Z1}, 
conditions (\ref{w1_on_V0_w2_on_X1X2}) and 
(\ref{mapped_into_Z1}) are always satisfied. 

Finally, $\fnl{B}{i}{2}{1}(\z{i}{2})$ and $\fnl{C}{i}{1}{1}(\z{i}{1})$ 
can be chosen to satisfy (\ref{predeformability}) 
by Lemma \ref{lem_predeformability}. 

Since $\cX$ is a non-trivial family over $S_1$, 
the induced morphism $S_1\to\fM$ is non-constant. 
\end{proof}

\subsection{Conclusion}

For $d_1>1$, 
we will show that  $\fM$ 
is isomorphic to $S_{d_1-1}$. 
It is sufficient to show that 
there is a family over $S_{d_1-1}$ belonging to $\tilde{\fM}$ 
such that the tangent map of the induced morphism $S_{d_1-1}\to\fM$ 
is nonzero. 
We have already shown that there is a nontrivial first order deformation, 
so the problem is reduced to the following lemma. 

\begin{lemma}
If $1<n < d_1$ and a family in $\tilde{\fM}$ over $S_{n-1}$ is given, 
then it can be extended to a family over $S_n$. 
\end{lemma}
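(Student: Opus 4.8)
The plan is to extend the given $S_{n-1}$-family by choosing the order-$s^n$ coefficients $\fnl{A}{i}{j}{n}$, $\fnl{B}{i}{j}{n}$, $\fnl{C}{i}{j}{n}$ together with $\m{i}{n}$ and $t^{(n)}$ so that all seven conditions of Lemma~\ref{lem_conditions} hold modulo $s^{n+1}$; condition~(\ref{central_fiber}) is unaffected since only terms of order exactly $s^n$ are being added. The single coupling between the two components $\cX_1^\circ$ and $\cX_2^\circ$ is the requirement $\fnl{A}{1}{0}{n}=\fnl{A}{2}{0}{n}$ coming from Lemma~\ref{lem_w0_on_V0}, so I would first fix $\fnl{A}{1}{0}{n}$ and $\fnl{A}{2}{0}{n}$ to a common value (say $0$) and then work on each $i$ separately.

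First, apply Lemma~\ref{lem_w0w1_on_X1X2}(2) (valid since $n>1$): with $\fnl{A}{i}{0}{n}$ fixed it produces $\m{i}{n}$, $\fnl{A}{i}{1}{n}(\z{i}{1})$, $\fnl{B}{i}{0}{n}$, $\fnl{B}{i}{1}{n}(\z{i}{1})$ satisfying condition~(\ref{w0w1_on_X1X2}); this is the only step where anything substantial happens, namely the vanishing $H^1(\cE_i)=0$ which trivializes the relevant torsor, and it has already been proved. With $\m{i}{n}$ now determined and $\fnl{A}{1}{0}{n}=\fnl{A}{2}{0}{n}$, Lemma~\ref{lem_w0_on_V0} then gives unique $\fnl{A}{i}{2}{n}(\z{i}{2})$ with condition~(\ref{w0_on_V0}) satisfied; one should note that this does not disturb condition~(\ref{w0w1_on_X1X2}), because on $\cX_i^\circ$ one has $\z{i}{2}=\mu_i/\z{i}{1}$ and $s^n\mu_i=0$, so the newly chosen $\fnl{A}{i}{2}{n}$ contributes nothing there. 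Next, Lemma~\ref{lem_w2_on_V0} provides $\fnl{C}{i}{0}{n}$ and $\fnl{C}{i}{2}{n}(\z{i}{2})$, with a one-dimensional freedom which may be fixed arbitrarily, so that condition~(\ref{w2_on_V0}) holds.

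It then remains to deal with conditions~(\ref{w1_on_V0_w2_on_X1X2}), (\ref{mapped_into_Z1}) and~(\ref{predeformability}), and this is precisely where the hypothesis $n<d_1\leq d_2$ is used. On $\cV_0$ the coordinate $\z{i}{1}$ is a multiple of $\mu_i=O(s)$, so $\z{i}{1}^{d_i}\equiv 0$ modulo $s^{n+1}$, and similarly $\z{i}{2}^{d_i}\equiv 0$ on $\cX_i^\circ$; hence $\w{i}{1}$ vanishes on $\cV_0$ and $\w{i}{2}$ vanishes on $\cX_i^\circ$, so condition~(\ref{w1_on_V0_w2_on_X1X2}) holds automatically by Lemma~\ref{lem_w1_on_V0_w2_on_X1X2}. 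For the same reason $\w{i}{1}\w{i}{2}=\sigma_i\mu_i^{d_i}\fn{B}{i}\fn{C}{i}\equiv 0$ modulo $s^{n+1}$, so setting $t^{(n)}=0$ — consistently with the fact that $t=0$ is already forced on $S_{n-1}$ — makes condition~(\ref{mapped_into_Z1}) hold by Lemma~\ref{lem_mapped_into_Z1}. Finally, with $\m{i}{n}$, $\fnl{B}{i}{0}{n}$, $\fnl{C}{i}{0}{n}$, $\fnl{B}{i}{1}{n}(\z{i}{1})$ and $\fnl{C}{i}{2}{n}(\z{i}{2})$ all fixed, Lemma~\ref{lem_predeformability} determines $\fnl{B}{i}{2}{n}(\z{i}{2})$ and $\fnl{C}{i}{1}{n}(\z{i}{1})$ uniquely so that condition~(\ref{predeformability}) holds, and we obtain the desired family over $S_n$.

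The main thing to be careful about is the \emph{order} of the construction: each coefficient must be fixed before the lemma that reads it off, and one has to check — as indicated above — that no later lemma constrains a coefficient that an earlier lemma has already pinned down (the only potential clash, between $\fnl{A}{i}{2}{n}$ and condition~(\ref{w0w1_on_X1X2}), is harmless). There is no residual obstruction to surmount: the one cohomological input was already absorbed into Lemma~\ref{lem_w0w1_on_X1X2}, and everything else is bookkeeping of which coefficient each condition determines.
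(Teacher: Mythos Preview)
Your proof is correct and follows essentially the same approach as the paper. The paper's own proof is a two-line reference back to the $n=1$ construction, noting only that Lemma~\ref{lem_w0w1_on_X1X2}(2) now lets one prescribe $\fnl{A}{1}{0}{n}=\fnl{A}{2}{0}{n}$ freely so that Lemma~\ref{lem_w0_on_V0} applies; you have unpacked exactly this, with the added (and correct) observation that the later choice of $\fnl{A}{i}{2}{n}$ does not feed back into condition~(\ref{w0w1_on_X1X2}) because its contribution on $\cX_i^\circ$ is $O(s^{n+1})$.
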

\begin{proof}
Similar to the proof of the previous lemma. 
Just note that 
$\fnl{A}{1}{0}{n}$ and $\fnl{A}{2}{0}{n}$ can be 
arbitrarily chosen 
by Lemma \ref{lem_w0w1_on_X1X2}(2) 
and so 
Lemma \ref{lem_w0_on_V0} can be applied. 
\end{proof}

\section{Rational quartics with full tangency to a cubic}

As a check on our formula, 
let us look at 
rational plane curves of low degrees 
with full tangency to a smooth cubic. 

Let $Z$ be the projective plane $\PP^2$ 
and $D$ a smooth cubic. 
The genus $0$, degree $d$, full tangency 
relative Gromov-Witten invariants $I_d$ of $(Z, D)$ 
were calculated in \cite{G}. 
In small degrees, the invariants are 
$I_1=9$, $I_2=135/4$, $I_3=244$, 
$I_4=36999/16$.

Now let us describe the set of image curves 
of relative stable morphisms. 
Let $H$ denote a line in $\PP^2$. 

\begin{definition}
For a positive integer $d$, 
let $\bar{M}_d$ denote the image 
of the natural map 
from $\fM_{d[H]}$ to the Chow variety of $Z$. 
For $P\in D$, let 
\[
\bar{M}_{d, P}=\{C\in \bar{M}_d| C\cap D=\{P\}\}. 
\]
Denote by $M_d$ and $M_{d, P}$ 
the subsets of $\bar{M}_d$ and $\bar{M}_{d, P}$ 
consisting of irreducible and reduced curves. 
\end{definition}

In the following, 
let us denote the normalization of a variety $X$ by $X^\nu$. 
We choose a flex $O\in D$ and regard $D$ as a group 
with the zero element $O$. 

\begin{proposition}
(1)
The set $\bar{M}_d$ is the union of $\bar{M}_{d, P}$ 
where $P$ runs $3d$-torsion points of $D$. 

(2)
We have 
$\bar{M}_{d, P} = \{C_1+\dots+C_r| r\geq 1, d_1+\dots+d_r=d, C_i\in M_{d_i, P}\}$. 

(3)
An effective divisor $C$ of degree $d$ is contained in $M_d$ 
if and only if it is irreducible and reduced 
and $(C\setminus D)^\nu\cong\bA^1$. 
The normalization map $\PP^1\to Z$ 
gives the unique point in $\fM_{d[H]}$ over $C\in M_d$. 
\end{proposition}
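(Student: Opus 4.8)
The plan is to prove the three assertions in the order (3), (1), (2): assertion (3) records the shape of a single irreducible component, (1) follows by a group‑law computation on the elliptic curve $D$, and (2) — the substantive part — combines Corollary~\ref{cor_configuration}\,(1) with an explicit gluing construction. First note that any $[f]\in\fM_{d[H]}$, represented by $f\colon X\to Z[n]_0$, satisfies the hypothesis $\varphi(f(X))\not\supset D$ of Lemma~\ref{lem_configuration}: by predeformability no component of $X$ maps into $D[n]$ or into any $\bB_i$ (in particular not into $\bB_n\subset\Delta_{n+1}\cong Z$, which is $D$), and a component with $\lambda(v)\le n$ maps into a fibre of $\Delta_{\lambda(v)}\to D$ since $X$ is a tree of $\PP^1$'s and $D$ has genus $1$. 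Hence Lemma~\ref{lem_configuration} and Corollary~\ref{cor_configuration} apply (the hypothesis $D.C_i>0$ holds automatically, the $C_i$ being plane curves of positive degree). For (3): if the image cycle of $[f]$ is an irreducible reduced curve $C$ of degree $d$, then $(\varphi\circ f)_*X=C$ is reduced, so Corollary~\ref{cor_configuration}\,(2) gives $n=0$, $X\cong\PP^1$ and $f$ a normalization map of $C$; full tangency forces $f^{-1}(D)=3d\cdot Q$ for one point $Q$, so $(C\setminus D)^\nu\cong\PP^1\setminus\{Q\}\cong\bA^1$. Conversely, if $C$ is irreducible, reduced, of degree $d$ with $(C\setminus D)^\nu\cong\bA^1$, then $C^\nu$ is a smooth projective curve which becomes $\bA^1$ after deleting finitely many points, hence $C^\nu\cong\PP^1$ with $\nu^{-1}(D)$ a single point $Q$, at which $\nu^*D$ has multiplicity equal to its degree $3d$; the normalization $\nu\colon\PP^1\to Z$ is generically injective, so has no nontrivial automorphism, and is readily checked to be a relative stable morphism in $\fM_{d[H]}^\circ$ with image cycle $C$, so $C\in M_d$. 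Finally, any $[f]$ over $C$ is a normalization map by the above, and two normalization maps of $C$ differ by an automorphism of $\PP^1$ carrying $Q$ to $Q$, hence define the same point of $\fM_{d[H]}$; this is the uniqueness statement.

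For (1): given $[f]\in\fM_{d[H]}$ with image cycle $C$, Lemma~\ref{lem_configuration}\,(3) shows $C\cap D=\{P\}$. Writing $C$ as an effective plane divisor of degree $d$ with defining polynomial $g$, the restriction $g|_D$ is a section of $\cO_D(dH|_D)$, of degree $3d$, vanishing only at $P$, so $3d\cdot P\sim dH|_D$ on $D$. Since three collinear points of $D$ sum to the flex $O$, we have $H|_D\sim 3O$, whence $3d\cdot P\sim 3d\cdot O$, i.e.\ $P$ is a $3d$‑torsion point. As $\bar M_{d,P}\subset\bar M_d$ by definition, this gives $\bar M_d=\bigcup_P\bar M_{d,P}$ with $P$ running over the $3d$‑torsion points.

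For (2): for $\subseteq$, let $[f]\in\fM_{d[H]}$ have image cycle $C$ with $C\cap D=\{P\}$. By Corollary~\ref{cor_configuration}\,(1) each connected component $X_i$ of $\overline{X\setminus(\varphi\circ f)^{-1}(D)}$ is a $\PP^1$ with $(\varphi\circ f)|_{X_i}\in\fM_{\beta_i}^\circ$, hence factors through the normalization of an irreducible curve $C_i'$ as a degree‑$k_i$ cover $\PP^1\to (C_i')^\nu$, with $\beta_i=k_i\deg(C_i')[H]$; full tangency of $X_i$ forces the normalization of $C_i'$ to meet $D$ in a single point, so $C_i'\in M_{\deg(C_i'),P}$ by (3), and $(\varphi\circ f)_*X=\sum_i\beta_i=\sum_i k_i[C_i']$. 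Collecting the $k_i$ copies of each $C_i'$ writes $C$ in the desired form $C_1+\dots+C_r$ with $C_j\in M_{d_j,P}$ and $\sum d_j=d$. For $\supseteq$, given $C_1,\dots,C_r\in M_{d_j,P}$ with $\sum d_j=d$ I would build $[f]$ by hand: for $r=1$ take the normalization map of (3); for $r\ge 2$ take $n=1$, so $Z[1]_0=\Delta_1\cup\Delta_2$ with $\Delta_2\cong Z$, and let $X$ be the tree with central $X_0\cong\PP^1$ carrying $r$ distinct points $R_1,\dots,R_r$ and a point $Q$, with $X_j\cong\PP^1$ attached at $R_j$; map $X_j$ into $\Delta_2=Z$ by the normalization of $C_j$ (so $f|_{X_j}^*D=3d_j\cdot R_j$), and map $X_0$ onto the fibre $F$ of $\Delta_1\to D$ over $P$ by the degree‑$3d$ morphism with $f|_{X_0}^*(\bB_1\cap F)=\sum_j 3d_j\cdot R_j$ and $f|_{X_0}^*(D[1]\cap F)=3d\cdot Q$, which exists since these are effective divisors of equal degree $3d$, one of them concentrated at $Q$. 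The contact orders $3d_j$ agree on the two branches at each node $R_j$, so $f$ is predeformable; it is nondegenerate and stable, hence (via Proposition~\ref{prop_quotient}) defines a point of $\fM_{d[H]}$ whose image cycle is $\sum_j[C_j]=C_1+\dots+C_r$.

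The main obstacle is the $\supseteq$ direction of (2): one must produce the morphism to the expanded target with exactly the prescribed tangency orders at the boundary and at the nodes and check carefully that it is a relative stable morphism of type $\Gamma_{d[H]}$ — in particular that the ramification profile of $X_0\to F$ over the two sections can be prescribed as above and is compatible with Definition~\ref{def_graph2} and with predeformability. The remaining parts, (1), (3) and the $\subseteq$ direction of (2), are routine once Corollary~\ref{cor_configuration} and the group law on $D$ are available.
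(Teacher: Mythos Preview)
Your argument is correct and follows the paper's approach: Lemma~\ref{lem_configuration} and Corollary~\ref{cor_configuration} govern the shape of the image cycle, and the group law on $D$ handles (1). You are in fact more thorough than the paper on two points: you verify the hypothesis $\varphi(f(X))\not\supset D$ of Lemma~\ref{lem_configuration} (using that $D$ has genus $1$ while all components of $X$ are rational, which the paper leaves implicit), and you supply an explicit gluing construction on $Z[1]_0$ for the $\supseteq$ inclusion in (2), which the paper's proof omits entirely.
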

\begin{proof}
(1)
Let $C$ be an element of $\bar{M}_d$. 
By Lemma \ref{lem_configuration}(3), 
$({\rm Supp\ } C)\cap D$ consists of one point $P$. 
From $C|_D\sim dH|_D$, 
it follows that $P$ is a $3d$-torsion. 

(2)
Let $C$ be an element of $\bar{M}_d$. 
By Corollary \ref{cor_configuration}(1), 
we have morphisms $f_i: \PP^1\to Z$ 
such that $f_i^{-1}D$ consists of one point 
and $C=\sum e_iC_i$, 
where $C_i=f_i(\PP^1)$ and $e_i=\deg f_i$. 
Let $\nu_i: Y_i\to C_i$ be the normalization. 
Then $f_i$ factors through $Y_i$ 
and so $\nu_i^{-1}D$ consists of one point. 
Thus $\nu_i$ belongs to $\fM_{[C_i]}$ 
and $C_i$ is an element of $M_{\deg C_i}$. 

(3)
If $C\in M_d$ comes from 
a relative stable morphism $f$, 
then by Corollary \ref{cor_configuration}(2) 
the domain of $f$ is isomorphic to $\PP^1$, 
the target is $Z$ and $f$ is a normalization morphism for $C$. 
Since $f^{-1}(D)$ consists of one point, 
we have $(C\setminus D)^\nu\cong\bA^1$. 

Conversely, 
assume that $C$ is an irreducible and reduced curve of degree $d$ 
such that $(C\setminus D)^\nu\cong\bA^1$. 
Let $\nu: C^\nu\to C$ be the normalization map. 
Since $\nu^{-1}(C\setminus D)$ is isomorphic to $\bA^1$, 
$C^\nu$ is isomorphic to $\PP^1$ and $\nu^{-1}(D)$ 
consists of one point. 
Thus $\nu$ belongs to $\fM_{d[H]}$. 
\end{proof}

\begin{proposition}
(1)
If $P$ is a flex, 
$\bar{M}_{1, P}=M_{1, P}$ consists of a line. 

(2)
If $P$ is a flex, 
$\bar{M}_{2, P}$ consists of a double line. 
If $P$ is a $6$-torsion which is not a flex, 
$\bar{M}_{2, P}=M_{2, P}$ consists of a smooth conic. 

(3)
If $P$ is a flex, 
$\bar{M}_{3, P}$ consists of a triple line and $2$ nodal cubics (resp. $1$ cuspidal cubic) 
if $D\not\cong(y^2=x^3-1)$ (resp. $D\cong(y^2=x^3-1)$). 
If $P$ is a $9$-torsion which is not a flex, 
$\bar{M}_{3, P}=M_{3, P}$ consists of $3$ nodal cubics. 
\end{proposition}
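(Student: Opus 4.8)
The plan is to reduce everything to linear systems of plane curves having maximal contact with $D$ at $P$, together with two explicit computations in Weierstrass form. Fix a flex $O\in D$ as the origin of the group law. For a $3d$-torsion point $P$, the restriction sequence $0\to H^0(\cO_{\PP^2}(d-3))\to H^0(\cO_{\PP^2}(d))\to H^0(\cO_D(d))\to 0$ (exact since $H^1(\cO_{\PP^2}(d-3))=0$), combined with the fact that on the elliptic curve $D$ the bundle $\cO_D(d)\cong\cO_D(3dO)$ admits, uniquely up to scalar, a section with divisor $3dP$ precisely when $P$ is a $3d$-torsion point, shows that the degree-$d$ curves $C$ with $C.D=3dP$ form a linear system $\Lambda_{d,P}$ of dimension $(d-1)(d-2)/2$, with $[D]$ among its members once $d\ge 3$. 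By the previous Proposition, $\bar M_{d,P}$ is the set of sums $C_1+\dots+C_r$ with $C_i\in M_{d_i,P}$ and $\sum d_i=d$, and $M_{d,P}$ consists of those irreducible reduced rational members $C$ of $\Lambda_{d,P}$ whose singular point is not $P$: for such $C$ one has $C\cap D=\{P\}$, so $(C\setminus D)^\nu\cong\bA^1$ is equivalent to $C^\nu\cong\PP^1$ together with $\nu^{-1}(P)$ being a single point; this excludes a node at $P$, and a cusp at $P$ is ruled out by a local computation (for $d=3$: a smooth cubic cannot meet a cuspidal cubic only at the latter's cusp with multiplicity $9$). So it suffices to run through $d=1,2,3$.

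For $d=1$, $\dim\Lambda_{1,P}=0$ and the unique line is the tangent to $D$ at $P$ — which exists because a $3$-torsion point is a flex — whence $\bar M_{1,P}=M_{1,P}=\{L\}$. For $d=2$, $\dim\Lambda_{2,P}=0$, so there is a unique conic $C$ with $C.D=6P$. If $P$ is a flex, the double flex tangent $2L$ has $(2L).D=6P$, so $C=2L$ is non-reduced, $M_{2,P}=\emptyset$, and $\bar M_{2,P}=\{2L\}$ ($=L+L$, $L\in M_{1,P}$). If $P$ is a $6$-torsion that is not a flex, then no line meets $D$ only at $P$ (that would make $P$ a flex), so $C$ is neither a double line nor a line-pair; being an irreducible conic it is smooth, and $\bar M_{2,P}=M_{2,P}=\{C\}$.

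For $d=3$ with $P$ a flex, put $D$ in Weierstrass form $y^2z=x^3+axz^2+bz^3$ with $P=[0:1:0]$, so the flex tangent is $L=\{z=0\}$, $L.D=3P$, and $z^3$ restricts to $9P$ on $D$. Then $\Lambda_{3,P}$ is the pencil spanned by $z^3$ and the equation of $D$; its members are the triple line $3L$ and the Weierstrass cubics $C_c:\ y^2z=x^3+axz^2+cz^3$ for $c\in\CC$. Here $C_c$ is singular exactly when $4a^3+27c^2=0$, and its singular point then lies at a finite point, hence is $\ne P$, so every singular $C_c$ belongs to $M_{3,P}$: if $a\ne 0$ this occurs for two values of $c$, each giving a nodal cubic, and if $a=0$ it occurs only for $c=0$, with $C_0:\ y^2z=x^3$ cuspidal. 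As $a=0$ is precisely $j(D)=0$, i.e. $D\cong(y^2=x^3-1)$, and as $3L=L+L+L$ lies in $\bar M_{3,P}$ but not $M_{3,P}$ while $M_{2,P}=\emptyset$, the previous Proposition shows $\bar M_{3,P}$ is the triple line together with the one or two singular $C_c$, which is part (3) in the flex case.

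For $d=3$ with $P$ a $9$-torsion that is not a flex, no line meets $D$ only at $P$ and $P$ is not a $6$-torsion, so $M_{1,P}=M_{2,P}=\emptyset$ and $\bar M_{3,P}=M_{3,P}$. I would first exclude cuspidal members: the cubics fully tangent to a given cuspidal cubic at a point $P'$ form a nonempty linear system only when $P'$ is the flex of that cuspidal cubic (since ${\rm Pic}^0$ of a cuspidal cubic is torsion-free), and then a pencil all of whose smooth members have $j=0$; hence a cuspidal member of $\Lambda_{3,P}$ would force $j(D)=0$ and $P$ a flex of $D$, contrary to assumption. Thus every singular member of $\Lambda_{3,P}$, apart from the smooth member $D$ and a unique member $C_0$ singular at $P$ — necessarily an irreducible cubic nodal at $P$ — is a nodal cubic lying in $M_{3,P}$. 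To count these I would pass to the rational elliptic surface $S$ obtained by blowing up the length-$9$ curvilinear base scheme $9P|_D$ of $\Lambda_{3,P}$: then $e(S)=e(\PP^2)+9=12$, the member $D$ yields a smooth fiber of Euler number $0$, the chain of eight $(-2)$-curves over $P$ closes up via the strict transform of $C_0$ to a type $I_9$ fiber (a cycle of nine rational curves, Euler number $9$), and each member of $M_{3,P}$ yields a type $I_1$ fiber of Euler number $1$; hence $|M_{3,P}|=3$, all nodal. The main obstacle is precisely this last analysis of $\Lambda_{3,P}$ — identifying $C_0$ and its fiber type, and checking there are no further degenerations — whereas the rest is bookkeeping and the two Weierstrass computations; in both $d=3$ cases one must also note that every cubic with $C.D=9P$ really lies in the pencil described, which is immediate from $\dim\Lambda_{3,P}=1$.
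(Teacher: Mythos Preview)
Your argument is correct and, for $d=1,2$, identical to the paper's. For $d=3$ the paper also passes to the pencil $\Lambda$ and the associated rational elliptic surface, but it treats both the flex and non-flex cases uniformly by computing the Euler number of the fiber coming from the unique member $C_0$ singular at $P$ (namely $10$ in the flex case, $9$ in the non-flex case), deducing that the remaining singular fibers account for Euler number $2$ resp.\ $3$, and then deferring the exclusion of a cuspidal member to a cited reference. You instead handle the flex case by an explicit short-Weierstrass computation, which settles the nodal/cuspidal dichotomy directly via the discriminant $4a^3+27c^2$, and in the non-flex case you supply the missing ``detailed analysis'' yourself: the observation that $\mathrm{Pic}^0$ of a cuspidal cubic is torsion-free forces $P$ to be its unique flex, after which the pencil through $(C,3L_P)$ consists of $j=0$ curves with $P$ a flex, contradicting the hypothesis. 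Your identification of the $C_0$-fiber as $I_9$ is also sharper than the paper's bare Euler-number statement (and is justified, since an irreducible cubic nodal at $P$ with $(C_0.D)_P=9$ must have branch contacts $1$ and $8$ with $D$, the node forbidding both branches from being tangent to $D$). The upshot is that your proof is self-contained where the paper's outline is not; the paper's virtue is that its Euler-number bookkeeping is uniform across the two $d=3$ cases and closer in spirit to the $d=4$ analysis that follows.

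One small point worth making explicit: the existence (not just uniqueness) of $C_0$ follows because every member of $\Lambda_{3,P}$ smooth at $P$ is tangent to $D$ there, so the linear map $F\mapsto\nabla F(P)$ from the $2$-dimensional space underlying the pencil has image in a line and hence nontrivial kernel.
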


\begin{proof}(Outline) 
By the previous proposition, 
we have only to know $M_d$. 

If the degree $d$ is $1$ or $2$, 
there is a unique effective divisor $C$ such that $C|_D=3dP$, 
and the assertions are easy to see. 

If $d=3$, the set 
$\Lambda=\{\hbox{$C$: effective divisor with $C|_D=9P$}\}\cup\{D\}$ is a linear pencil, 
and we obtain a rational elliptic surface $f: F\to \PP^1$ 
by resolving the base points. 
There is a unique member $C_0$ of $\Lambda$ which is singular at $P$. 
If $P$ is a flex (resp. $P$ is not a flex), $C_0$ is a triple line 
(resp. an irreducible cubic with a node at $P$). 
The corresponding fiber of $f$ has Euler characteristic $10$ (resp. $9$), 
and it follows that $M_{3, P}$ has two nodal cubics or one cuspidal cubic 
(resp. three nodal cubics or 
one nodal cubic and one cuspidal cubic). 
A little more detailed analysis shows the assertions. 
See \cite[Propositions 1.4 and 1.5]{T} . 
\end{proof}

There are $9$ flexes on $D$, 
and so $\bar{M}_1=M_1$ consists of $9$ lines. 
This coincides with $I_1=9$. 

In degree $2$, 
there is a double line at each flex $P$, contributing $M_3[2]=3/4$. 
There are $27$ points $P$ with $3P\sim 3O$ and $6P\not\sim 6O$, 
and so $M_2$ consists of $27$ conics. 
They sum up to $9\times 3/4+27=135/4=I_2$. 

For cubics, 
we similarly have 
\[
9\times (M_3[3] + 2) + 72\times 3 = 244 = I_3. 
\]

\begin{proposition}
Let $D$ be a general cubic. 

(1)
If $P$ is a flex, 
$\bar{M}_{4, P}$ consists of 
\begin{itemize}
\item
$1$ quadruple line, 
\item
$2$ reducible curves of the form $L+C$ 
where $L$ is the tangent line at $P$ 
and $C$ is an element of $M_{3, P}$, 
and 
\item
eight immersed rational curves in $M_{4, P}$. 
\end{itemize}

(2)
If $P$ is a $6$-torsion which is not a flex, 
$\bar{M}_{4, P}$ consists of 
\begin{itemize}
\item
$1$ double conic and 
\item
$14$ immersed rational curves in $M_{4, P}$. 
\end{itemize}

(3)
If $P$ is a $12$-torsion which is not a $6$-torsion, 
$\bar{M}_{4, P}=M_{4, P}$ consists of 
$16$ immersed rational curves. 
\end{proposition}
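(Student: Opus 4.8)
The plan is to reduce the statement to a count of reduced irreducible rational quartics meeting $D$ only at $P$, and then to reassemble the relative Gromov--Witten invariant $I_4$ from Theorem~\ref{theorem_main} and the multiple-cover formula as a consistency check.

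First I would apply the two preceding propositions: $\bar{M}_{4,P}$ consists of all sums $C_1+\cdots+C_r$ with $\sum\deg C_i=4$ and $C_i\in M_{\deg C_i,P}$, and the sets $M_{1,P}$, $M_{2,P}$, $M_{3,P}$ are already determined---nonempty only when $P$ is a flex, a non-flex $6$-torsion point, or a $9$-torsion point, in which cases (for general $D$) they consist respectively of the tangent line $L$ at $P$, a unique smooth conic $Q$, and two nodal cubics. In the three cases of the statement $P$ is, respectively, a flex---hence also a $9$-torsion point, since $3\mid 9$---, a $6$-torsion non-flex point, and a $12$-torsion non-$6$-torsion point; in the last two cases $P$ is not a $9$-torsion point because $\gcd(9,6)=\gcd(9,12)=3$, and at a flex $M_{2,P}=\emptyset$. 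Hence the reducible and non-reduced members of $\bar{M}_{4,P}$ are forced: the quadruple line $4L$ and the two curves $L+C$ with $C\in M_{3,P}$ in case~(1), the double conic $2Q$ in case~(2), and none in case~(3). So the statement reduces to computing $|M_{4,P}|$ in each case and showing its members are immersed.

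For this I would argue as in the degree-$3$ case. By the exact sequence $0\to\mathcal{O}_{\PP^2}(1)\to\mathcal{O}_{\PP^2}(4)\to\mathcal{O}_D(4)\to 0$, the quartics $C$ with $C\cdot D=12P$ (which forces $P$ to be a $12$-torsion point), together with the curves $D+(\hbox{line})$, form a $3$-dimensional linear system $\mathcal{L}_P$. Resolving its base scheme---the length-$12$ cluster of points infinitely near $P$ along $D$---produces a smooth rational surface $S$ on which $\mathcal{L}_P$ becomes a linear system $|\widetilde{C}|$ with $\widetilde{C}^2=4$, $\widetilde{C}\cdot K_S=0$ and $p_a(\widetilde{C})=3$, while $-K_S=\widetilde{D}$, the proper transform of $D$, satisfies $\widetilde{D}^2=-3$ and is disjoint from a general member of $|\widetilde{C}|$. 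The elements of $M_{4,P}$ correspond to the reduced irreducible rational members of $|\widetilde{C}|$; I would count them and show that for general $D$ they are $3$-nodal---so their normalizations are immersions, with no cusp, tacnode or triple point---, obtaining $8$, $14$ and $16$ in the three cases. This classification, the technical core of the argument, would be carried out in the spirit of the degree-$3$ analysis recalled above and of \cite{T}.

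Finally I would reassemble $I_4$. Since $(K_{\PP^2}+D)\cdot C=0$ for every curve, Corollary~\ref{cor_configuration}(2),(5) together with the immersion hypothesis show that each element of $M_{4,P}$ gives a reduced isolated point of $\fM_{4[H]}$, contributing $1$ (the $d=1$ case of the multiple-cover formula); each quadruple line $4L$ contributes $M_3[4]=\tfrac{1}{16}\binom{7}{3}=\tfrac{35}{16}$; each double conic $2Q$ contributes $M_6[2]=\tfrac{1}{4}\binom{9}{1}=\tfrac{9}{4}$; and each $L+C$ with $C$ a nodal cubic contributes $\min\{D.L,D.C\}=\min\{3,9\}=3$ by Theorem~\ref{theorem_main}, once one checks, for general $D$, that $C$ is immersed and $(L.C)_P=3$. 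Summing over the $9$ flexes, the $27$ non-flex $6$-torsion points and the $108$ remaining $12$-torsion points yields $9\bigl(8+2\cdot 3+\tfrac{35}{16}\bigr)+27\bigl(14+\tfrac{9}{4}\bigr)+108\cdot 16=\tfrac{36999}{16}=I_4$. The main obstacle is the count of $M_{4,P}$: in degree $3$ the relevant system is a pencil and the surface is a rational elliptic surface, so an Euler-characteristic count suffices, whereas here $|\widetilde{C}|$ is $3$-dimensional and $S$ carries no evident elliptic fibration, so one must either construct an auxiliary fibration on $S$ or count the nodal members of $|\widetilde{C}|$ directly with the appropriate excess-intersection corrections, all while keeping careful track of the genericity hypotheses on $D$.
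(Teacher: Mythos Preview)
Your reduction of $\bar M_{4,P}$ to $M_{4,P}$ via the two earlier propositions is correct and matches the paper, as does your reassembly of $I_4$ at the end. But the heart of the proposition---the numbers $8$, $14$, $16$ and the immersedness---is left as an acknowledged gap, and the approach you sketch (analyzing the full $3$-dimensional linear system of quartics through $12P$ on a blow-up of $\PP^2$) is not the one the paper takes and would indeed be hard to execute directly, for exactly the reason you name: there is no obvious elliptic fibration.

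The paper's key idea, which your outline is missing, is to pass to the triple cover $\pi: Y\to\PP^2$ totally ramified along $D$; here $Y$ is a smooth cubic surface and $D_Y=(\pi^{-1}D)_{\rm red}$ a hyperplane section. One checks that $C\mapsto\pi(C)$ gives a $3$-to-$1$ map from the set $\tilde M_{4,P}$ of irreducible degree-$4$ curves on $Y$ with $(C\setminus D_Y)^\nu\cong\bA^1$ and $C\cap D_Y=\{P\}$ onto $M_{4,P}$. Choosing a blow-down $g:Y\to Z'\cong\PP^2$ of six disjoint lines, one classifies the possible divisor classes of such curves: up to a Cremona change of $g$, each is either a conic through two of the six blown-up points ($p_a=0$; $216$ ordered classes in all) or a cubic through five of them ($p_a=1$; $27$ classes). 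For a $p_a=0$ class the curve with $C|_{D_Y}=4P$ is unique; for a $p_a=1$ class one is back in a pencil, hence a rational elliptic surface, and an Euler-characteristic count gives $0$, $6$, or $8$ nodal members according as $P-P_i$ has order $1$, $2$, or $4$. A monodromy argument shows the count depends only on which $T_i$ contains $P$, and summing over classes and dividing by $3\cdot\#T_i$ yields $8$, $14$, $16$. Immersedness is proved separately by a finite case analysis of the possible non-nodal singular configurations (two nodes and a cusp; tacnode and cusp; a triple point with one cuspidal branch), using that $C_1\to\pi(C_1)$ is unramified off $D_Y$ and showing that each configuration imposes algebraic constraints on $D$ with only finitely many solutions. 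Thus the triple cover is precisely the ``auxiliary fibration'' device you were searching for: it breaks the $3$-dimensional system into many $0$- and $1$-dimensional pieces, each of which is handled by the degree-$3$ method you already know.
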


\begin{proof}
Let 
\begin{eqnarray*}
T_1 & = & \{P\in D| 3(P-O)\sim 0\}, \\ 
T_2 & = & \{P\in D| 3(P-O)\not\sim 0, 6(P-O)\sim 0\} \hbox{ and}\\ 
T_3 & = & \{P\in D| 6(P-O)\not\sim 0, 12(P-O)\sim 0\}. 
\end{eqnarray*}
We have only to show that $M_{4, P}$ contains 
$8$, $14$ or $16$ curves if $P\in T_1, T_2$ or $T_3$ respectively, 
and that they are immersed. 

We use the following cover to count the curves. 
\begin{lemma}
Let $\pi: Y\to Z$ be the triple cover 
totally ramified over $D$ and unramified elsewhere, 
and let $D_Y$ denote the divisor $(\pi^{-1}D)_{\rm red}$. 
Let $g: Y\to Z'$ be a blow-down to $Z'\cong \PP^2$, 
$D'$ the image $g(D_Y)$, 
$E_i$ ($i=1, \dots, 6$) the exceptional curves 
and $P_i=g(E_i)$. 

(1)
$Y$ is a smooth cubic surface 
and $D_Y$ is a smooth plane section. 
The curves $D$, $D_Y$ and $D'$ are isomorphic 
via the natural maps. 
(So we identify points on $D$, $D_Y$ and $D'$.)

(2)
For any positive integer $d$, 
let 
\[
\tilde{M}_{d, P}=
\{C\subset Y| (C\setminus D_Y)^\nu\cong\bA^1, \deg C=d, C\cap D_Y=P\}. 
\]
Then $C\mapsto \pi(C)$ defines 
a $3$-to-$1$ map $\tilde{M}_{d, P}\to M_{d, P}$, 
and the morphism $C\to \pi(C)$ is birational and unramified outside $D_Y$. 

(3)
The exceptional curves $E_i$ are lines 
and $\pi$ maps them isomorphically 
to flex tangent lines. 

Replacing $O$ by another flex and renumberin $P_i$ if necessary, 
there is an isomorphism $\theta: D\cong (\RR/\ZZ)^2$ of groups 
such that 
\begin{eqnarray*}
& & \theta(P_1) = (0, 0), \theta(P_2)=(1/3, 0), \theta(P_3)=(2/3, 0) \\
& & \theta(P_4) = (0, 1/3), \theta(P_5)=(1/3, 1/3), \theta(P_6)=(2/3, 1/3)
\end{eqnarray*}
and $\theta(O')=(1/9, 0)$ for a flex $O'\in D'$. 
\end{lemma}

\begin{proof}
(1)
If $D$ is defined by a homogeneous cubic polynomial 
$F(x, y, z)=0$, 
then $Y$ can be described as $F(x, y, z)=w^3$ 
and $D_Y$ as $w=0$. 
It is easy to see that $Y$ is smooth. 

Since $\pi$ is totally unramified over $D$, 
the morphism $D_Y\to D$ is an isomorphism. 
It is well known that 
a smooth plane section of a smooth cubic surface 
is mapped isomorphically to a smooth plane cubic 
by a blow-down to the projective plane. 

(2)
Let $C_1$ be an element of $\tilde{M}_{d, P}$ 
and $C=\pi(C_1)$. 
Then $C\to C_1$ is unramified outside $D_Y$, 
and it follows that $\bA^1\cong(C_1\setminus D_Y)^\nu\to (C\setminus D)^\nu$ 
is \'etale. 
Therefore it is an isomorphism, 
and $C$ belongs to $M_{d, P}$. 

Conversely, 
let $C$ be an element of $M_{d, P}$ 
and $\tilde{C}=\pi^{-1}(C)$. 
The morphism $\tilde{C}^\nu\to C^\nu$ is of degree $3$ 
and unramified except over one point. 
Thus it is a trivial $3$-sheeted cover, 
and $\tilde{C}$ is decomposed into $3$ components $C_1$, $C_2$ and $C_3$, 
each mapping birationally to $C$. 
Since $C_i$ are exchanged by the action of the Galois group, 
we have $\deg C_i=d$. 
Since $(C_i\setminus D_Y)^\nu$ maps isomorphically to $(C\setminus D)^\nu$, 
$C_i$ belongs to $\tilde{M}_{d, P}$. 

(3)
There are $9$ flex tangent lines to $D$ 
and each of them gives $3$ lines on $Y$. 
Thus all the $27$ lines, hence $E_i$'s, are obtained in this way. 

Let $O=P_1$ and 
choose a flex $O'$ of $D'$. 
Since $E_i$ are disjoint, $P_i$ are all different. 
They are flexes on $D$ 
and so there are $3$ remaining flexes, 
which we denote by $Q_1, Q_2$ and $Q_3$. 

Let $C_1, C_2$ and $C_3$ be lines on $Y$ through $Q_1$ 
and let $L_j=g(C_j)$. 
They are lines 
since they have positive degrees and 
the sum of degrees is $3$. 
Thus $L_j|_{D'}=Q_1+P_{a_j}+P_{b_j}$ 
for some $a_j\not=b_j$. 
We have $3(P_{a_j}+P_{b_j}-2O)\sim 0$ 
since $P_i$ are $3$-torsions, 
and the class of $P_{a_j}+P_{b_j}\sim H|_{D'}-Q_1$ 
is independent of $j$. 
Thus we have $\sum_{i=1}^3(P_{a_j}+P_{b_j})\sim 6O$. 
We also have $\{a_j, b_j\}\cap\{a_{j'}, b_{j'}\}=\emptyset$ if $j\not=j'$ 
since $L_j$ and $L_{j'}$ pass through $Q_1$. 
It follows that $\sum_{i=1}^6 P_i\sim 6O$ 
and $Q_1+Q_2+Q_3\sim 3O$. 
Therefore we can find $\theta: D\cong (\RR/\ZZ)^2$ 
such that 
$\{\theta(Q_1), \theta(Q_2), \theta(Q_3)\}=\{(p/3, 2/3)| p=0, 1, 2\}$. 
We can renumber $P_i$ as in the assertion. 

From $Q_1+P_{a_j}+P_{b_j}\sim 3O'$ 
it follows that $O'$ is a $9$-torsion. 
If it were a $3$-torsion, 
we would have $P_1+P_2+P_3\sim 3O'$. 
Thus $P_1$, $P_2$ and $P_3$ lie on a line in $Z'$, 
which contradicts the fact that $D_Y$ is ample. 
Consequently, $O'$ is of order $9$. 

Let $\theta_2$ denote the composition of $\theta$ 
and the second projection. 
From the fact that $\theta(P_{a_j})+\theta(P_{b_j})$ is 
independent of $j$, 
it follows that $\theta_2(P_{a_j})+\theta_2(P_{b_j})=1/3$. 
We know that $\theta_2(Q_1)=2/3$, 
so $\theta_2(O')$ is a $3$-torsion. 
By replacing $\theta$ and $O'$, 
we may assume that $\theta(O')=(1/9, 0)$. 
\end{proof}

\begin{lemma}\label{lem_divisor_class}
Let $C$ be an irreducible and reduced curve of degree $4$ on $Y$ 
and write $C\sim e\cdot g^*H-\sum a_iE_i$. 
Write $[a]$ for the unordered sequence $[a_1, \dots, a_6]$. 
Then the pair $(e, [a])$ 
is contained in the following list. 
The column ``$p_a$'' gives the arithmetic genera 
and ``$\#$'' the numbers of ordered sequences $(a_1, \dots, a_6)$. 
\begin{multicols}{3}
\begin{tabular}{|c|c|c|c|}
\hline
$e$ & $[a]$ & $p_a$ & $\#$ \\
\hline
$2$ & $[0,0,0,0,1,1]$ & $0$ & $15$ \\
\hline
$3$ & $[0,0,1,1,1,2]$ & $0$ & $60$ \\
 & $[0,1,1,1,1,1]$ & $1$ & $6$ \\
\hline
\end{tabular}
\columnbreak

\begin{tabular}{|c|c|c|c|}
\hline
$e$ & $[a]$ & $p_a$ & $\#$ \\
\hline
$4$ & $[0,1,1,2,2,2]$ & $0$ & $60$ \\
 & $[1,1,1,1,1,3]$ & $0$ & $6$ \\
 & $[1,1,1,1,2,2]$ & $1$ & $15$ \\
\hline
\end{tabular}
\columnbreak

\begin{tabular}{|c|c|c|c|}
\hline
$e$ & $[a]$ & $p_a$ & $\#$ \\
\hline
$5$ & $[1,1,2,2,2,3]$ & $0$ & $60$ \\
 & $[1,2,2,2,2,2]$ & $1$ & $6$ \\
\hline
$6$ & $[2,2,2,2,3,3]$ & $0$ & $15$ \\
\hline
\end{tabular}
\end{multicols}

If $p_a=0$ (resp. $p_a=1$), 
there exists a blowdown $g': Y\to \PP^2$ 
of lines $E'_1, \dots, E'_6$ 
such that $C\sim 2(g')^*H-E'_{i_1}-E'_{i_2}$ 
(resp. $C\sim 3(g')^*H - E'_{i_1}-\dots-E'_{i_5}$). 
\end{lemma}

\begin{proof}
Since $C$ is not a line, 
we have $0\leq a_i=C.E_i\leq C.\pi^*\pi_*E_i=4$. 
We have 
\begin{equation}\label{eqn_3e}
3e=\pi_*C.D'=C.(D_Y+\sum E_i)=4+\sum a_i 
\end{equation}
and in particular $2\leq e\leq 9$. 
Solving (e.g. by an exhaustive search) 
\[
p_a(C) = \frac{(e-1)(e-2)}{2} - \sum \frac{a_i(a_i-1)}{2} \geq 0 
\]
under the conditions $0\leq a_i\leq 4$, $2\leq e\leq 9$ and (\ref{eqn_3e}), 
we obtain the list. 

If $p_a=0$ and $e>2$ or $p_a=1$ and $e>3$, 
one can find pairwise distinct $i, j$ and $k$ such that $a_i+a_j+a_k>e$. 
By replacing $Z'$ by its quadratic transform 
with fundamental points $P_i, P_j$ and $P_k$, 
we can make $e$ smaller, 
hence the second assertion. 
\end{proof}

\begin{lemma}\label{lem_immersed}
If $D$ is a general cubic and $C$ is an element of $M_{4, P}$, 
then $C$ is immersed. 
\end{lemma}
\begin{proof}
Since $P\in C$ has only one analytic branch, 
it must be either a smooth point, an ordinary cusp 
or a cusp of type $\eta^3=\xi^4$. 
From $(C.D)_P=12$, 
it follows that $C$ is smooth at $P$. 

If $D'$ is another smooth cubic such that $(C.D')_P=12$, 
then $(D.D')_P\geq 12$ and so $D=D'$. 
Thus it suffices to show that 
there are finite possibilities (up to projective equivalence) 
for non-immersed quartics $C$ and smooth points $P\in C$ such that 
there exists a cubic $D$ smooth at $P$ with $(C.D)_P=12$. 

We have $C=\pi(C_1)$ for some $C_1\in \tilde{M}_{4, P}$ 
and $C_1$ is mapped to a conic or a cubic by a blowdown $Y\to\PP^2$. 
Since $\pi|_{C_1}$ is unramified outside $D$, 
the possibilities are as follows. 
\begin{enumerate}
\item\label{two_nodes}
$C$ has $2$ nodes and $1$ cusp. 
\item\label{tacnode}
$C$ has $1$ tacnode and $1$ cusp. 
\item\label{triple_point}
$C$ has a singular point with one smooth branch 
and one cuspidal branch. 
\end{enumerate}

Let $\nu: \PP^1\to C$ be the normalization. 
Let $(s: t)$ be a homogeneous coordinate on $\PP^1$, 
and for $x\in\CC\cup\{\infty\}$ 
denote by $[x]$ the point $(s: t)$ with $t/s=x$. 
For $a, b\in\CC\cup\{\infty\}$, 
write $P_{a, b}$ for the nodal rational curve 
obtained by gluing $[a]$ and $[b]$, 
$P_a$ for the curve obtained by making $[a]$ a cusp. 
Write $[x]$ also for the image of $[x]\in\PP^1$ 
in $P_a$ etc.

In case (\ref{two_nodes}), 
write $Q$ and $R$ for the nodes and $S$ for the cusp. 
We may assume that 
$\nu^{-1}(Q)=\{0, 1\}$, 
$\nu^{-1}(R)=\{a, b\}$ and 
$\nu^{-1}(S)=\{\infty\}$. 
Let $p$ correspond to $P$. 
Denote by $L_0$, $L_1$ and $L_2$ 
the lines $RS$, $SQ$ and $QR$. 
Considering the pullbacks of $L_i$, 
we see that $\nu$ is determined to be 
$(s^2(t-as)(t-bs): s^2t(t-s): t(t-1)(t-as)(t-bs))$ 
up to projective equivalence. 

From $D\sim 3L_2$, 
we have $12[p]\sim 3[0]+3[1]+3[a]+3[b]$ on $P_\infty$. 
Similarly, 
we have $12[p]\sim 3[a]+3[b]+6[\infty]$ on $P_{0, 1}$ 
fand 
$12[p]\sim 3[0]+3[1]+6[\infty]$ on $P_{a, b}$. 
Hence we have
\begin{eqnarray*}
4p & = & a+b+1, \\
\left(\frac{p-1}{p}\right)^{12} & = & \left(\frac{(a-1)(b-1)}{ab}\right)^3, \\
\left(\frac{p-b}{p-a}\right)^{12} & = & \left(\frac{b(b-1)}{a(a-1)}\right)^3. 
\end{eqnarray*}

Eliminating $p$, clearing fractions 
and calculating the greatest common divisor of the resulting 
polynomials, 
we see that the solutions are finite outside 
\[
\left\{(p, a, b)\left| 
4p=a+b+1, 
\left(\frac{p-1}{p}\right)^4=\frac{(a-1)(b-1)}{ab}, 
\left(\frac{p-b}{p-a}\right)^4=\frac{b(b-1)}{a(a-1)}
\right.\right\}. 
\]
Take $(p, a, b)$ from this set. 
We assume that 
it gives a curve $C$ satisfying the conditions 
and draw a contradiction. 
Let $H$ be a general line 
and $\mathfrak{h}=H|_C$. 
Then $4[p]$ is linearly equivalent to the pullbacks of $\mathfrak{h}$ 
on $P_\infty$, $P_{0, 1}$ and $P_{a, b}$. 
Thus there are rational functions $f_1, f_2$ and $f_3$ 
which are regular and nonzero at 
$Q$, $R$ and $S$, respectively, 
such that $(f_i)=4[p]-\nu^*\mathfrak{h}$ on $\PP^1$. 
They differ only by nonzero constant factors, 
so we conclude that $4[p]\sim\mathfrak{h}$. 
Since $H^0(\cO_Z(H))\to H^0(\cO_C(\mathfrak{h}))$ is surjective, 
there is a line $L$ with $L|_C=4P$. 
Then we would have $(L.D)_P\geq 4$, a contradiction. 

In case (\ref{tacnode}), 
we may assume that 
$\nu^{-1}(\hbox{tacnode})=\{0, 1\}$ and 
$\nu^{-1}(\hbox{cusp})=\{\infty\}$. 
Let $L_0$ be the tangent line at the cusp, 
$L_1$ the line joining the cusp and the tacnode 
and $L_2$ the tangent line at the tacnode. 
Let $a\in\PP^1$ correspond to the fourth intersection 
of $L_0$ and $C$, 
and $p$ to $P$. 
Then $\nu$ is determined to be 
$(s^3(t-as): s^2t(t-s): t^2(t-s)^2)$ up to projective equivalence. 

From $D\sim 3L_2$, 
we have $12[p]\sim 6[0]+6[1]$ on $P_\infty$, 
hence $12p=6$. 
From $D\sim 3L_0$, 
we have $12[p]\sim 3[a]+9[\infty]$ on $P_{0, 1}$, 
hence $((p-1)/p)^{12}=((a-1)/a)^3$. 
So there are finite possibilities for $a$ and $p$. 

In case (\ref{triple_point}), 
we may assume that 
$0$ and $\infty$ correspond to 
the smooth and cuspidal branches, respectively. 
Let $L_0$ be the tangent line to the cuspidal branch 
and $L_1$ the tangent line to the smooth branch. 

The dual curve of any quartic curve 
must be singular, since the dual of a smooth curve 
cannot be of degree $4$. 
This means that $C$ has a tangent line $L_2$ 
such that $\nu^*L_2=k[a]+(4-k)[b]$ with $k\leq 2$ and $a\not=b$. 
It is clear that $L_2$ does not pass through the singular point, 
and so we may assume that $b=1$ and $a\not=0, 1, \infty$. 
Now $\nu$ is determined to be 
$(s^3t: s^2t^2: (t-as)^k(t-s)^{4-k})$ up to projective equivalence 
(and $a$ is irrelevant if $k=0$). 

From $D\sim 3L_2$, 
we have $12[p]\sim 3k[a]+3(4-k)[1]$ on $P_\infty$ and $P_{0, \infty}$, 
hence $12p=3ka+3(4-k)$ and $p^{12}=a^{3k}$. 
So there are finite possibilities for $p$, 
and $a$ is determined by $p$ if $k\not=0$. 
\end{proof}

\begin{lemma}
Let $D$ be general. 
Then $\#M_{4, P}$ depends only on 
the index $i$ such that $P$ belongs to $T_i$. 
\end{lemma}
\begin{proof}
In fact, if $P$ and $P'$ belong to the same $T_i$, 
they can be related by projective equivalence and 
monodromy. 
\end{proof}

Thus we have only to calculate 
$N_i=\sum_{P\in T_i}\#\tilde{M}_{4, P}$

\begin{lemma}
Let $A=e\cdot g^*H-\sum a_iE_i$, 
where $(e, [a])$ is in the list of Lemma \ref{lem_divisor_class}. 

(1)
If $p_a(A)=0$, 
$|A|$ has $1$ curve contributing to $N_1$, 
$3$ curves contributing to $N_2$ 
and $12$ curves contributing to $N_3$. 

(2)
If $p_a(A)=1$, 
$|A|$ has no curve contributing to $N_1$, 
$6\times 3$ curves contributing to $N_2$ 
and $8\times 12$ curves contributing to $N_3$. 
\end{lemma}
\begin{proof}
It suffices to consider the cases $(e, p_a)=(2, 0)$ and $(e, p_a)=(3, 1)$. 

(1)
Write $A=2g^*H-E_i-E_j$ 
and let $T=\{P; 4P\sim A|_{D_Y}\}$. 
An element of $T$ is a $12$-torsion 
since $A|_{D_Y}\sim 6O'-P_i-P_j$, 
and one can find a $3$-torsion in $T$ 
by adding a $4$-torsion. 
Thus $T$ contains $1$ element of $T_1$, 
$3$ elements of $T_2$ and $12$ elements of $T_3$. 

Let $P$ be any element of $T$. 
Then there is a unique effective divisor $C\subset Z'$ of degree $2$ 
such that $C|_{D'}=P_i+P_j+4P$. 
If $C$ were reducible or non-reduced, 
we would have $3P\sim P_i+P_j+P\sim 3O'$ 
or $P_i+2P\sim P_j+2P\sim 3O'$. 
In the former case, one has 
$P\sim 3O'-P_i-P_j$ and 
$3O'\sim 3P\sim 9O'-3P_i-3P_i\sim 3O$, 
which is a contradiction. 
The latter is also impossible because $P_i\not=P_j$. 
Thus $C$ is irreducible and reduced, 
and contributes to $\tilde{M}_{4, P}$. 

(2)
Write $A=3g^*H-E_1-\dots-\check{E}_i-\dots-E_6$ 
and let $T=\{P; 4P\sim A|_{D_Y}\}$. 
Then $T$ contains $P_i$, 
and so $T$ is the set of points $P$ 
such that $P-P_i$ is a $4$-torsion. 

Let $\Lambda=\{C\subset Z'; C|_{D'}=4P+P_1+\dots+\check{P}_i+\dots+P_6\}\cup\{D'\}$. 
This is a linear pencil. 
If $C\in |A|$, 
then $g_*C\in\Lambda$ 
and $C=g^*g_*C-E_1-\dots-\check{E}_i-\dots-E_6$. 

If $P=P_i$, 
then any $C\in\Lambda$ passes through $P_i$ 
and the corresponding curve $g^*C-E_1-\dots-\check{E}_i-\dots-E_6\in|A|$ 
is reducible. 
Thus there is no contribution to $\tilde{M}_{4, P}$.

\begin{claim}
If $P-P_i$ is of order $2$, 
$\Lambda$ contains exactly one element 
which is not irreducible and reduced, 
and it is of the form $C_1+C_2$ 
with $C_1|_{D'}=2P+P_{i_1}$ 
and $C_2|_{D'}=2P+P_{i_2}+\dots+P_{i_5}$. 

If $P-P_i$ is of order $4$, 
$\Lambda$ contains 
no reducible or non-reduced element. 
\end{claim}
\begin{proof}
Assume that $C=C_1+C_2$ with $\deg C_i=i$. 
Since no three of $P_i$'s lie on a line in $Z'$, 
$k=(D'.C_1)_P$ is not zero, 
and $kP+\sum_{j=1}^{3-k}P_{i_j}\sim 3O'$ holds. 

Multiplying by $3$, 
we have $3k(P-P_i)\sim 0$, 
and therefore $k=2$ and $P-P_i$ is of order $2$. 
If $P-P_i$ is of order $2$, 
then we have $2P+P_{i_1}\sim 3O'$ 
for the index $i_1$ 
such that $P_{i_1}\sim P_i+P_2-O$. 
\end{proof}

There is a unique element $C_0$ of $\Lambda$ 
which is singular at $P$, 
since being singular at $P$ is a linear condition. 
If $P-P_i$ is of order $2$, 
this is $C_1+C_2$ above. 
If $P-P_i$ is of order $4$, 
then $C_0$ is irreducible, 
and from $(C_0.D')_P=4$ 
it follows that 
the singularity at $P$ is a node 
whose branches intersect $D'$ with multiplicities $1$ and $3$. 

Let $F$ be the surface obtained by 
blowing up $P_1, \dots, \check{P}_i, \dots, P_6$ 
and the $4$ consecutive infinitely near points on $D'$ 
over $P$. 
This is a resolution of base points of $\Lambda$, 
and we have an elliptic fibration $f: F\to \PP^1$. 
The Euler characteristic of the fiber corresponding to $C_0$ 
is $6$ or $4$ if the order of $P-P_i$ is $2$ or $4$, respectively. 

The other singular fibers of $f$ 
correspond to members of $\Lambda$ 
which are singular on $Z'\setminus D'$. 
They are nodal by Lemma \ref{lem_immersed}. 
Since the Euler characteristic of $F$ is $12$, 
there are $6$ or $8$ such curves 
if the order of $P-P_i$ is $2$ or $4$, respectively. 
They are smooth at $P_1, \dots, \check{P_i}, \dots, P_6$ 
and $P$, 
and so they give elements of $\tilde{M}_{4, P}$. 

Since there are $3$ points of order $2$ 
and $12$ points of order $4$, 
the assertion was proved. 
\end{proof}

The list has $216$ divisor classes with $p_a=0$ 
and $27$ classes with $p_a=1$, and so we have 
\begin{eqnarray*}
N_1 & = & 216 \times 1 + 27\times 0, \\
N_2 & = & 216 \times 3 \times 1 + 27 \times 3 \times 6, \hbox{ and} \\
N_3 & = & 216 \times 12 \times 1 + 27 \times 12 \times 8. 
\end{eqnarray*}
Since $\tilde{M}_{4, P}\to M_{4, P}$ is $3$-to-$1$ 
and curves are evenly distributed in $T_i$, 
we have 
$\# M_{4, P}=N_i/(3\times\# T_i)$, 
and it is calculated to be $8$, $14$ or $16$ 
if $i=1, 2$ or $3$. 
\end{proof}

By our main theorem, 
$(\hbox{line})+(\hbox{cubic})$ contributes $3$. 
Thus the relative Gromov-Witten invariant should be 
\[
9\times (M_3[4] + 2\times 3 + 8) 
+ 27\times (M_6[2] + 14)
+ 108\times 16
= \frac{36999}{4},
\]
which is surely equal to $I_4$.

\begin{remark}
Let $L$ denote the line bundle over $\PP^2$ 
associated to $\cO_{\PP^2}(-3)$, 
and let $K_n$ denote the genus $0$, degree $n$ 
local Gromov-Witten invariant of $L$. 
By \cite{CKYZ} and \cite{G}, 
$I_n = (-1)^{n-1}3n K_n$ holds. 
The multiple cover formula for $L$ 
suggests one to consider 
$M'_n[d]=(-1)^{n(d-1)}/d^2$ 
as the contribution of $d$-fold covers of 
degree $n$ curves. 

One can define $m_w[d]$ by 
$M_w[d]=\sum_{d=d_1d_2} M'_{d_1w}[d_2]m_w[d_1]$. 
(Note that $M'_n[d]=M'_{3n}[d]$.) 
For small values of $w$ and $d$
one observes that $m_w[d]$ are positive integers. 
We may view this in the following way: 
If $C$ is an immersed rational curve of degree $n$ with full tangency, 
it gives $m_{C.D}[d]$ ``instantons'' in the class $d[C]$. 
Then the contribution of ``$d$-fold covers'' of each instanton of class $\beta$ to 
the relative Gromov-Witten invariant is $M'_{D.\beta}[d]$. 

Remarkably, it seems that 
all $3d$-torsion points have the same number of instantons. 
For example, 
we have $1\times m_3[4] + 2\times 3 + 8=16$ instantons 
at $P\in T_1$ 
and $1\times m_6[2] + 14 = 16$ instantons 
at $P\in T_2$. 
\end{remark}

\end{document}